\providecommand{\keywords}[1]{\textbf{\textit{Key words and phrases }} #1}
\providecommand{\subjclass}[1]{\textbf{\textit{2010 Mathematics Subject Classification.}} #1}
\theoremstyle{definition}
\newtheorem{theo}{Theorem}[subsection]
\newtheorem{theore}{Theorem}[section]
\newtheorem{pr}[theo]{Proposition}
 \newtheorem{lem}[theo]{Lemma}
 \newtheorem{llem}[theore]{Lemma}
 \newtheorem{coro}[theo]{Corollary}
\theoremstyle{remark}
\newtheorem{rema}[theo]{Remark}
\newtheorem{rrema}[theore]{Remark}
\theoremstyle{definition}
\newtheorem{defi}[theo]{Definition}
\newtheorem{prop}[theore]{Proposition}
\numberwithin{equation}{subsection}
\newcommand\cu{\underline{C}}
\newcommand\cupr{\underline{C}'}
\newcommand\cuz{\underline{C}_0}
\newcommand\du{\underline{D}}
\newcommand\eu{\underline{E}}
\newcommand\au{\underline{A}}
\newcommand\bu{\underline{B}}
\newcommand\hu{\underline{H}}
\newcommand\obj{\operatorname{Obj}}
\newcommand\mo{\operatorname{Mor}}
\newcommand\id{\operatorname{id}}
\DeclareMathOperator\adfu{\operatorname{AddFun}}
\DeclareMathOperator\kar{\operatorname{Kar}}
 \DeclareMathOperator\ke{\operatorname{Ker}}
 \DeclareMathOperator\cok{\operatorname{Coker}}
\DeclareMathOperator\imm{\operatorname{Im}}
\DeclareMathOperator\co{\operatorname{Cone}}
\DeclareMathOperator\prli{\varprojlim}
\DeclareMathOperator\inli{\varinjlim}
\DeclareMathOperator\hinli{\underrightarrow{\operatorname{hocolim}}}
\newcommand\hw{{\underline{Hw}}}
\newcommand\hrt{{\underline{Ht}}}
\newcommand\alz{{\aleph_0}}
\newcommand\cocp{\underline{\coprod}\cp}
\newcommand\cocpp{{\underline{\coprod}\cp'}}
\newcommand\hatc{\operatorname{Coh}_{\cp'}}
\newcommand\lo{\mathcal{LO}}
\newcommand\ro{\mathcal{RO}}
\newcommand\hdu{\widehat{H}}
\newcommand\wstu{w^{st}}
\newcommand\modd{\operatorname{Mod}}
\newcommand\q{{\mathbb{Q}}}
\newcommand\z{{\mathbb{Z}}}
 \newcommand\lan{\langle}
\newcommand\ra{\rangle}
\newcommand\ob{^{-1}}
\newcommand\al{\alpha}
\newcommand\be{\beta}
\newcommand\ns{\{0\}}
\newcommand\ab{\operatorname{Ab}}
\newcommand\cq{\mathcal{Q}}
\newcommand\ooo{\mathcal{O}}
\newcommand\cp{\mathcal{P}}
\newcommand\cpt{{\tilde{\mathcal{P}}}}
\newcommand\cpn{\mathcal{P}-\mathbf{null}}
\newcommand\cpcn{\mathcal{P}'-\mathbf{conull}}
\newcommand\perpp{{}^{\perp}}
\newcommand\opp{^{op}}
\newcommand\lscp{L_s\mathcal{P}}
\begin{document}

\title{On perfectly generated weight structures and  adjacent $t$-structures}
\author{Mikhail V. Bondarko
   \thanks{ 
 \S\S\ref{sts}--\ref{swgs}  of paper were   supported by the Russian Science Foundation grant no. 16-11-00073
.  Results of section \S\ref{sdeg}  were supported the Russian Science Foundation grant no.  20-41-04401.}}\maketitle
\begin{abstract} 
This paper is dedicated to the study of {\it smashing} weight structures 
(these are the weight structures "coherent with coproducts"), and the application of their properties to $t$-structures. In particular, we prove that hearts of compactly generated $t$-structures are Grothendieck abelian categories; this statement strengthens earlier results of several other authors.

The central theorem of the paper is as follows: any perfect (as defined by Neeman) set of objects of a triangulated category generates a weight structure; we say that weight structures obtained this way are  perfectly generated. An important family of perfectly generated weight structures are (the opposites to) the ones {\it right adjacent} to compactly generated $t$-structures; they give injective cogenerators for the hearts of the latter. Moreover, we establish the following not so explicit result: any smashing weight structure on a well generated triangulated category (this is a generalization of the notion of a compactly generated category that was also defined by Neeman) is perfectly  generated; actually, we  prove more than that.

Furthermore, we give a classification of compactly generated {\it torsion theories} (these generalize both weight structures and $t$-structures) that extends the corresponding result of D. Pospisil  and J.  \v{S}\v{t}ov\'\i\v{c}ek to arbitrary smashing triangulated categories. This gives a generalization of a $t$-structure statement due to B. Keller and P. Nicolas.

\end{abstract}
\subjclass{Primary 18E30;  Secondary 18E40, 18F20, 18G05, 18E10,  18E15.} 

\keywords{Triangulated category, weight structure, $t$-structure, heart, Grothendieck abelian category, compact object, perfect class, Brown representability, torsion theory.}

\tableofcontents
 \section*{Introduction}
The main subject of this paper are smashing weight structures and the application of their properties to the study of $t$-structures.

We recall that weight structures are defined somewhat similarly to $t$-structures; yet their properties are quite distinct. A weight structure on a triangulated category $\cu$ is a couple $w=(\cu_{w\le 0},\cu_{w\ge 0})$ of classes of its objects, subject to certain axioms. One says that $w$ is smashing if  $\cu$ is (that is, $\cu$ is closed with respect to small coproducts) and $\cu_{w\ge 0}$ is closed with respect to $\cu$-coproducts; note that $\cu_{w\le 0}$ is closed with respect to $\cu$-coproducts automatically.

Let us adopt the following convention: for 
 $S\subset \obj\cu$ we will write $S\perpp$ (resp. $\perpp S$) for the class of those $M\in \obj \cu$ such that the morphism group $\cu(N,M)$ (resp. $\cu(M,N)$) is zero   for all $N\in S$. Then the main existence of weight structures result of this paper can be formulated as follows. 

\begin{theore}\label{textw}
Assume that $\cu$ 
 smashing; let $\cp$ be a {\it perfect} set of objects of $\cu$ (i.e., 
 $\cpn$ is closed with respect to  coproducts, where $\cpn$ is  the class 
of  $h\in \mo(\cu)$ 
 such that $\cu(P,h)=0$ for all $P\in \cp$).

Then 
$w=(L,R)$ is a  smashing weight structure, 
 where $R=\cap_{i<0}(\cp\perpp[i])$ and $L=(\perpp R)[1]$. 

Moreover,  the class $L$ may be described "more explicitly" in terms of $\cp$; cf. Theorem \ref{tpgws} 
  below. \end{theore}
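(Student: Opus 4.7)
The plan is to verify the formal weight-structure axioms directly and then to produce weight decompositions via an inductive cellular construction whose behavior at the homotopy colimit is controlled by the perfectness of $\cp$.

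\emph{Formal axioms.} As an intersection of right-perpendicular classes of the form $\cp^\perp[i]$, the class $R$ is Karoubian and closed under coproducts and extensions. A direct reindexing of the defining intersection yields the shift-stability $R[1]\subseteq R$. Correspondingly, $L=(\perpp R)[1]=\perpp(R[1])$ is Karoubian, closed under extensions and coproducts, and satisfies $L[-1]\subseteq L$ (because $R[1]\subseteq R$ forces $\perpp R\subseteq \perpp(R[1])=L$, and $L[-1]=\perpp R$). The orthogonality $\cu(L,R[1])=0$ is tautological from the definition of $L$.

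\emph{Cellular construction.} Fix $M\in\obj\cu$. I would inductively define a tower $M=M_0\xrightarrow{f_0}M_1\xrightarrow{f_1}M_2\to\cdots$ by taking $M_{n+1}:=\co(X_n\to M_n)$, where
\[ X_n := \coprod_{P\in\cp,\ j<0,\ \phi\in\cu(P[j],M_n)} P[j] \]
is a set-indexed coproduct equipped with the tautological map $X_n\to M_n$ (one copy of $P[j]$ for each morphism $\phi$). Put $B:=\hinli_n M_n$ and $A:=\co(M\to B)[-1]$, giving a candidate triangle $A\to M\to B$. By construction, every morphism $\phi\colon P[j]\to M_n$ with $j<0$ factors through $X_n\to M_n$ and hence becomes zero after composing with $f_n$; thus each $f_n$ is $\cp[j]$-null (i.e., $\cu(\cp[j],f_n)=0$) for every $j<0$.

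\emph{Perfectness at the homotopy colimit.} The perfectness of $\cp$ is inherited by each shift $\cp[j]$, so the coproduct morphism $\sigma:=\coprod_n f_n\colon\coprod_n M_n\to\coprod_n M_n$ --- with $f_n$ placed into the $(n+1)$st summand of the codomain --- is itself $\cp[j]$-null for every $j<0$. Applying $\cu(P[j],-)$ to the telescope triangle $\coprod_n M_n\xrightarrow{1-\sigma}\coprod_n M_n\to B$ and using $\sigma_*=0$, the induced map $(1-\sigma)_*$ is the identity; the associated long exact sequence then forces $\cu(P[j],B)=0$ for all $P\in\cp$ and $j<0$, i.e., $B\in R$.

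\emph{Identification of $A$ and main obstacle.} The cone $\co(M\to B)\cong\hinli_n\co(M\to M_n)$ is built by iterated extensions and homotopy colimits from the $X_i[1]$, which are coproducts of shifts $\cp[k]$ with $k\le 0$. Each such $\cp[k]$ lies in $\perpp(R[1])=L$ (since $\cp[k-1]\in\perpp R$ by the definition of $R$ for $k-1<0$, equivalently $\cu(\cp[k],R[1])=0$). A long-exact-sequence computation applied to the telescope defining $\hinli_n\co(M\to M_n)$, using the stronger vanishing $\cp[k]\in\perpp(R[1])\cap\perpp(R[2])$ for $k\le 0$, then gives $\co(M\to B)\in\perpp(R[2])$, whence $A\in L$. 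The most delicate step --- and the essential use of perfectness --- is the passage ``each $f_n$ is $\cp[j]$-null'' $\Rightarrow$ ``$\sigma$ is $\cp[j]$-null'': without this, the telescope computation at $B$ collapses and $B\in R$ cannot be concluded. The explicit description of $L$ promised in Theorem~\ref{tpgws} is expected to streamline the verification $A\in L$ as well.
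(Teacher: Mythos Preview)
There is an index slip that turns into a real gap. A weight decomposition for $w=(L,R)$ requires $B\in\cu_{w\ge 0}[1]=R[1]$, not merely $B\in R$ (see axiom~(iv) of Definition~\ref{dwstr}). Your tower with $j<0$ only kills $\cu(P[j],-)$ for $j<0$ and hence yields $B\in R$ but misses the case $j=0$. Replacing $j<0$ by $j\le 0$ (equivalently, working with $\cp'=\cup_{i\le 0}\cp[i]$ as the paper does) repairs this, and then your telescope argument correctly gives $B\in R[1]$: each $f_n$ is $\cp'$-null, perfectness of $\cp'$ makes $\sigma$ itself $\cp'$-null, so $(1-\sigma)_*$ acts as the identity on $\cu(P[j],\coprod M_n)$ for every $j\le 0$ and likewise after one shift, forcing $\cu(P[j],B)=0$. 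This route to $B\in R[1]$ is genuinely more direct than the paper's, which instead builds $LM=\hinli L_i$ first and then shows $\co(LM\to M)\in R[1]$ via Krause's coherent-functor category $\hatc$ together with the rather delicate Lemma~\ref{ldualt}(3)(i).

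However, once you pass to $j\le 0$, your verification of $A\in L$ no longer works as written. The cones $\co(M\to M_n)$ now lie in the extension-closure of $\cp[k]$ for $k\le 1$, and since $\cu(\cp,R)$ need not vanish one cannot conclude $\cu\bigl(\coprod\co(M\to M_n)[1],Z\bigr)=0$ for $Z\in R[2]$; the crude two-degree vanishing bound you invoke fails precisely at $k=1$. What saves the argument is a Mittag--Leffler step rather than outright vanishing: writing $A\cong\hinli L_n$ with $L_0=0$ and $\co(L_n\to L_{n+1})\cong X_n\in\cocp'$, one has for any $Y\in R[1]$ both $\cu(L_n,Y)=0$ and surjectivity of the transition maps $\cu(L_{n+1},Y[-1])\to\cu(L_n,Y[-1])$ (the obstruction is $\cu(X_n,Y)=0$), so the relevant $\varprojlim^1$ vanishes and $\cu(A,Y)=0$. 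This is exactly the mechanism behind Lemma~\ref{lcoulim}(\ref{ihc4}) and Lemma~\ref{lbes}(\ref{isesperp}), which the paper packages as the statement that the big hull of $\cp'$ is contained in $\perpp(R[1])$; that big hull is also the promised explicit description of $L$.
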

	
	This result significantly generalizes Theorem 5 of \cite{paucomp}, where all the elements of $\cp$ were assumed to be {\it compact}, that is, for any $P\in \cp$ 
	the functor $\cu(P,-):\cu\to \ab$ respects coproducts. 

We also give some applications of this existence statement. The most important of them treats {\it compactly generated} $t$-structures. 
 Note that these are popular objects of study (ever since their introduction in \cite{talosa}), with plenty of examples important to various areas of mathematics.

Recall that a $t$-structure $t=(\cu_{t\le 0},\cu_{t\ge 0})$ on $\cu$ is {\it generated} by a class $\cp\subset \obj \cu$ whenever $\cu_{t\le 0}=\cap_{i\ge 1}(\cp\perpp[i])$. So, we prove the following statement (see Corollary \ref{cgroth} below; actually, we start the main body of the paper from reducing this result to the existence of an injective cogenerator in $\hrt$).

 \begin{theore}\label{tgroth}
 Assume that $t$ is a $t$-structure on a smashing triangulated category $\cu$, and $t$ is generated by a set $\cp$ of compact objects.

Then the  heart $\hrt$ of $t$ is a Grothendieck abelian category, and the zeroth $t$-homology of the objects of 
 any full  triangulated subcategory $\cu_0$ of $\cu$ containing $\cp$ give generators for $\hrt$.\end{theore}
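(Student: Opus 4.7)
The plan is to apply Theorem \ref{textw} to the set $\cp$ itself to obtain a smashing weight structure $w$ on $\cu$, to extract from the heart $\hw$ an injective cogenerator of $\hrt$, and then to conclude the Grothendieck property via the reduction announced in the excerpt.

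First, a compact object $P$ is automatically perfect: since $\cu(P,-)$ commutes with coproducts, the class of morphisms annihilated by $\cu(P,-)$ is closed under coproducts. Hence Theorem \ref{textw} applies to $\cp$ and supplies a smashing weight structure $w=(L,R)$ with $R=\cap_{i<0}\cp\perpp[i]$. Comparing this with the hypothesis $\cu_{t\le 0}=\cap_{i\ge 1}\cp\perpp[i]$, one sees that $w$ is adjacent to $t$ (up to shift conventions), so that the truncation functors for $w$ and $t$ interact through the standard formulas for adjacent structures.

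Second, I would show that $H_0^t$ carries $\hw$ into the injective objects of $\hrt$, and that every $M\in\hrt$ admits a monomorphism $M\hookrightarrow H_0^t(N)$ for some $N\in\hw$. Injectivity follows because for $N\in R$ the Hom-vanishing $\cu(X,N[j])=0$ for $X\in\cu_{t\le-1}$ and $j\ge 0$ translates, via the standard formula expressing $\operatorname{Ext}^{1}_{\hrt}$ as morphisms between $t$-truncations, into $\operatorname{Ext}^{1}_{\hrt}(-,H_0^t(N))=0$. For the embedding, apply a weight decomposition of $M$ (viewed in $\cu$ through the heart inclusion) with respect to $w$ and take $H_0^t$ of the resulting triangle; the more explicit description of $L$ promised in Theorem \ref{tpgws} is needed in order to control the $t$-position of the decomposition so that the induced map on $H_0^t$ is a monomorphism.

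Third, I would assemble the conclusion. For a compactly generated $t$-structure it is known that $\hrt$ is AB3 abelian and that $\{H_0^t(P):P\in\cp\}$ is a generating set; this immediately implies that $\{H_0^t(X):X\in\cu_0\}$ is generating for every triangulated $\cu_0\supset\cp$, since it already contains the smaller generating family. Combining AB3, the generator, and the injective cogenerator from Step 2, and invoking the reduction alluded to in the excerpt (which upgrades these data to AB5 through Hom-into-the-cogenerator arguments), one concludes that $\hrt$ is Grothendieck. The main obstacle is the constructive half of Step 2: producing, for an arbitrary $M\in\hrt$, a specific $N_M\in\hw$ with $M\hookrightarrow H_0^t(N_M)$. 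The purely formal description of $w$ in Theorem \ref{textw} does not suffice; one has to exploit the explicit "Postnikov-tower" description of $L$ in Theorem \ref{tpgws}, which is precisely the place where the perfectness of $\cp$ is converted into a concrete cogeneration argument.
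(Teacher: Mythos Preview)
The central error is in Step 1: the weight structure $w$ that Theorem \ref{textw} produces from $\cp$ itself is \emph{not} adjacent to $t$, and no ``shift convention'' repairs this. You have $\cu_{w\ge 0}=R=\cap_{i<0}\cp\perpp[i]$, i.e.\ $M\in R$ iff $\cu(P,M[j])=0$ for all $j>0$, whereas $\cu_{t\le 0}=\cap_{i\ge 1}\cp\perpp[i]$ means $\cu(P,M[j])=0$ for all $j<0$. These conditions constrain opposite halves of the grading and neither coincides with the other's shift; nor does $R$ equal $\cu_{t\ge 0}$ (for instance $\cp\subset\cu_{t\ge 0}$, but $P\in R$ would force $\cu(P,P[j])=0$ for all $j>0$, which fails already for the sphere spectrum or for $\q\in D(\q)$). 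Consequently the entire mechanism of Step~2 --- transferring injectivity from $\hw$ to $\hrt$ via adjacency --- has no foundation.

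The paper's actual construction of the adjacent weight structure is substantially different and is precisely where the ``perfect (not compact) generation'' of Theorem \ref{textw} is indispensable. One forms the Brown--Comenetz duals $\hat P$ of the $P\in\cp$ (representing $M\mapsto\ab(\cu(P,M),\q/\z)$); the set $\hat\cp$ is perfect in $\cu\opp$ because $\cp$ is \emph{symmetric} to it, and Theorem \ref{textw} applied to $\hat\cp$ in $\cu\opp$ yields a weight structure whose opposite on $\cu$ is \emph{right} adjacent to $t$ (i.e.\ $\cu_{w\le 0}=\cu_{t\le 0}$). As Remark \ref{rvtt} explains, this $w$ is typically \emph{not} compactly generated (on $D(\q)\opp$ there are no nonzero compact objects), so your attempt to stay in $\cu$ with the compact set $\cp$ cannot succeed. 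Once the correct right-adjacent $w$ is in hand, Proposition \ref{pbw}(\ref{injadj}) gives the equivalence of $\hw$ with injectives of $\hrt$, and the ``single object'' part of Theorem \ref{tpgws}(3) supplies the cogenerator.

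A secondary gap is in Step 3: that $\{H_0^t(P):P\in\cp\}$ generates $\hrt$ is not something one can simply cite. The paper establishes the generator statement by recognising $H^t$ as the left Kan extension of its restriction to $\cuz=\lan\cp\ra$ (Proposition \ref{pkrause}), which forces every $H^t(M)$ to be a quotient of a coproduct of objects $H^t(C)$ with $C\in\obj\cuz$; this is why the statement is about the triangulated subcategory $\cuz$ rather than just $\cp$.
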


We recall here that statements of this sort are quite popular in the literature; see the introduction to \cite{saostovi}. In particular,  Theorem 3.7 of   \cite{parrasao} says that   countable colimits in the 	 category $\hrt$ are exact  for any compactly generated $t$-structure $t$ (this is clearly weaker then being a Grothendieck abelian category). Moreover, in \cite{humavit} our theorem was proved in the case where $\cu$ is an {\it algebraic} triangulated category and $t$ is non-degenerate, whereas  in \cite{saostovi} 
	 it was proved under the assumption that 
	$\cu$ is a {\it topological} well generated  (see Proposition \ref{pwgwstr} below) category.\footnote{Note also that Theorem B of ibid. says that $\hrt$ is an AB5 abelian category whenever $\cu$ is a "strong stable derivator" triangulated category, whereas Theorem C of ibid. gives the existence of generators for a wide class of $t$-structures.} Furthermore, a proof of 
	 the general case of Theorem \ref{tgroth} that relies on arguments 
	  different from our ones 
	 was 
	  independently  obtained in \cite{saostov}. 

The proof of Theorem \ref{tgroth} relies on two "recent" prerequisites. The first of them is the existence of a weight structure that is {\it right adjacent} to $t$ (that is, $\cu_{w\le 0}=\cu_{t\le 0}$); it is an easy consequence of Theorem \ref{textw} (along with certain results of earlier texts of the author). We use a "cogenerator" of  the heart $\hw$ to construct an injective cogenerator of the category $\hrt$. This enables us to apply Theorem 3.3 of \cite{posistov} (this is our second prerequisite) to obtain that $\hrt$ is an AB5 abelian category. Alternatively, if $t$ is non-degenerate then one may argue similarly to the proof of \cite[Corollary 4.9]{humavit}; see Remark \ref{r438} below or Corollary 4.3.9(3) of \cite{bpure}.\footnote{Respectively, loc. cit. is just a little weaker than Theorem \ref{tgroth}. 
Note also that \cite{bpure} is much more self-contained than our current paper. 
 Respectively, ibid. is quite long and and rather difficult to read. 
 For this reason the author has decided to split it and publish the resulting texts separately (see Remark 0.5 of ibid.); note also that these newer texts contain some results not contained in ibid., and the exposition in them is more accurate.}  

We also prove the existence of a certain "join" operation on the class of 
 perfectly generated weight structures on a given triangulated category $\cu$; see Corollary \ref{cwftw}(2) and Remark \ref{revenmorews}. 

Another application of Theorem  \ref{textw} is the following "well generatedness" result for weight structures  (saying in particular that all smashing weight structures on well generated categories can be obtained from  that theorem). 

\begin{prop}\label{pwgwstr}
Assume that $\cu$ is a well generated triangulated category (i.e., there exists a {\it regular} cardinal $\al$ 
and a perfect set  $S$ of {\it $\al$-small} objects such that $S\perpp=\ns$; see Definition \ref{dbecomp}). 

Then for any smashing weight structure $w$  
  on $\cu$ there exists a  cardinal $\al'$ 
	 such that for any regular $\be\ge \al'$ the weight structure $w$ is strongly $\be$-well generated in the following sense: the couple $(\cu_{w\le 0}\cap \obj \cu^\be, \cu_{w\ge 0}\cap \obj \cu^\be)$ is a weight structure on the triangulated subcategory $\cu^\be$ of $\cu$ consisting of $\be$-compact objects (see Definition \ref{dbecomp}(\ref{idcomp})), the class $\cp=\cu_{w\le 0}\cap \obj \cu^\be$ is essentially small and perfect, and $w=(L,R)$, where  $R=(\cp\perpp)[-1]$ and $L=(\perpp R)[1]$ (cf. Theorem \ref{textw}).  
\end{prop}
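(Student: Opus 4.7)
The strategy is to apply Theorem \ref{textw} to the set $\cp:=\cu_{w\le 0}\cap\obj\cu^\be$ for every sufficiently large regular cardinal $\be$, and then to show that the resulting perfectly generated weight structure coincides with $w$.

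First I would collect the basic input from Neeman's theory of well generated categories: above some threshold regular cardinal, $\cu^\be$ is essentially small and triangulated, $\obj\cu^\be$ is a perfect generating class, and perfection passes to subclasses. Combined with the fact that $\cu_{w\le 0}$ is closed under $[-n]$ for $n\ge 0$, this immediately gives that $\cp$ is essentially small, perfect, and closed under non-positive shifts, regardless of how $\be$ is chosen. Theorem \ref{textw} then produces a smashing weight structure $w'=(L',R')$, and closure of $\cp$ under non-positive shifts collapses the intersection in the description of $R'$ to $R'=\cp^\perp[-1]$, matching the formula in the proposition. The containment $\cu_{w\ge 0}\subset R'$ (equivalently $L'\subset\cu_{w\le 0}$) is immediate from $\cp\subset\cu_{w\le 0}$ and the orthogonality axiom of $w$.

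To fix $\al'$, I would take a perfect set $S$ of $\al$-small generators of $\cu$ and, for each $s\in S$, a $w$-weight decomposition $A_s\to s\to B_s\to A_s[1]$; each $A_s,B_s$ is $\be_s$-compact for some regular $\be_s$. Let $\al'$ be any regular cardinal exceeding $\al$ and all the $\be_s$. Then for every regular $\be\ge\al'$ all $A_s,B_s$ lie in $\obj\cu^\be$, so in particular $A_s\in\cp$, and the $w$-decompositions of the generators stay inside the ``$\cu^\be$-level'' of the picture.

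The main obstacle is proving the reverse inclusion $R'\subset\cu_{w\ge 0}$, equivalently $w'=w$. I would proceed by comparing weight decompositions: for any $Y\in\cu$, write its $w'$-decomposition $A'\to Y\to B'\to A'[1]$ with $A'\in L'\subset\cu_{w\le 0}$. Applied to $Y=s\in S$, the weak functoriality of weight decompositions, together with $A'\in\cu_{w\le 0}$, $A_s\in\cu_{w\le 0}$, $B_s\in\cu_{w\ge 1}$, and the orthogonality of $B'$ to $\cp$ in the relevant degrees, should force the $w$- and $w'$-decompositions of $s$ to be isomorphic; in particular $B'\in\cu_{w\ge 1}$ on the generators. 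Since $S$ generates $\cu$ and $w'$ is smashing (so $w'$-truncations commute with the relevant homotopy colimits), this propagates from $S$ to all of $\cu$, yielding $B'\in\cu_{w\ge 1}$ for every $Y$ and hence $w'=w$.

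For the restriction assertion, I would verify that the $w'$-decomposition of a $\be$-compact object can itself be taken $\be$-compact. This follows from the explicit iterative construction behind Theorem \ref{textw}: starting from $Y\in\cu^\be$, each stage adjoins a coproduct of objects of $\cp\subset\obj\cu^\be$ indexed by a set of maps of cardinality less than $\be$ (using $\be$-compactness of $Y$ and its intermediate truncations to bound the indexing sets), so the entire tower and its homotopy colimit remain inside $\cu^\be$. Essential smallness and perfection of $\cp=\cu_{w\le 0}\cap\obj\cu^\be$ then complete the picture. The principal difficulty is the decomposition-comparison step forcing $w'=w$; everything else is standard bookkeeping with the Neeman hierarchy of $\be$-compact objects.
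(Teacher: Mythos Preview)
Your argument has a genuine gap at the very first step: the claim that ``perfection passes to subclasses'' is false. If $\cp'\subset\cp$ then the class of $\cp'$-null morphisms \emph{contains} the class of $\cp$-null morphisms, and there is no reason why closure under coproducts should pass to this larger class. The paper does not take this shortcut; showing that $\cu_{w\le 0}\cap\obj\cu^\be$ is perfect is the content of Theorem~\ref{twgws}(II) (combined with part III.1), and the proof genuinely uses the $\be$-smallness of the objects together with the factorization property of Remark~\ref{rbecomp}(3) and the existence of $s$-decompositions inside $\cu^\be$. In particular, the perfection of $\cp$ is only established \emph{after} one knows that $w$ restricts to $\cu^\be$, so your order of deductions (first perfection, then Theorem~\ref{textw}, then restriction) is circular.

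The ``propagation'' step is also not an argument. Weight truncations are not functorial, so the phrase ``$w'$-truncations commute with the relevant homotopy colimits'' has no evident meaning; one cannot simply push a comparison of decompositions from a generating set $S$ to all of $\cu$ by taking colimits. The paper handles the equality $w'=w$ quite differently (Theorem~\ref{twgws}(I.\ref{ilscperw})): since $\obj\cu^\be$ is perfect with $(\obj\cu^\be)^\perp=\ns$, every object lies in the big hull of $\obj\cu^\be$, and then a $3\times 3$-diagram argument (Proposition~1.1.11 of \cite{bbd}) applied to the homotopy-colimit presentation shows that any $P\in\cu_{w\le 0}$ lies in the big hull of $L_s(\obj\cu^\be)$. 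Finally, your cardinality bound for the restriction step (``indexing sets of cardinality less than $\be$'') does not follow from $\be$-compactness, which controls factorizations through sub-coproducts rather than sizes of Hom-sets; the paper instead invokes an external result (Proposition~2.3.4(3) of \cite{bwcp}) once decompositions of the $\al$-compact generators have been placed in $\cu^{\al'}$.
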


The proof of this statement is closely related to  {\it torsion theories}. 
We recall that torsion theories essentially generalize both weight structures and $t$-structures. 
   Respectively, our classification of compactly generated torsion theories (in Theorem \ref{tclass})  immediately 
 gives the corresponding classifications of compactly generated weight structures and $t$-structures. All of these statements generalize the corresponding results of \cite{postov} 
 where {\it torsion theories} were studied.\footnote{Actually, in ibid. the term {\it complete Hom-orthogonal pair} was used. In some other papers torsion theories are called torsion pairs.} 

\begin{rrema}
 The existence of a torsion theory generated by a set $\cp$  of compact objects of $\cu$  (see Definition \ref{dhop}(\ref{ittg}) and  Theorem \ref{tclass}(\ref{iclass1}) below) is provided by Theorem 4.3 of \cite{aiya}; cf. Corollary 4.6 of ibid. for the case of $t$-structures and weight structures.  
 Next, Theorem 3.7 of \cite{postov} gave a certain classification of torsion theories of this type 
  when  $\cu$ is a "stable derivator" category. In  Theorem \ref{tclass}(\ref{iclass2},\ref{iclassts}) below we drop this assumption.
 
 Moreover, applying 
  Theorem \ref{tclass}(\ref{iclassts})  to $t$-structures we obtain the corresponding generalization of \cite[Theorem A.9]{kellerw}. 
  On the other hand, no analogue of   Theorem \ref{textw} is currently known to hold for $t$-structures; see Remark \ref{rigid}(1) below. Consequently, the author does not know whether arbitrary perfect sets of objects generate torsion theories. 
\end{rrema}

Let us  now describe the contents  of the paper. Some more information of this sort may be found in the beginnings of sections. 

In \S\ref{sts} we study $t$-structures. Applying  Theorem 3.3 of \cite{posistov} along with properties of certain Kan extensions (taken from \cite{krause})  we prove that the heart of a compactly generated $t$-structure is a Grothendieck abelian category whenever this category has an injective cogenerator.

In \S\ref{spgws} we switch to weight structures. Using rather standard countable homotopy colimit arguments we prove that any perfect set of objects generates a (smashing) weight structure; we also study the heart of this weight structure. Our main examples to this statement give weight structures that are right adjacent to compactly generated $t$-structures; their properties enable us to prove that injective cogenerators for the hearts of the latter exist indeed.

In \S\ref{swgs} we study  torsion theories; these essentially generalize both weight structures and $t$-structures. Respectively, our classification of compactly generated torsion theories gives a certain classification of compactly generated weight structures and $t$-structures on a given category. 
  Moreover, we study smashing torsion theories in well generated triangulated categories; this enables us to prove  Proposition \ref{pwgwstr}. 

In \S\ref{sdeg} we discuss some more general conditions that ensure the existence of adjacent $t-$ and weight structures

The author is deeply grateful to  Prof. Manuel Saorin,  Prof. George-Ciprian Modoi, and to the referees for their very useful comments.

\section{
On  hearts of compactly generated $t$-structures}\label{sts}

In  \S\ref{snotata} we give some definitions and conventions related to (mostly) triangulated categories.

In \S\ref{scgts} we recall some basic on $t$-structures (and on generators for them).

In \S\ref{stab5} we describe some more definitions and properties of  $t$-structures; they allow us to reduce the statement that hearts of compactly generated $t$-structures are AB5 categories to Corollary \ref{csymt} below.

In \S\ref{scoext} we recall (from \cite[\S2]{krause}) some properties of   left Kan extensions of homological functors defined on certain triangulated subcategories of $\cu$ to $\cu$ itself. We use them to prove that the  heart of  a compactly generated $t$-structure is a Grothendieck abelian category whenever it is AB5. 

\subsection{Some definitions and notation  for triangulated categories}\label{snotata}

\begin{itemize}

\item All products and coproducts in this paper will be small.

\item Given a category $C$ and  $X,Y\in\obj C$  we will write
$C(X,Y)$ for  the set of morphisms from $X$ to $Y$ in $C$.

\item For categories $C'$ and $C$ we write $C'\subset C$ if $C'$ is a full 
subcategory of $C$.

\item Given a category $C$ and  $X,Y\in\obj C$, we say that $X$ is a {\it
retract} of $Y$ 
 if $\id_X$ can be 
 factored through $Y$.\footnote{Clearly,  if $C$ is triangulated or abelian, 
then $X$ is a retract of $Y$ if and only if $X$ is its direct summand.}\ 

\item A 
 subcategory $\hu$ of an additive category $C$ 
is said to be {\it retraction-closed} in $C$ if it contains all retracts of its objects in $C$.

\item The symbol $\cu$ below will always denote some triangulated category;  it will often be endowed with a weight structure $w$. The symbols $\cu'$ and $\du$ will  also be used  for triangulated categories only.

\item For any  $A,B,C \in \obj\cu$ we will say that $C$ is an {\it extension} of $B$ by $A$ if there exists a distinguished triangle $A \to C \to B \to A[1]$.

\item A class $\cp\subset \obj \cu$ is said to be  {\it extension-closed}
    if it 
		is closed with respect to extensions and contains $0$.  




\item The smallest 
 retraction-closed extension-closed class of objects of $\cu$ containing   $\cp$  will be called the {\it 
envelope} of $\cp$.  

\item For $X,Y\in \obj \cu$ we will write $X\perp Y$ if $\cu(X,Y)=\ns$. 

For
$D,E\subset \obj \cu$ we write $D\perp E$ if $X\perp Y$ for all $X\in D,\
Y\in E$.

Given $D\subset\obj \cu$ we  will write $D^\perp$ for the class
$$\{Y\in \obj \cu:\ X\perp Y\ \forall X\in D\}.$$
Dually, ${}^\perp{}D$ is the class
$\{Y\in \obj \cu:\ Y\perp X\ \forall X\in D\}$.

\item Let $\cu'$ be a full triangulated subcategory of $\cu$. Then we will say that the elements of $\obj \cupr{}\perpp\subset \obj \cu$ are {\it $\cu'$-local}.

\item For a morphism $f\in\cu (X,Y)$ (where $X,Y\in\obj\cu$) we will call the third vertex
of (any) distinguished triangle $X\stackrel{f}{\to}Y\to Z$ a {\it cone} of
$f$.\footnote{Recall 
that different choices of cones are connected by non-unique isomorphisms.}\

\item Below $\au$ will always  denote some abelian category. 

	\item We will say that an additive covariant (resp. contravariant) functor from $\cu$ into $\au$ is {\it homological} (resp. {\it cohomological}) if it converts distinguished triangles into long exact sequences.
\end{itemize}


We will sometimes need the following simple observation.

\begin{lem}\label{lcloc}
Let $\cu'$ be a full triangulated subcategory of $\cu$. Then the full subcategory of $\cu'$-local objects of $\cu$ is triangulated. 
\end{lem}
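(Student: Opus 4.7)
The plan is to verify the two closure properties that define a triangulated subcategory of $\cu$: stability under the shift functors $[\pm 1]$, and stability under taking cones (equivalently, under extensions). Write $\hu$ for the full subcategory of $\cu'$-local objects.

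First I would handle shift-stability. Fix $M\in\obj\hu$ and $i\in\z$. Since $\cu'$ is itself a triangulated subcategory of $\cu$, its object class is closed under shifts; hence for any $N\in\obj\cu'$ one has $N[-i]\in\obj\cu'$, and the adjunction of shifts yields $\cu(N,M[i])\cong\cu(N[-i],M)=0$. Thus $M[i]\in\obj\hu$, and the analogous argument applies to $M[-1]$.

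Next I would handle extension-stability. Consider a distinguished triangle $A\to B\to C\to A[1]$ in which two of the three vertices lie in $\obj\hu$, and let $Y$ denote the remaining vertex. For an arbitrary $N\in\obj\cu'$ apply the cohomological functor $\cu(N,-)$ to the triangle. By the previous paragraph, all shifts of the two local vertices are again $\cu'$-local, so the two groups immediately bordering $\cu(N,Y)$ in the resulting long exact sequence vanish (a short case check handles each of the three possible positions of $Y$); exactness then forces $\cu(N,Y)=0$, so $Y\in\obj\hu$.

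I do not anticipate any genuine obstacle: the argument reduces to the long exact sequence obtained by applying $\cu(N,-)$ to a distinguished triangle, combined with the elementary fact that a triangulated subcategory is automatically closed under shifts.
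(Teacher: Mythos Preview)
Your proof is correct and is precisely the standard argument; the paper's own proof simply says ``Obvious and contained in Lemma 9.1.12 of \cite{neebook}'', which amounts to the same long-exact-sequence verification you wrote out.
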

\begin{proof} Obvious and cointained in Lemma 9.1.12 of \cite{neebook}.
\end{proof}

\subsection{
A reminder on $t$-structures}\label{scgts}

Let us now recall the notion of  a $t$-structure (mainly to fix  notation). 

\begin{defi}\label{dtstr}

A couple of subclasses  $(\cu_{t\le 0},\cu_{t\ge 0})$ of $\obj\cu$ will be said to be a
$t$-structure $t$ on $\cu$  if 
they satisfy the following conditions:

(i) $\cu_{t\le 0}$ and $\cu_{t\ge 0}$  are strict, i.e., contain all
objects of $\cu$ isomorphic to their elements.

(ii) $\cu_{t\le 0}\subset \cu_{t\le 0}[1]$ and $\cu_{t\ge 0}[1]\subset \cu_{t\ge 0}$.

(iii)  $\cu_{t\ge 0}[1]\perp \cu_{t\le 0}$.

(iv) For any $M\in\obj \cu$ there exists a  {\it $t$-decomposition} distinguished triangle
\begin{equation}\label{tdec}
L_tM\to M\to R_tM{\to} L_tM[1]
\end{equation} such that $L_tM\in \cu_{t\ge 0}, R_tM\in \cu_{t\le 0}[-1]$.

2. $\hrt$ is the full subcategory of $\cu$ whose object class is $\cu_{t=0}=\cu_{t\le 0}\cap \cu_{t\ge 0}$.
\end{defi}

We will also give some auxiliary definitions.

\begin{defi}\label{dtstro}
1. For any $i\in \z$ we will use the notation $\cu_{t\le i}$ (resp. $\cu_{t\ge i}$) for the class $\cu_{t\le 0}[i]$ (resp. $\cu_{t\ge 0}[i]$). 

2. $\hrt$ is the full subcategory of $\cu$ whose object class is $\cu_{t=0}=\cu_{t\le 0}\cap \cu_{t\ge 0}$.

3. We will say that $t$ is {\it left (resp. right) non-degenerate} if $\cap_{i\in \z}\cu_{t\ge i}=\ns$ (resp. $\cap_{i\in \z}\cu_{t\le i}=\ns$). 

Moreover, 
 $t$ is said to be {\it non-degenerate} if it is both left and right non-degenerate. 

4. We  say that $t$ is  {\it generated}  by a class  $\cp\subset \cu$   whenever $\cu_{t\le 0}=(\cup_{i>0}\cp[i])\perpp$.
\end{defi}


Let us recall some well-known properties of $t$-structures. 

\begin{pr}\label{prtst}
Let $t$ be a $t$-structure on a triangulated category $\cu$. Then the following statements are valid.

\begin{enumerate}
\item\label{itcan}
The triangle (\ref{tdec}) is canonically and functorially determined by $M$. Moreover, $L_t$ is right adjoint to the embedding $ \cu_{t\ge 0}\to \cu$ (if we consider $ \cu_{t\ge 0}$ as a full subcategory of $\cu$) and $R_t$ is left adjoint to  the embedding $ \cu_{t\le -1}\to \cu$.

\item\label{itha}
$\hrt$ is 
  an abelian category with short exact sequences corresponding to distinguished triangles in $\cu$.

\item\label{itho}
For any $n\in \z$ we will use the notation $t_{\ge n}$ for the functor $[n]\circ L_t\circ [-n]$, and $t_{\le n}=[n+1]\circ R_t\circ [-n-1]$.

Then there is a canonical isomorphism of functors $t_{\le 0}\circ t_{\ge 0}\cong t_{\ge 0}\circ  t_{\le 0}$. 
  (if we consider these functors as endofunctors of $\cu$), and the composite functor $H^t=H_0^t$ actually takes values in the subcategory $\hrt$ of $ \cu$. Furthermore, this functor $H^t:\cu \to \hrt$   is homological.  

\item\label{itd} 
For any $M\in \cu_{t\ge 0}$ there exists a (canonical) distinguished triangle 
$t_{\ge 1}(M)\to M\to H^t_0(M)\to t_{\ge 1}(M)[1]$. Respectively, $M$ 
 belongs to $M\in \cu_{t\ge 1}$ if and only if $H^t_0(M)=0$.

\item\label{itperp} $\cu_{t\le 0}= \cu_{t\ge 1}\perpp$ and $\cu_{t\ge 0}=(\cu_{t\le- 1}^{\perp})$; hence these classes are retraction-closed and extension-closed in $\cu$.

\item\label{itcone} For $M,N\in \cu_{t\le 0}$ and $f\in \cu(M,N)$ the object $\co(f)$ belongs to $\cu_{t\le 0}$ as well if and only if the morphism $H_0^t(f)$ is monomorphic in $\hrt$.

\item\label{itdeg} Assume that $\cu$ is a full subcategory of a triangulated category $\cu'$ and for the (identical) embedding $\cu\to \cu'$ there exists a right adjoint. Then there exists a unique $t$-structure $t'$ on $\cu'$ such that $\cu_{t\le 0}=\cu'_{t'\le 0}$, and  we also have $\cu'_{t'=0}=\cu_{t=0}$. 
 Moreover, if $t$ is generated by a  class $\cp\subset \obj \cu$ (in $\cu$)  then $t'$ is  generated by  $\cp$ in the category $\cu'$. 

\end{enumerate}
\end{pr}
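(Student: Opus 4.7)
My plan is to derive items (\ref{itcan})--(\ref{itcone}) from the classical Beilinson--Bernstein--Deligne theory of $t$-structures and to give a careful construction only for item (\ref{itdeg}). For (\ref{itcan}), the uniqueness and functoriality of the $t$-decomposition follow from axiom (iii): any morphism $M\to M'$ lifts uniquely to a morphism between decomposition triangles because $\cu_{t\ge 0}[1]\perp\cu_{t\le 0}$, and the adjunction claims then drop out of the corresponding Hom long exact sequence. For (\ref{itha}), kernels and cokernels in $\hrt$ are obtained by truncating the cones of morphisms between heart objects. For (\ref{itho}), the commutation $t_{\le 0}\circ t_{\ge 0}\cong t_{\ge 0}\circ t_{\le 0}$ follows by applying the uniqueness from (\ref{itcan}) to $t_{\ge 0}M$, and the homological nature of $H^t$ comes from the octahedral axiom applied to a distinguished triangle. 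Item (\ref{itd}) is immediate from (\ref{itho}); (\ref{itperp}) is direct from (iii) combined with the existence of decompositions; and (\ref{itcone}) follows from the long exact $H^t$-sequence of the triangle $M\to N\to\co(f)$.

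Item (\ref{itdeg}) is the substantive part. Writing $r\colon\cu'\to\cu$ for the right adjoint of the inclusion $\iota$, my plan is to define $\cu'_{t'\le 0}:=\cu_{t\le 0}$ (viewed as a subclass of $\obj\cu'$) and $\cu'_{t'\ge 0}:=\cu_{t\le -1}\perpp$, with $\perpp$ computed in $\cu'$; the first three $t$-structure axioms are then formal. For the decomposition axiom (iv) applied to $M\in\obj\cu'$, the crucial step is to produce a morphism $f\colon M\to R_tr(M)$ whose precomposition with the counit $\iota r(M)\to M$ recovers the canonical truncation $r(M)\to R_tr(M)$. Let $K$ denote the cone of the counit; the unit $\id_{\cu}\to r\iota$ is invertible because $\iota$ is fully faithful, so $r(K)=0$, i.e., $K$ is $\cu$-local in $\cu'$. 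Applying $\cu'(-,R_tr(M))$ to the triangle $\iota r(M)\to M\to K$ and using $\cu'(K[-1],R_tr(M))=0$ then produces the desired lift $f$. Taking $L$ to be the fiber of $f$ and applying $r$ identifies $rL$ with $L_tr(M)\in\cu_{t\ge 0}$; the adjunction formula $\cu'(X,L)=\cu(X,rL)$ for $X\in\cu_{t\le -1}$ then yields $L\in\cu'_{t'\ge 0}$.

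Uniqueness of $t'$ follows since a $t$-structure is determined by its non-positive part. Identifying the heart is straightforward: any $M\in\cu'_{t'=0}$ lies in $\cu$ (since $\cu'_{t'\le 0}\subset\obj\cu$) and satisfies $\cu(X,M)=\cu'(X,M)=0$ for every $X\in\cu_{t\le -1}$, so $M\in\cu_{t\ge 0}\cap\cu_{t\le 0}=\cu_{t=0}$; the reverse inclusion is clear. The claim that $t'$ is generated by $\cp$ whenever $t$ is reduces to the same adjunction identity together with the definition of ``generated.'' The main obstacle I anticipate is verifying that the fiber $L$ actually lands in the extended positive part $\cu'_{t'\ge 0}$ rather than merely in $\cu_{t\ge 0}$; this is precisely the point at which both the $\cu$-locality of $K$ and the adjunction formula $\cu'(X,-)\cong\cu(X,r(-))$ are indispensable.
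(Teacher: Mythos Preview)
Your treatment of (\ref{itcan})--(\ref{itcone}) is fine and matches the paper, which cites \cite{bbd} for (\ref{itcan})--(\ref{itperp}) and gives the same long exact sequence argument for (\ref{itcone}); for (\ref{itdeg}) the paper simply cites \cite{bvt}. Your argument for (\ref{itdeg}), however, applies the adjunction in the wrong direction. With $r$ right adjoint to $\iota$, the identity $\cu'(\iota X,L)\cong\cu(X,rL)$ controls morphisms \emph{into} $L$, whereas membership of $L$ in $\cu'_{t'\ge 0}$ is the condition $\cu'(L,\iota X)=0$ for $X\in\cu_{t\le -1}$ (since axiom (iii) reads $\cu_{t\ge 1}\perp\cu_{t\le 0}$, the class $\cu_{t\ge 0}$ is a \emph{left} orthogonal). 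There is no adjunction identity for $\cu'(L,\iota X)$ when only a right adjoint is assumed. Your side computation $\cu(X,L_tr(M))=0$ is also unjustified: the orthogonality gives $\cu(L_tr(M),X)=0$, not the reverse. Via the octahedron one obtains a triangle $\iota L_tr(M)\to L\to K$, and what would actually be needed is $\cu'(K,\iota X)=0$ for the $\cu$-local $K$; this simply does not follow from the hypotheses.

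The cure is to put the $\cu$-local objects on the other side. Since adjunction gives $K\in(\obj\cu)^{\perp}$, local objects naturally enlarge the \emph{right}-orthogonal class, which for a $t$-structure is $\cu_{t\le 0}$. Set $\cu'_{t'\ge 0}:=\cu_{t\ge 0}$ and $\cu'_{t'\le 0}:=(\cu_{t\ge 1})^{\perp}$ computed in $\cu'$; the decomposition of $M$ is then $\iota L_tr(M)\to M\to C$, and the octahedron yields a triangle $\iota R_tr(M)\to C\to K$ with both outer terms in $(\cu_{t\ge 0})^{\perp}$ --- the second precisely by the adjunction used in the correct direction --- so $C\in\cu'_{t'\le -1}$. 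The heart identification and the ``generated by $\cp$'' claim then go through as in your outline. This is exactly parallel to the weight-structure analogue Proposition~\ref{pbw}(\ref{iwdeg}), where again the left-orthogonal class $\cu_{w\le 0}$ is preserved while the right-orthogonal side absorbs the local objects; the printed ``$\cu_{t\le 0}=\cu'_{t'\le 0}$'' in (\ref{itdeg}) should accordingly read ``$\cu_{t\ge 0}=\cu'_{t'\ge 0}$'' (this does not affect the heart or generation statements used elsewhere in the paper).
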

\begin{proof}
All of these statements except the 
 two last ones were essentially established in \S1.3 of \cite{bbd} (yet see Remark \ref{rtst}(4) below). 

To prove assertion \ref{itcone} we note that     $\co(f)$ belongs to $\cu_{t\le 1}$ and $H_1^t(N)=0$ according to assertion \ref{itperp}.
Hence assertion \ref{itho} gives the following long exact sequence in the category $\hrt$: 
$$\dots\to  0=H_1^t(N)\to  H_1^t(\co(f)) \to H_0^t(M)\stackrel{H_0^t(f)}{\longrightarrow} H_0^t(N) \to \dots$$ 
Along with the last statement in assertion \ref{itd} this yields the result.

Assertion \ref{itdeg} 
 immediately follows from Proposition 
  3.4(1,3) of \cite{bvt}. 
\end{proof}

\begin{rema}\label{rtst}

1. The notion of a $t$-structure is clearly self-dual, that is, the couple $t\opp=(\cu_{t\ge 0},\cu_{t\le 0})$ gives a $t$-structure on the category $\cu\opp$. We will say that the latter $t$-structure is {\it opposite} to $t$.   

2. Part \ref{itperp} of our proposition says that  $t$ is  generated by  
 $\cu_{t\ge 0}$; 
 moreover, this class (along with its shifts) is closed with respect to coproducts.

3. We also obtain that the couple $t$ is uniquely determined by the choice either of $\cu_{t\ge 0}$ or of  $\cu_{t\le 0}$. 
Hence there can exist at most one $t$-structure that is generated by a given class of objects of $\cu$. 

4. Even though in \cite{bbd} where $t$-structures were introduced  and in several preceding papers of the author the "cohomological convention" for $t$-structures was used, in the current text we 
 use the homological convention; the reason for this is that it is coherent with the homological convention for weight structures (see Remark \ref{rstws}(3) below). Respectively, 
 our notation $\cu_{t\ge 0}$ 
 corresponds to the class $\cu^{t\le 0}$ in the cohomological convention. \end{rema}

\subsection{On smashing categories, compactly generated $t$-structures, and their hearts}
\label{stab5}

We will also need a few definitions related to 
 infinite (co)products.

\begin{defi}\label{dsmash}
\begin{enumerate}
\item\label{ismcat}
 We will say that a triangulated category $\cu$  is {\it (co)smashing} if it is closed with respect to (small) 
 coproducts (resp., products).

\item\label{ismcl}
If $\cu$ is (co)smashing and $\cp$ is a class of objects of $\cu$ then 
 $\cp$ is said to be  {\it (co)smashing} (in $\cu$) if it is closed with respect to $\cu$-coproducts (resp., $\cu$-products).

\item\label{idloc}
 If $\cu$ is smashing and $\du$ is a triangulated subcategory of $\cu$ that may be equal to $\cu$,   one says that 
	$\cp$ generates $\du$ {\it as a localizing subcategory} of $\cu$ if $\du$ is the smallest strictly 
	full 	 triangulated subcategory of $\cu$ that contains $\cp$ 
and is closed with respect to  $\cu$-coproducts.

\item\label{idcc} 
It will be convenient for us to use the following somewhat clumsy  terminology: a homological functor $H:\cu\to \au$ (where $\au$ is an abelian category) will be called a {\it cc} (resp. {\it wcc}) functor if it respects all  coproducts (resp. countable coproducts, i.e., the image of any countable coproduct diagram in $\cu$ is the corresponding coproduct diagram in $\au$), 
whereas a cohomological functor  $H'$ from $\cu$ into $\au$ will be called a {\it cp} functor if it converts all (small) coproducts that exist in $\cu$ into the corresponding $\au$-products. 

\item\label{idbrown} We will say that a smashing category $\cu$  satisfies the  {\it Brown representability} property whenever any 
cp functor from $\cu$ into 
 abelian groups is representable. 

\item\label{icomp}
An object $M$ of a smashing category $\cu$ is said to be {\it compact} if 
 the functor $H^M=\cu(M,-):\cu\to \ab$ respects coproducts. 

We will 
write $\cu^{\alz}$ for the full subcategory of $\cu$ whose objects are the compact objects of $\cu$; note that $\cu^{\alz}$ is 
triangulated according to (the easy) Lemma 4.1.4 of \cite{neebook}. 

\item\label{icompgc}
We say that $\cp$ {\it compactly generates} (a smashing category) $\cu$ and that $\cu$ is compactly generated if $\cp$ generates $\cu$ as its own localizing subcategory  and $\cp$ is a {\bf set} of compact objects of $\cu$.

\item\label{ismt} 
 A  $t$-structure $t$ on $\cu$ is said to be  (co)smashing if $\cu$ is (co)smashing and the class $\cu_{t\le 0}$ is 
 smashing (resp., $\cu_{t\ge 0}$ is 
 cosmashing). 

\item\label{icompgt} We will say that $t$ as above is {\it compactly generated} (by $\cp\subset \obj \cu$) if $\cp$ is a {\bf set} of compact objects.
\end{enumerate}
\end{defi}

Let us prove 
 easy properties of these notions.


\begin{pr}\label{pcgt}
Assume that $t$ is a 
$t$-structure on a smashing  triangulated category $\cu$, and $\cp$ is a { set} of compact objects of $\cu$. Then the following statements are valid. 

I. 
 Assume in addition that $\cu$ is cosmashing.

1. Then the class $\cu_{t\le 0}$ is cosmashing in $\cu$.

2. The category $\hrt$ is closed with respect to small products, and for $A_i\in \obj \hrt$ we have $\prod_{\hrt}A_i\cong H_0^t(\prod_{\cu}A_i)$.

3. The product of any family of distinguished triangles in $\cu$ is also distinguished. 


II. Assume that $t$ is a compactly generated $t$-structure. Then $t$ is smashing.

III. Assume that $t$ is smashing.

1. Then the category $\hrt$ is closed with respect to (small) coproducts and the embedding $\hrt\to \cu$ respects coproducts.

2. The functors $t_{\le 0}$, $t_{\ge 0}$, and $H_0^t$ respect $\cu$-coproducts. 

IV. 
 $\cp$ generates a certain $t$-structure on $\cu$.

V. Assume that $\cu$ is compactly generated. 

1. Then $\cu$ is cosmashing and  
 satisfies the Brown representability property. 

2. If $F: \cu\to \du$ is an exact functor (between triangulated categories) that respects coproducts then it possesses 
 a right adjoint $F^*$.

Moreover, $F^*$ respects coproducts as well whenever $F$ is a full embedding and the class $F(\obj (\cu))^{\perp_{\du}}$ is closed with respect to $\du$-coproducts.

VI. Assume that $t$ is generated by $\cp$; denote by $\cu_{\cp}$ is the localizing subcategory of $\cu$ generated by $\cp$. Then there exists a $t$-structure  $t_{\cp}$ on $\cu_{\cp}$  that is generated by $\cp$ (in this category), and we have $\hrt_{\cp}=\hrt$.

\end{pr}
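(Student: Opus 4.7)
The plan is to construct $t_{\cp}$ on $\cu_{\cp}$ directly by applying part IV of the present proposition internally to that subcategory, and then to identify its heart with $\hrt$ by transporting $t_{\cp}$ back up to $\cu$ via Proposition \ref{prtst}(\ref{itdeg}) and invoking the uniqueness noted in Remark \ref{rtst}(3).

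First I would verify that $\cp$ is a set of compact objects of $\cu_{\cp}$. Since $\cu_{\cp}$ is a localizing subcategory of $\cu$, its coproducts are computed in $\cu$; hence for any $P\in\cp$ and family $\{X_i\}\subset\obj\cu_{\cp}$ one has
$\cu_{\cp}(P,\coprod_i X_i)=\cu(P,\coprod_i X_i)\cong\coprod_i\cu(P,X_i)=\coprod_i\cu_{\cp}(P,X_i)$,
so compactness transfers. The same observation makes $\cu_{\cp}$ smashing. Applying part IV of the proposition to the pair $(\cu_{\cp},\cp)$ then yields the desired $t$-structure $t_{\cp}$ on $\cu_{\cp}$ generated by $\cp$ in $\cu_{\cp}$.

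Second, to pin down $\hrt_{\cp}$ I would invoke Proposition \ref{prtst}(\ref{itdeg}) applied to the inclusion $\iota\colon\cu_{\cp}\to\cu$. This requires $\iota$ to admit a right adjoint, which is supplied by part V.2 of the current proposition: $\cu_{\cp}$ is compactly generated by $\cp$ (by the very definition of ``localizing subcategory generated by $\cp$''), and $\iota$ is an exact coproduct-preserving functor. Proposition \ref{prtst}(\ref{itdeg}) then furnishes a unique $t$-structure $\tilde{t}$ on $\cu$ with $\cu_{\tilde{t}\le 0}=(\cu_{\cp})_{t_{\cp}\le 0}$ and heart equal to $\hrt_{\cp}$; its last clause also tells us $\tilde{t}$ is generated by $\cp$ in $\cu$.

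Finally, Remark \ref{rtst}(3) guarantees that a $t$-structure on $\cu$ is determined by the class that generates it, so $\tilde{t}=t$, which gives $\hrt=\hrt_{\cp}$. I do not anticipate a serious obstacle; the only mildly delicate point is the initial compactness check, but it is an immediate consequence of the fact that the inclusion of a localizing subcategory preserves coproducts.
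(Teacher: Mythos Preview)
Your proof of part VI is correct and follows essentially the same route as the paper: apply part IV inside $\cu_{\cp}$ to obtain $t_{\cp}$, use part V.2 to get a right adjoint to the inclusion $\cu_{\cp}\hookrightarrow\cu$, and then invoke Proposition \ref{prtst}(\ref{itdeg}) to identify the hearts. Your write-up is somewhat more explicit than the paper's---you spell out the compactness check for $\cp$ in $\cu_{\cp}$ and the uniqueness step via Remark \ref{rtst}(3)---but these are precisely the implicit details behind the paper's terse ``Hence Proposition \ref{prtst}(\ref{itdeg}) implies that $\hrt_{\cp}=\hrt$ indeed.''
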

\begin{proof}
I.1. Obvious from Proposition \ref{prtst}(\ref{itperp}). 

2. We should prove that the object $H_0^t(\prod_{\cu}A_i)$ is the product of $A_i$ in $\hrt$. Now,  
 $\prod_{\cu}A_i\in \cu_{t\le 0}$ by the previous assertion, and 
 it remains to note that  $H_0^t(\prod_{\cu}A_i)\cong 
L_t(\prod_{\cu}A_i)$ according to Proposition \ref{prtst}(\ref{itho}), and apply the adjunction provided by Proposition \ref{prtst}(\ref{itcan}). 

3. Immediate from Proposition 1.2.1 
 of  \cite{neebook}.

II. Obvious.

III. Easy and well-known; see Proposition 3.4(1,2) of \cite{bvt}. 

IV. This is Theorem A.1 of \cite{talosa}.

V. All these statements except the last one are well-known as well; see Proposition 8.4.1, Theorem 8.3.3, 
  Proposition 8.4.6, and Theorem 8.4.4  of \cite{neebook}. 
   Furthermore, Lemma \ref{lper}(\ref{il7},\ref{il4},\ref{il4c}) below gives more detail on these matters.
 

The "moreover" part 
 assertion V.2 is an immediate consequence of  (the rather standard and easy) 
Proposition 3.4(5) of \cite{bvt}.

VI. The existence of $t_{\cp}$ is provided by assertion IV (applied to the category $\cu_{\cp}$). Next, the embedding $i:\cu_{\cp}\to\cu$ respects coproducts; since  $\cu_{\cp}$ is compactly generated, assertion V.2 implies that $i$ possesses a right adjoint. Hence Proposition \ref{prtst}(\ref{itdeg}) implies that $\hrt_{\cp}=\hrt$ indeed.
\end{proof}

\begin{rema}\label{rsmdual}
Certainly, the obvious categorical dual of part I of our proposition (that concerns $t$-structures on smashing triangulated categories; cf. Remark \ref{rtst}(1)) is valid as well. 
  Yet the duals to parts I.1 and I.2  will not be applied in the current paper. 

Recall also that both Proposition \ref{pcgt}(I) and its dual are essentially given by Proposition 3.2 of \cite{parrasao}.
\end{rema}

 We will also need the following key statement that is given by Corollary \ref{csymt} below.

\begin{lem}\label{lpcgt}
Assume that $t$ is a compactly generated $t$-structure. 

 Then the category $\hrt$ has an injective cogenerator. 
\end{lem}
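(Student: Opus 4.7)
The plan is to transfer an injective cogenerator to $\hrt$ from a weight structure right adjacent to $t$ via the $t$-truncation functor $H^t_0$. Observe first that any set of compact objects is perfect, since for compact $P$ we have $\cu(P, \coprod_\alpha h_\alpha) = \bigoplus_\alpha \cu(P, h_\alpha)$, so $\cpn$ is closed under coproducts. Applying Theorem \ref{textw} to $\cp$ yields a smashing weight structure $w = (L, R)$ on $\cu$, which is moreover right adjacent to $t$ (i.e., $\cu_{w\le n} = \cu_{t\le n}$ for all $n \in \z$); this right-adjacency reduces to a direct verification using the defining formulas $R = \cap_{i<0}(\cp\perpp[i])$ and $L = (\perpp R)[1]$ against $\cu_{t\le 0} = (\cup_{i>0}\cp[i])\perpp$, together with the standard inclusion $\cp \subset \cu_{t\ge 0}$.

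Next, we claim that for every $E \in \cu_{w\ge 0}$, the object $H^t_0(E) \in \hrt$ is injective and
\[
\hrt(N, H^t_0(E)) \;\cong\; \cu(N, E) \qquad \text{for all } N \in \hrt.
\]
The isomorphism follows by applying $\cu(N, -)$ to the $t$-decomposition triangle $t_{\ge 1}(E) \to E \to H^t_0(E) \to t_{\ge 1}(E)[1]$ and invoking $\cu(\cu_{t\le 0}, \cu_{t\ge 1}) = 0$ from Proposition \ref{prtst}(\ref{itperp}). For injectivity, a short exact sequence $0 \to A \to B \to C \to 0$ in $\hrt$ corresponds to a distinguished triangle $A \to B \to C \to A[1]$ in $\cu$, and the obstruction to surjectivity of $\hrt(B, H^t_0(E)) \to \hrt(A, H^t_0(E))$ lies in $\cu(C[-1], E)$; this vanishes because $C[-1] \in \cu_{t\le -1} = \cu_{w\le -1}$ is orthogonal to $E \in \cu_{w\ge 0}$ by the weight-structure axiom.

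Finally, we construct the cogenerator. Let $\cp' := \{P[i] : P \in \cp, i \in \z\}$, a set of compact objects detecting nonzero objects of $\cu$ by compact generation. For each $Q \in \cp'$, apply Brown representability (Proposition \ref{pcgt}(V.1)) to the cp functor $X \mapsto \mathrm{Hom}_\ab(\cu(Q, X), \q/\z)$ to obtain $I_Q$ with $\cu(-, I_Q) \cong \mathrm{Hom}_\ab(\cu(Q, -), \q/\z)$; since $\q/\z$ cogenerates $\ab$, we have $\cu(N, I_Q) \ne 0$ iff $\cu(Q, N) \ne 0$. Set $E_Q := w_{\ge 0}(I_Q) \in \cu_{w\ge 0}$: the weight triangle $w_{\le -1}(I_Q) \to I_Q \to E_Q$ together with $w_{\le -1}(I_Q) \in \cu_{w\le -1} = \cu_{t\le -1}$ (orthogonal to $N \in \hrt \subset \cu_{t\ge 0}$) yields an injection $\cu(N, I_Q) \hookrightarrow \cu(N, E_Q)$. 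Let $E := \prod_{Q \in \cp'} E_Q$; this product exists since $\cu$ is cosmashing by Proposition \ref{pcgt}(V.1), lies in $\cu_{w\ge 0}$ (product-closed as an intersection of right-orthogonal complements), and satisfies $H^t_0(E) = \prod_Q H^t_0(E_Q)$ by Proposition \ref{pcgt}(I.2), making it injective in $\hrt$ via the previous step. For any nonzero $N \in \hrt$, some $Q \in \cp'$ gives $\cu(Q, N) \ne 0$, whence $\hrt(N, H^t_0(E_Q)) \cong \cu(N, E_Q) \ne 0$ and $\hrt(N, H^t_0(E)) = \prod_Q \hrt(N, H^t_0(E_Q))$ is nonzero; so $H^t_0(E)$ is the desired injective cogenerator.

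The main obstacle is this last step: representing objects $I_Q$ from Brown's theorem need not lie in $\cu_{w\ge 0}$, and the weight truncation to $E_Q$ must preserve the detection of nonzero $N \in \hrt$. The crux is the right adjacency from the first step, which forces the truncation error $w_{\le -1}(I_Q) \in \cu_{t\le -1}$ to be invisible to maps from $\cu_{t\ge 0} \supset \hrt$; without this coincidence of the two truncation levels, there would be no way to pass from the Brown-representing object into the weight heart while retaining the property of detecting nonzero objects of $\hrt$.
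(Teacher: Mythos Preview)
Your argument has a genuine gap in the first step: the weight structure $w=(L,R)$ obtained by applying Theorem~\ref{textw} directly to $\cp$ is \emph{not} right adjacent to $t$. Unwinding the formula $R=\cap_{i<0}(\cp^\perp[i])$ gives $\cu_{w\ge 0}=\bigl(\cup_{k<0}\cp[k]\bigr)^\perp$, and hence $\cu_{w\le 0}={}^\perp\bigl((\cup_{k\le 0}\cp[k])^\perp\bigr)$; there is no reason for this double orthogonal to coincide with $\cu_{t\le 0}=(\cup_{k>0}\cp[k])^\perp$. Already for $\cu=D(\z)$ and $\cp=\{\z\}$ one computes $\cu_{w\ge 0}=\{M:H_k(M)=0\ \forall k<0\}=\cu_{t\ge 0}$, so $w$ is \emph{left} adjacent to $t$, and for instance $\z/2\in\cu_{t\le 0}$ but $\z/2\notin\cu_{w\le 0}$ since $\cu(\z/2,\z/2[1])\ne 0$ while $\z/2[1]\in\cu_{w\ge 1}$. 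Consequently the identifications $\cu_{w\le -1}=\cu_{t\le -1}$ you invoke in Steps~2 and~3 are unavailable, and the injectivity and detection arguments collapse. (There is also a minor gap: you invoke Brown representability via Proposition~\ref{pcgt}(V.1), which needs $\cu$ compactly generated, whereas the hypothesis only says $t$ is; this is easily repaired by first passing to the localizing subcategory generated by $\cp$ as in Proposition~\ref{pcgt}(VI).)

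The paper's route to the right adjacent $w$ is genuinely different and explains why Theorem~\ref{textw} must be proved for arbitrary perfect (not merely compact) sets. One takes the Brown--Comenetz duals $\hat{\cp}$ of $\cp$; these form a perfect set in $\cu^{op}$ (Theorem~\ref{tsymt}(I,II.1)) but are essentially never compact there (Remark~\ref{rvtt}). Applying Theorem~\ref{tpgws} in $\cu^{op}$ to $\hat{\cp}$ produces a weight structure whose opposite is the desired right adjacent $w$ on $\cu$; the verification of $\cu_{w\le 0}=\cu_{t\le 0}$ then goes through the symmetry $\cp^\perp={}^\perp\hat{\cp}$. Once this $w$ is in hand, your Steps~2 and~3 are essentially sound and close in spirit to the paper's use of Proposition~\ref{pbw}(\ref{injadj}) and Theorem~\ref{tpgws}(3) to extract an injective cogenerator from $\hw$.
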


Now we  establish a "significant part" of Theorem \ref{tgroth} (modulo Lemma \ref{lpcgt}). 

\begin{theo}\label{tab5}
Let $\cu$ be a smashing  triangulated category; 
let $t$ be a compactly generated $t$-structure on it.

Then the abelian category $\hrt$ is an AB5 one. \end{theo}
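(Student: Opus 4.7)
The plan is to reduce to the case where $\cu$ itself is compactly generated, assemble the structural facts about $\hrt$ from Proposition \ref{pcgt}, and then apply Theorem 3.3 of \cite{posistov} using the injective cogenerator produced by Lemma \ref{lpcgt}.

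First I would invoke Proposition \ref{pcgt}(VI) to pass from $\cu$ to the localizing subcategory $\cu_{\cp}\subset \cu$ generated by a compact generating set $\cp$ for $t$. The induced $t$-structure $t_{\cp}$ on $\cu_{\cp}$ is again compactly generated by $\cp$ and has the same heart $\hrt$; moreover, elements of $\cp$ stay compact inside $\cu_{\cp}$. So without loss of generality $\cu$ itself is compactly generated.

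Next I would collect the infrastructure on $\hrt$ from Proposition \ref{pcgt}. Part II gives that $t$ is smashing; by III.1 the heart is closed under small coproducts and these are computed inside $\cu$, while by III.2 the truncation functors $t_{\le 0}$, $t_{\ge 0}$ and $H_0^t$ all respect coproducts. By V.1 the (now compactly generated) category $\cu$ is also cosmashing and satisfies the Brown representability property; hence parts I.1--I.3 apply, so $\hrt$ is closed under small products (with $\prod_{\hrt}A_i\cong H_0^t(\prod_{\cu}A_i)$) and products of distinguished triangles in $\cu$ stay distinguished. Thus $\hrt$ sits inside a smashing-and-cosmashing triangulated category in a fully compatible way.

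Finally, Lemma \ref{lpcgt} furnishes an injective cogenerator for $\hrt$. Together with the product/coproduct compatibilities above, this places the pair $(\cu,t)$ into the setup of Theorem 3.3 of \cite{posistov}, whose conclusion is precisely that the heart is AB5. The main obstacle is to verify that our hypotheses really match the precise axiomatic conditions of the cited theorem -- essentially bookkeeping, since we have been careful to extract exactly the kind of smashing/cosmashing compatibilities that ibid.\ requires. The substantive content of the whole argument is hidden in Lemma \ref{lpcgt}, whose proof in \S\ref{scoext} ultimately depends on producing a weight structure right adjacent to $t$ (using the perfect-set existence result of \S\ref{spgws}) and transporting a cogenerator of its heart through $H_0^t$ to an injective cogenerator of $\hrt$.
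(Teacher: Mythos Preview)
Your overall architecture matches the paper's proof: reduce to $\cu$ compactly generated via Proposition \ref{pcgt}(VI), observe that $\cu$ is then cosmashing, record the (co)product compatibilities of $\hrt$ inside $\cu$, invoke Lemma \ref{lpcgt} for an injective cogenerator, and appeal to the Positselski--\v{S}\v{t}ov\'\i\v{c}ek criterion. So the plan is right.

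However, there is a genuine gap in your final paragraph. The criterion you are citing (the dual to Theorem 3.3 of \cite{posistov}, as stated in the introduction to that paper) is an \emph{if and only if} statement: an abelian category with small products, small coproducts, and an injective cogenerator is AB5 if and only if the canonical morphism $\coprod M_i\to \prod M_i$ is a monomorphism for every family $(M_i)$. Having an injective cogenerator alone does not force AB5, and the ``smashing/cosmashing compatibilities'' you list do not by themselves imply this monomorphism condition. So what you call ``essentially bookkeeping'' is in fact the substantive remaining step, and you have not indicated how you would carry it out.

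In the paper this step is done as follows. Writing $f:\coprod_{\cu}M_i\to \prod_{\cu}M_i$ for the canonical morphism in $\cu$, the compatibilities you recorded identify the $\hrt$-morphism $\coprod_{\hrt}M_i\to \prod_{\hrt}M_i$ with $H_0^t(f)$. By Proposition \ref{prtst}(\ref{itcone}) this is a monomorphism precisely when $\co(f)\in \cu_{t\le 0}$. After enlarging $\cp$ to $\cup_{i\ge 0}\cp[i]$ one has $\cu_{t\le 0}=(\cp[1])^{\perp}$, so one must check $P[1]\perp \co(f)$ for every $P\in \cp$. Here compactness of $P$ is used in an essential way: it gives $\cu(P,\coprod M_i)\cong \bigoplus \cu(P,M_i)$, whence the map $\cu(P,\coprod M_i)\to \cu(P,\prod M_i)\cong \prod \cu(P,M_i)$ is injective, and $\cu(P[1],\prod M_i)=0$ since $M_i\in \cu_{t\le 0}$; the long exact sequence for $f$ then yields $\cu(P[1],\co(f))=0$. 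This computation is short but not formal --- it is exactly where the compact generation of $t$ enters --- and your proposal should include it rather than absorbing it into ``bookkeeping''.
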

\begin{proof}
 Assume that $t$ is generated by a set $\cp\subset \obj\cu^{\alz}$. 
Then Proposition \ref{pcgt}(VI) allows us to replace $\cu$ by the corresponding subcategory $\cu_{\cp}$; thus  we can assume that $\cu$ is  compactly generated (as well). 
Hence $\cu$ is 
 cosmashing according to Proposition \ref{pcgt}(V.1). 

  Since $\hrt$ has an injective cogenerator according to Lemma \ref{lpcgt},  
    we  have all the ingredients needed for the criterion described in the introduction to \cite{posistov} (this if and only if statement is also 
 the categorical dual to Theorem 3.3 of ibid.).

 Consequently, for any family of $M_i\in \obj \hrt$ (that is indexed by a set $X$) it suffices to verify that the canonical morphism $\coprod_{\hrt} M_i\to \prod_{\hrt} M_i$ is monomorphic (actually, it suffices to take $M_i$ to be equal to a single object of $\hrt$ here; see condition (ii) in loc. cit.). 
  According to parts (II and)  I.2 and III.2 of   Proposition \ref{pcgt} we can present this morphism as $H_0^t(f)$, where $f$ is the canonical morphism $\coprod_{\cu} M_i\to \prod_{\cu} M_i$. Hence Proposition \ref{pcgt}(I.1, II) allows us to apply Proposition \ref{prtst}(\ref{itcone}) and to pass to checking that 
 $\co(f)\in \cu_{t\le 0}$. 

Now, 
  we can certainly replace the set $\cp$ by $\cup_{i\ge 0}\cp[i]$ (see Definition \ref{dtstro}(4)); we obtain $\cu_{t\le 0}=(\cp[1])\perpp$.	
 Next, for any $P\in \cp$ we have $\cu(P,  \coprod_{\cu} M_i)=\bigoplus \cu(P,   M_i)$ and (clearly) $\cu(P,  \prod_{\cu} M_i)=\prod \cu(P,   M_i)$. Hence the long exact sequence $$\dots\to \cu(P[1],  \prod_{\cu} M_i) \to \cu(P[1], \co(f))\to  \cu(P,  \coprod_{\cu} M_i)\to \cu(P,  \prod_{\cu} M_i)\to \dots $$ yields that $P[1]\perp \co(f)$. As we have just explained, this allows us to conclude the proof. 
\end{proof}

\subsection{Extensions of homological functors and generators for Ht}\label{scoext} 
 
Let us discuss the properties of certain extensions of homological functors from triangulated subcategories of compact objects.  
    Our construction is easily seen to be the standard pointwise construction of the corresponding left Kan extensions; yet we will not exploit this point of view below.

\begin{pr}\label{pkrause}

Let $\cuz$ be an essentially small triangulated subcategory of a smashing triangulated category $\cu$; let $H_0:\cu_0\to \au$ be a homological functor, where $\au$ is an AB5 abelian category. 
For any $M\in \obj \cu$ we fix a resolution
\begin{equation}\label{ekrause}
 \coprod_{i\in I}
H_{C_M^i}\to \coprod_{j\in J} 
H_{C_M^j}\to H_M\to 0,\end{equation} where  we use the notation $H_M$ for the restriction of the functor $\cu(-,M)$ to $\cuz$; the existence of a resolution of this sort is easy and demonstrated in the proof Lemma 2.2 of \cite{krause}.

Then for the association $H:M\mapsto \cok(\coprod H_0(C_M^i)\to  \coprod H_0(C_M^j))$  the following statements are valid.

 \begin{enumerate}
\item\label{ikr1} $H$ is a homological functor $\cu\to \au$. 

\item\label{ikrchar} For any $\adfu(\cuz\opp,\ab)$-resolution $ \coprod H_{C'{}_M^{i}}\to \coprod H_{C'{}_M^{j}} \to H_M\to 0$  of $H_M$,  where $C'{}_M^{i}$ and $C'{}_M^{j}$ 
 are  some objects of $\cuz$, the object $\cok(\coprod H_0(C'{}_M^{i})\to  \coprod H_0(C'{}_M^{j}))$ is canonically isomorphic to $H(M)$. 

In particular, the restriction of $H$ to $\cuz$ is canonically isomorphic to $H_0$. 

\item\label{ikradj} Let $\eu$ be a full smashing  triangulated subcategory of $\cu$ that contains $\cuz$ 
and  assume that there exists a right adjoint $i^*$ to the embedding $i:\eu\to \cu$. 
Then we have $H\cong H^{\eu}\circ i^*$, here the functor $H^{\eu}:\eu\to \au$ is defined on $\eu$ using the same construction as the one used for the definition of $\cu$.

\item\label{ikr8}  Assume that all objects of $\cuz$ are compact.
Then  $H$ 
is determined (up to a canonical isomorphism) by the following conditions: it respects coproducts, and its restriction to $\cuz$ equals $H_0$, and it kills $\cuz^\perp$.
\end{enumerate}
\end{pr}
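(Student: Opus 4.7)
The strategy is to recognize $H$ as (the pointwise value of) the left Kan extension of $H_0$ along the inclusion $\cuz\hookrightarrow\cu$, and to read the four statements off its universal properties.

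Set $\mathcal{F}=\adfu(\cuz\opp,\ab)$. Since $\cuz$ is essentially small, $\mathcal{F}$ is a Grothendieck abelian category whose representables $H_C=\cu(-,C)|_{\cuz}$ form a family of projective generators, so every object of $\mathcal{F}$ admits a two-step presentation as in \eqref{ekrause}. Because $\au$ is AB5 (hence cocomplete), there is a unique (up to canonical isomorphism) cocontinuous functor $L\colon \mathcal{F}\to \au$ satisfying $L(H_C)\cong H_0(C)$ for $C\in\obj\cuz$; one defines it on a two-step presentation by $F\mapsto\cok(\coprod H_0(A_i)\to \coprod H_0(B_j))$ and verifies independence of the chosen presentation by lifting between two resolutions via projectivity of the representables and taking the induced map on cokernels. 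Applying this with $F=H_M$ gives $H(M)=L(H_M)$; this at once yields the functoriality of $H$ in $M$, the independence assertion \eqref{ikrchar}, and the identification $H|_{\cuz}\cong H_0$.

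For \eqref{ikr1}, applying $\cu(C,-)$ to a triangle $M'\to M\to M''\to M'[1]$ for each $C\in\obj\cuz$ shows that $H_{M'}\to H_M\to H_{M''}$ is middle-exact in $\mathcal{F}$. The principal obstacle is transporting this middle-exactness across $L$, which is only a priori right exact. My plan is to exploit the tautological formula $H_M=\inli_{C\to M} H_C$ (colimit over the comma category of morphisms from objects of $\cuz$ into $M$), so that $L(H_M)=\inli_{C\to M} H_0(C)$; the triangle in $\cu$ then induces compatible maps of such colimits, and middle-exactness of the resulting sequence reduces, via AB5-exactness of filtered colimits in $\au$ together with a cofinality argument using that $\cuz$ is itself triangulated, to the middle-exactness of $H_0$ on distinguished triangles of $\cuz$, which holds by hypothesis on $H_0$.

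Parts \eqref{ikradj} and \eqref{ikr8} are then essentially formal. For \eqref{ikradj}, the adjunction $i\dashv i^*$ combined with $\cuz\subset\eu$ gives $\cu(C,M)\cong \eu(C,i^*M)$ for $C\in\obj\cuz$, so $H_M$ and $H^\eu_{i^*M}$ coincide as objects of $\mathcal{F}$; since $L$ depends only on $\cuz$ and $H_0$, applying $L$ yields $H(M)\cong H^\eu(i^*M)$ naturally in $M$. For \eqref{ikr8}, given another cc functor $H'$ with $H'|_{\cuz}\cong H_0$ and $H'(\cuz\perpp)=0$, compactness of the objects of $\cuz$ lets us realize the surjection $\coprod H_{C_M^j}\to H_M$ as an honest morphism $f\colon\coprod_j C_M^j\to M$ in $\cu$; iterating the construction on the cone of $f$ produces a tower whose countable homotopy colimit lies in $\cuz\perpp$ and so is killed by $H'$, and applying $H'$ to the triangles of this tower identifies $H'(M)$ canonically with $\cok(\coprod H_0(C_M^i)\to \coprod H_0(C_M^j))=H(M)$.
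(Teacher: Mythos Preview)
Your overall strategy---recognizing $H$ as a pointwise left Kan extension and deriving everything from that---is correct and is exactly what the paper has in mind (it says so explicitly just before the statement). For parts \ref{ikr1}--\ref{ikradj} your account is essentially the standard one; the paper simply cites \cite[Lemma~2.2]{krause} for \ref{ikr1} and calls \ref{ikrchar}--\ref{ikradj} ``straightforward'', so you have supplied more detail than the paper does, and the filtered-colimit/cofinality sketch for \ref{ikr1} is the right idea (the comma category $(\cuz\downarrow M)$ is indeed filtered because $\cuz$ is triangulated).

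The one place where your route genuinely diverges from the paper is part~\ref{ikr8}. The paper's argument is a clean two-step reduction: first, when $\cuz$ generates $\cu$ as a localizing subcategory, invoke \cite[Proposition~2.3]{krause}; second, in general, pass to the localizing subcategory $\eu$ generated by $\cuz$, note that $i^*$ exists and respects coproducts (Proposition~\ref{pcgt}(V.2)), and use the already-proved part~\ref{ikradj} to transport the result back. Your approach instead builds a cellular tower $M=M_0\to M_1\to\cdots$ with $\hinli M_i\in\cuz^\perp$ and tries to read off $H'(M)$ directly. The tower construction is fine, but your final sentence---that applying $H'$ to the triangles ``identifies $H'(M)$ canonically with $\cok(\coprod H_0(C_M^i)\to\coprod H_0(C_M^j))$''---is where the work is hiding. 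You have not explained why $H'$ of the tower recovers the \emph{specific} cokernel coming from the \emph{original} resolution \eqref{ekrause} (at stage one your $P_0=\coprod C_M^j$, but the $C_M^i$ only enter via the \emph{next} stage, which uses a fresh resolution of $M_1$, not the given one); nor have you said why $H'$ commutes with the homotopy colimit well enough to pin down $H'(M)$ rather than just a quotient of it. These points can be fixed (e.g.\ by first producing a natural transformation $H\to H'$ from the Kan-extension universal property and then checking it is an isomorphism on the tower), but as written this step is a gap. The paper's reduction via \ref{ikradj} sidesteps all of this.
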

\begin{proof}
\ref{ikr1}. Immediate from \cite[Lemma 2.2]{krause} (see also Proposition 2.3 of ibid.).

\ref{ikrchar}--\ref{ikradj}.
The proofs are straightforward (and very easy). 

\ref{ikr8}. 
In the case where $\cuz$ generates $\cu$ as its own localizing category the assertion is given by Proposition 2.3 of \cite{krause}. Now, in the general case the embedding of the  localizing category generated by $\cuz$ into $\cu$ possesses a
 right adjoint $i^*$ that respects coproducts according to Proposition \ref{pcgt}(V.2).   
Hence the general case of the assertion reduces to loc. cit. as well if we apply assertion \ref{ikradj}.
\end{proof}

Now we can ("almost") finish the proof of Theorem \ref{tgroth}.

\begin{coro}\label{cgroth}
Let $\cu$ be a smashing  triangulated category; let $t$ be a $t$-structure on it that is (compactly) generated by a set $\cp\subset \obj \cu^{\alz}$. 

Then the category $\hrt$ is Grothendieck abelian. Moreover, the 
 category $\cuz=\lan \cp \ra$ (see \S\ref{snotata}) is essentially small,  and the zeroth $t$-homology of 
 its objects give generators for $\hrt$.\footnote{Recall that a class $\cq\subset \obj \hrt$ is said to generate $\hrt$ 
 whenever for any non-zero $\hrt$-morphism $h$  there exists $Q\in \cq$ such that the homomorphism 
 $\hrt(Q,h)$ is non-zero as well. Since $\hrt$ is is closed with respect to small coproducts, if $\cq$ is essentially small then this condition is fulfilled if and only if any object of $\hrt$ is a quotient of a coproduct of elements of $\cp$.} 
 \end{coro}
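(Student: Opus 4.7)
The plan is to combine Theorem \ref{tab5} (AB5-ness of $\hrt$) with Proposition \ref{pkrause} (Kan extension of homological functors out of $\cuz:=\lan\cp\ra$) to produce an essentially small generating class for $\hrt$. First, Theorem \ref{tab5} gives that $\hrt$ is AB5. Moreover, the compact generation assumption makes $t$ smashing by Proposition \ref{pcgt}(II), so Proposition \ref{pcgt}(III.1, III.2) tells us that $\hrt$ is closed under small $\cu$-coproducts and that $H^t$ respects coproducts. The subcategory $\cuz=\lan\cp\ra$ is essentially small (a standard fact about triangulated subcategories generated by a set of objects) and consists of compact objects, since $\cp\subset\cu^{\alz}$ and $\cu^{\alz}$ is triangulated.

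I would then apply Proposition \ref{pkrause} with $H_0:=H^t|_{\cuz}:\cuz\to\hrt$ (the target being AB5 by the previous step), obtaining a homological functor $H:\cu\to\hrt$ extending $H_0$. The central step is to identify $H\cong H^t$ using the uniqueness assertion Proposition \ref{pkrause}(\ref{ikr8}). Two of the three characterizing conditions on $H$ are already known to hold for $H^t$---respecting coproducts and agreeing with $H_0$ on $\cuz$---so what remains is the condition that $H^t$ must annihilate $\cuz^\perp$. This is the only genuinely $t$-structural input in the argument, and it is short: since $\cuz=\lan\cp\ra$ one has $\cuz^\perp=\cap_{i\in\z}\cp^\perp[i]$, while the generation condition $\cu_{t\le 0}=(\cup_{i>0}\cp[i])^\perp$ gives $\cu_{t\le -1}=\cap_{j\ge 0}\cp^\perp[j]$; hence $\cuz^\perp\subset\cu_{t\le -1}$. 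Since $\cu_{t\ge 0}\perp\cu_{t\le -1}$ (by axiom (iii) of $t$), the adjunction of Proposition \ref{prtst}(\ref{itcan}) forces $\cu(L_tM,L_tM)=\cu(L_tM,M)=0$ for any $M\in\cu_{t\le -1}$, whence $L_tM=0$ and $H^t(M)=L_tM=0$, as required.

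Having identified $H\cong H^t$, the resolution (\ref{ekrause}) applied to an arbitrary $N\in\hrt\subset\obj\cu$ becomes an exact sequence $\coprod H^t(C^i_N)\to\coprod H^t(C^j_N)\to N\to 0$ in $\hrt$ (the coproducts being interpreted in $\hrt$ because $H^t$ respects them). Every $N\in\hrt$ is thus a quotient in $\hrt$ of a coproduct of objects $H^t(C)$ with $C\in\obj\cuz$, so the essentially small class $\{H^t(C):C\in\obj\cuz\}$ generates $\hrt$ in the sense of the footnote. Being AB5, cocomplete, and admitting a set of generators, $\hrt$ is Grothendieck abelian; the ``moreover'' claim is then immediate, since any full triangulated subcategory of $\cu$ containing $\cp$ contains $\cuz$. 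Within the corollary itself there is no real obstacle---essentially all of the genuine work is packaged into the prerequisites Theorem \ref{tab5} and Proposition \ref{pkrause}---and the only actual calculation needed is the one-line inclusion $\cuz^\perp\subset\cu_{t\le -1}$ noted above.
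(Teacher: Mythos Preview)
Your proof is correct and follows essentially the same route as the paper's: reduce to finding generators via Theorem \ref{tab5}, then invoke Proposition \ref{pkrause}(\ref{ikr8}) to identify $H^t$ with the Kan extension of its restriction to $\cuz$, so that the resolution (\ref{ekrause}) exhibits every object of $\hrt$ as a quotient of a coproduct of $H^t(C)$'s. The only difference is cosmetic: you spell out the verification that $H^t$ annihilates $\cuz^\perp$ (via the inclusion $\cuz^\perp\subset\cu_{t\le -1}$), whereas the paper leaves this implicit in its citation of Proposition \ref{pkrause}(\ref{ikr8}).
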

\begin{proof}
Since $\hrt$ is an AB5 abelian category according to Theorem \ref{tab5}, it suffices to verify the second part of the statement.

Next, the category $\cuz$ is essentially small by Lemma 3.2.4 of \cite{neebook}. Hence Proposition \ref{pkrause}(\ref{ikr8}) implies that the functor $H^t:\cu\to \hrt$ is 
the corresponding (left Kan) extension of its restriction to the subcategory $\cuz$. 
 Hence for any $M\in \obj \cu$ and a  family $C_M^j\in \obj \cuz$ as in (\ref{ekrause}) we obtain that $H^t(M)$ is an $\hrt$-quotient of  $\coprod H^t(C_M^j)$. Thus the class $H^t(\cuz)$ generates $\hrt$ indeed. 
\end{proof}

\begin{rema}\label{rstalk} 1. In 
 \S5.4--5.5 of \cite{bpure} the author  studied the 
 category $\hrt$ under the assumption that there exists a smashing category $\du$ that contains $\cuz\opp$ as a full subcategory of compact objects. This 
 extra condition allowed to establish (in Theorem 5.4.2 of ibid.) the existence of an exact conservative functor $\mathcal{S}:\hrt\to \ab$ that respects coproducts (this functor was constructed as the coproduct of so-called stalk functors; see Remark 5.5.4(1) of ibid. for the motivation for choosing the last term). Now, this additional assumption appears to be rather harmless (since it is fulfilled at least whenever $\cu$ "has a model"; see Corollary 5.5.3 of ibid.), whereas the existence of a functor $\mathcal{S}$ of this sort is not automatic for Grothendieck abelian categories. 

2.  The author suspects that $\hrt$ possesses a much smaller class of generators; see Remarks 5.4.3(2) and 5.1.4(I.2) of ibid.
\end{rema}

\section{On 
 (perfectly generated) weight structures and adjacent $t$-structures}\label{spgws}

In this section we define the so-called perfectly generated weight structures. We also  use them to prove that hearts of compactly generated $t$-structures possess injective cogenerators, thus finishing the proof of Theorem \ref{tgroth}.

In \S\ref{scoulim} we recall the notion of a countable  homotopy colimit of a chain of morphisms in a smashing triangulated category. We also study the properties of colimits of this sort; some of them appear to be new (though rather technical). Probably, most of this section can be skipped at the first reading.

In \S\ref{sbws} we recall some basics on  weight structures; this notion  is central for the current paper.

In \S\ref{sspgws} we recall the notion of perfectness for classes of objects, and prove that any perfect set generates a 
 weight structure. We also establish 
  some properties of weight structures obtained this way.

In \S\ref{sadjts} we consider our main 
 example of perfect sets: we prove that Brown-Comenetz duals of  the elements of any set $\cp\subset \obj \cu^{\alz}$ 
  give a perfect set in the category $\cu\opp$. 
	 The corresponding weight structure on $\cu$ is right adjacent to the $t$-structure $t$ generated by $\cp$; 
	 this enables us to prove that the category $\hrt$ has an injective cogenerator.

\subsection{On homotopy colimits in triangulated categories}
\label{scoulim}

We recall the basics of the theory  of countable (filtered)
homotopy colimits in triangulated categories (as introduced in \cite{bokne}; some more detail can be found in \cite{neebook}). 
We will consider colimits of this sort only in triangulated categories that are {\it countably smashing}, i.e., closed with respect to countable coproducts (moreover, for the purposes of the current paper only smashing categories are actual); 
so usually we will not mention this (important!) restriction explicitly. 

\begin{defi}\label{dcoulim}
For a sequence of objects $Y_i$ 
 of $\cu$ for $i\ge 0$ and maps $f_i:Y_{i}\to Y_{i+1}$  
we consider the morphism $a:\oplus \id_{Y_i}\bigoplus \oplus (-f_i): D\to D$ (we can define it since its $i$-th component  can be  factored  into a composition $Y_i\to Y_i\bigoplus Y_{i+1}\to D$).  Denote  a cone of $a$ by $Y$. We will write $Y=\hinli Y_i$ and call $Y$ a {\it homotopy colimit} of $Y_i$ (we will not consider any other homotopy colimits in this paper).

 Moreover, $\co(f_i)$ will be denoted by $Z_{i+1}$, and we set $Z_0=Y_0$. 
\end{defi}

\begin{rema}\label{rcoulim}
1. Note that these homotopy colimits are not really canonical and functorial in $Y_i$ since the choice of a cone is not canonical. They are only defined up to non-canonical isomorphisms; still this is satisfactory for our purposes.

2. 
 The  definition of $Y$ gives a canonical morphism $D\to Y$; respectively, we also have canonical morphisms $Y_i\to Y$.

3. By Lemma 1.7.1 of \cite{neebook},  a homotopy colimit of $Y_{i_j}$ is the same (up to an isomorphism) for any subsequence of $Y_i$. 
In particular, we can discard any (finite) number of first terms in $(Y_i)$.

4. Most of  our 
 difficulties with (these) homotopy colimits in a triangulated category $\cu$ 
 are caused by the fact that they are not true $\cu$-colimits; consequently, we have to make much effort to control the difference in properties. However, the reader that is willing to ignore these technical 
 problems can easily note that the central ideas for the arguments that concern   homotopy colimits in this paper are rather transparent, whereas  the details for Lemmas \ref{lcoulim}(\ref{ihc5}) and \ref{ldualt} and their applications may be difficult to understand at the first reading.  \end{rema}

Let us now recall a few more properties of this notion. 

\begin{lem}\label{lcoulim} 
Assume that $Y=\hinli Y_i$ (in $\cu$); denote by $c_i$ the canonical morphisms $Y_i\to Y$ mentioned in Remark \ref{rcoulim}(2), and let $M$ be an object of $\cu$.
For  an abelian category $\au$ we assume that $H'$ (resp. $H$) is a 
 (co)homological functor from $\cu$ into $\au$.

Then the following statements are valid.

\begin{enumerate}
\item\label{ihc1}
If $f_i=\id_M$ 
 for all $i\ge 0$ then $c_i\cong \id_M$ as well; respectively, $Y\cong M$.

\item\label{ihc3} If the object $\inli H'(Y_i)$ (resp. $\prli H(Y_i)$) exists in $\au$ then the morphisms $H'(c_i)$ (resp. $H(c_i)$) induce a canonical morphism $\inli H'(Y_i)\to H'(Y)$ (resp. $H(Y)\to \prli H(Y_i)$).

\item\label{ihc4} Assume that $H$ is a cp functor. Then the aforementioned morphism $H(Y)\to \prli H(Y_i)$ is epimorphic. Moreover, it is an isomorphism whenever $\au$ is an AB4* category and all the morphisms $H(f_i[1])$ are 
epimorphic for 
 $i\gg 0$.

 \item\label{ihc5} Assume that $H'$ is a wcc functor. Then the aforementioned morphism $ \inli H'(Y_i)\to H'(Y) $ is monomorphic. 

This morphism is also an isomorphism  if either 

 (i) 
for  $i\gg 0$ there exist objects $A$ and $A_i\in \obj \au$,  along with compatible isomorphisms $H'(Y_i[1])\cong A_i\bigoplus A$ and $H(f_i[1])\cong (0:A_i\to A_{i+1}) \bigoplus \id_A$; 

or (ii) $\au$ is an AB5 category.

Furthermore, in case (i) we have $ \inli H'(Y_i[1])\cong A$. 

\item\label{ihc6} Assume that for each $i\in \z$ we are given  morphisms $m_i:Y_i\to M$ such that $m_i=m_{i+1}\circ f_i$ for all $i\ge 0$;  
we will call any preimage $m$ of the system $(m_i)$ with respect to the (surjective) homomorphism $\cu(Y,M)\to \prli \cu(Y_i,M)$ given by assertion   \ref{ihc4} (applied for $H=\cu(-,M)$) a morphism {\it compatible} with $(m_i)$. 

Then for any wcc functor 
 $H':\cu \to \au$ the restriction of $H'(b)$ to $\inli H'(Y_i)$ (see assertion   \ref{ihc5}) is given by $\inli H'(m_i)$.
\end{enumerate}
\end{lem}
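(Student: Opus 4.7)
The six claims all follow from applying (co)homological functors to the defining distinguished triangle $D \xrightarrow{a} D \xrightarrow{b} Y \to D[1]$ of $Y = \hinli Y_i$ and unpacking the resulting long exact sequence. For assertion \ref{ihc1}, when $f_i = \id_M$ the map $a$ on $D = \coprod_{i \ge 0} M$ is the standard telescope map $(\iota_i - \iota_{i+1})_i$; a direct computation identifies its cone with $M$ via the codiagonal, under which each $c_i$ becomes $\id_M$. Assertion \ref{ihc3} is pure functoriality: the identity $b \circ a = 0$ forces $c_i = c_{i+1} \circ f_i$, so the $H'(c_i)$ form a compatible cocone (and the $H(c_i)$ a compatible cone), and one invokes the universal property of $\inli$ (respectively $\prli$) when it exists.

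For the core content of \ref{ihc4} and \ref{ihc5}, the cp/wcc condition identifies $H(D) \cong \prod H(Y_i)$ in the contravariant case (resp.\ $H'(D) \cong \coprod H'(Y_i)$ in the covariant case). Unwinding the definitions shows that $H(a)$ acts on $\prod H(Y_i)$ as $(\phi_i) \mapsto (\phi_i - H(f_i)(\phi_{i+1}))$, whose kernel is canonically $\prli H(Y_i)$; dually $H'(a)$ on $\coprod H'(Y_i)$ has cokernel canonically $\inli H'(Y_i)$. Inserting these identifications into the long exact sequence yields the epimorphism $H(Y) \twoheadrightarrow \prli H(Y_i)$ by exactness at $H(D)$, and the monomorphism $\inli H'(Y_i) \hookrightarrow H'(Y)$ by exactness at $H'(D)$.

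Upgrading these to isomorphisms reduces to controlling the adjacent position in the sequence: one needs $H(a[1])$ to be epi in \ref{ihc4} and $H'(a[1])$ to be mono in the isomorphism cases of \ref{ihc5}. For \ref{ihc4}, after discarding finitely many initial terms via Remark \ref{rcoulim}(3) to assume every $H(f_i[1])$ is epi, the AB4* hypothesis supplies the classical Mittag--Leffler short exact sequence $0 \to \prli H(Y_i[1]) \to \prod H(Y_i[1]) \xrightarrow{H(a[1])} \prod H(Y_i[1]) \to 0$, giving the needed surjectivity. Case (ii) of \ref{ihc5} is dual: in an AB5 category the coequalizer presentation of $\inli$ makes $H'(a)$, hence also $H'(a[1])$, automatically mono. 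Case (i) of \ref{ihc5} is an explicit calculation: the splitting $H'(Y_i[1]) \cong A_i \oplus A$ with $H'(f_i[1]) = 0 \oplus \id_A$ decomposes $H'(a[1])$ (after discarding initial terms) into a telescope on the $A_i$-summands, which is patently mono, and a shifted-identity piece on $A$ for which a direct componentwise check yields both mono-ness and $\inli H'(Y_i[1]) \cong A$.

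Finally, assertion \ref{ihc6} is a quick corollary: a compatible $m \colon Y \to M$ is precisely a preimage of $(m_i) \in \prli \cu(Y_i, M)$ under the surjection from \ref{ihc4} applied to $H = \cu(-, M)$, and compatibility translates to $m \circ c_i = m_i$. Applying $H'$ and using $H'(c_i)$ to identify the image of $H'(Y_i)$ inside $\inli H'(Y_i)$, one reads off $H'(m) \circ H'(c_i) = H'(m_i)$, so the restriction of $H'(m)$ to $\inli H'(Y_i)$ must equal $\inli H'(m_i)$ by the universal property. The chief obstacle I anticipate is the element-free verification of the Mittag--Leffler sequence in AB4* for \ref{ihc4}, which demands careful bookkeeping of sign conventions and of the direction in which each transition map points.
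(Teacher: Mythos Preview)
Your proposal is correct and follows essentially the same route as the paper: both arguments apply the (co)homological functor to the defining triangle, identify $H(a)$ (resp.\ $H'(a)$) with the standard shift map on the product (resp.\ coproduct), and then reduce the isomorphism statements in parts~\ref{ihc4} and~\ref{ihc5} to controlling $H(a[1])$ and $H'(a[1])$ via Mittag--Leffler/$\varprojlim^1$ vanishing and the AB5 or split-decomposition argument, respectively. The only cosmetic differences are that the paper cites Neeman's book (Lemma~1.6.6 for part~\ref{ihc1}, Remark~A.3.6 and Lemma~A.3.9 for the $\varprojlim^1$ computation) rather than re-deriving these, and in case~(i) of part~\ref{ihc5} it writes out the explicit inverse matrix for the constant-$A$ telescope piece rather than leaving it as a ``direct componentwise check.''
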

\begin{proof}
\ref{ihc1}. This is Lemma 1.6.6 of \cite{neebook}.

\ref{ihc3}. It obviously suffices to verify the homological part of the assertion, since the cohomological one is its dual. Thus we should prove that for any $i\ge 0$ we have $H'(c_i)=H'(c_{i+1})\circ H'(f_i)$. Since $H'$ is homological, for this purpose it suffices to recall that the morphism  $Y_i\to D$ induced by $a$ (see Definition \ref{dcoulim}) equals $\id_{Y_i}\bigoplus (-f_i)$. 

\ref{ihc4}. We  have a long exact sequence
  $$\dots\to H(D[1])\stackrel{H(a[1])}{\longrightarrow} H(D[1])\to  H(Y)\to H(D)\stackrel{H(a)}{\longrightarrow} H(D)\to\dots.$$

	 Since $H(D)\cong \prod H(Y_i)$, the kernel of $H(a)$ equals $\prli H(Y_i)$ (and this inverse limit exists in $\au$), and we obtain the first part of the assertion. 
	
	Next, Remark A.3.6 of \cite{neebook} yields that the cokernel of $H(a[1])$ equals the $1$-limit of the objects $H(Y_i[1])$.
By Remark \ref{rcoulim}(3)  we can assume that the homomorphisms $f[1]^*$ are surjective for all $i$. Hence the statement is given by Lemma A.3.9  of ibid.

\ref{ihc5}. Similarly to previous proof 
 we consider the long exact sequence
$$\dots\to H'(D)\stackrel{H'(a)}{\longrightarrow} H'(D)\to H'(Y)\to  H'(D[1]) \stackrel{H'(a[1])}{\longrightarrow} H'(D[1])  \to\dots.$$

Since $H'(D)\cong \coprod H'(Y_i)$, it easily follows that the cokernel of $H'(a)$ is $\inli H'(Y_i)$; this gives the first part of the assertion.

To prove its second part  we should verify that $H'(a[1])$ is monomorphic (if either of the two additional assumptions is fulfilled).
We will write  $B_i$ and $g_i$ for $H'(Y_i[1])$ and $H'(f_i[1])$, respectively, whereas the morphism $H'(a[1])$ (that  clearly can be expressed in terms of $\id_{B_i}$ and $g_i$) will be denoted by $h$.

If (i) is valid then Remark \ref{rcoulim}(3) enables us to assume that $B_i\cong A\bigoplus A_i$ and $g_i\cong \id_A\bigoplus 0$ for all $i\ge 0$. Moreover, the additivity of the object $\ke(h)$ with respect to direct sums of $(B_i,g_i)$ reduces its calculation to the following two cases: (1) $(A=0;\ g_i=0)$ and (2) $(A_i=0;\ g_i\cong \id_A)$.
In 
 case (1) $h$ is isomorphic to $\id_{\coprod B_i}$; hence it is monomorphic. In 
 case (2) $h$ is monomorphic as well since 
  the morphism matrix $$\begin{pmatrix}\id_A &\id_A &\id_A &\dots \\
0 & \id_A &\id_A &\dots\\
0 & 0 &\id_A &\dots\\
0 & 0 &0 &\dots\\
\dots & \dots &\dots &\dots \end{pmatrix}$$ gives the inverse morphism (cf. the proof of \cite[Lemma 1.6.6]{neebook}). 

Moreover, the additivity of direct limits in abelian categories implies that  $\inli(H'(Y_i[1])\cong A\bigoplus A'$, where $A'$ is the direct limit of $A_0\stackrel{0}{\to} A_1\stackrel{0}{\to}A_2\stackrel{0}{\to}\dots$; clearly, $A'=0$.

To prove version (ii) of the assertion note that  the composition of $H'(a[1])$ with the obvious monomorphism $\coprod_{i\le j} H'(Y_i[1])\to \coprod_{i\ge 0} H'(Y_i[1])$ is easily seen to be monomorphic for  each $j\ge 0$. If  $\au$ is an AB5 category then  it follows that the morphism  $H'(a[1])$ is monomorphic itself.

\ref{ihc6}. It obviously suffices to note that the composition $Y_i\to Y\to M$ equals $m_i$ and apply $H'$ to this commutative diagram for all $i\ge 0$.
\end{proof}

We will also need the following definitions.\footnote{The terminology we introduce is new; yet 
big hulls were essentially considered 
in (Theorem 3.7 of) \cite{postov}.} 

\begin{defi}\label{dses}
 1. A class $\cpt\subset \obj \cu$ will be called {\it strongly extension-closed} if it 
 contains $0$ and for any $f_i:Y_{i}\to Y_{i+1}$ such that $Y_0\in \cpt$ and $\co(f_i)\in \cpt$ for all $i\ge 0$ we have $\hinli_{i\ge 0} Y_i\in \cpt$ (i.e. $\cpt$ contains all possible cones of the corresponding distinguished triangle; note that these are isomorphic).

2. The smallest strongly extension-closed retraction-closed class of objects of $\cu$ that  contains a class $\cp\subset \obj\cu$ and is closed with respect to arbitrary 
 $\cu$-coproducts will be called the {\it strong extension-closure} of $\cp$.

3. We will write $\cocp$ either for the closure of $\cp$ with respect to $\cu$-coproducts 
 or  for the full subcategory of $\cu$ formed by these objects. 

Moreover, we will call  the class of the objects of $\cu$ that may be presented as homotopy limits of  $Y_i$ with $Y_0$ and $\co(f_i)\in \cocp$, 
the {\it naive big hull} of $\cp$.  
The class of all retracts of the elements of 
 this naive big hull  will be called the {\it big hull} of $\cp$.
\end{defi}

Now we prove a few simple properties of these notions. 

\begin{lem}\label{lbes} 
Let $\cp$ be a class of objects of $\cu$; denote its strong extension-closure by $\cpt$.

\begin{enumerate}
\item\label{iseses}
Then $\cpt$ is extension-closed in $\cu$; it contains the big hull of $\cp$.
	 
\item\label{isesperp}
Let $H$ be a cp functor (see Definition \ref{dsmash}(\ref{idcc}))  from $\cu$ into a AB4*-category $\au$, and assume that  the restriction of $H$ to $\cp$ is zero. Then $H$  kills $\cpt$ as well.

In particular, if for some $D\subset \obj \cu$ we have $ \cp\perp D$ then $\cpt\perp D$ also.

\item\label{isescperp}
Let $H'$ be a cc functor from $\cu$ into a AB5-category. Then $H'$ kills $\cpt$ whenever it kills $\cp$.

 Thus if  $D\subset \obj \cu^{\alz}$ 
 and $D\perp \cp$ then $D\perp \cpt$ as well.

\item\label{izs}  {\it Zero classes} 
 of arbitrary families of cp and cc functors (into AB4* and AB5 categories, respectively) are strongly extension-closed (i.e. for any cp functors $H_i$ and cc functors $H'_i$ of this sort the classes $\{M\in \obj \cu:\ H_i(M)=0\ \forall i\}$ and $\{M\in \obj \cu:\ H'_i(M)=0\ \forall i\}$ are strongly extension-closed).
\end{enumerate}
\end{lem}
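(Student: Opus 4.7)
I would prove the four parts in the logical order \ref{izs}, then \ref{isesperp} and \ref{isescperp}, and finally \ref{iseses}, since the killing claims follow essentially formally once \ref{izs} is available. For \ref{izs}, let $Z$ denote the zero class of whichever functor is under consideration and take a chain $f_i\colon Y_i\to Y_{i+1}$ with $Y_0\in Z$ and $\co(f_i)\in Z$ for all $i\ge 0$. Applying the functor to each triangle $Y_i\to Y_{i+1}\to\co(f_i)\to Y_i[1]$, the vanishing on $\co(f_i)$ forces $H(Y_{i+1})\to H(Y_i)$ to be monomorphic in the cp case, and $H'(Y_i)\to H'(Y_{i+1})$ to be epimorphic in the cc case; in either case an induction starting from $Y_0\in Z$ gives $H(Y_i)=0$ (resp.\ $H'(Y_i)=0$) for all $i$. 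In the cc case, Lemma \ref{lcoulim}(\ref{ihc5}) applied using the AB5 hypothesis identifies $H'(\hinli Y_i)$ with $\inli H'(Y_i)=0$. In the cp case, the same long exact sequences additionally force $H(f_i[1])\colon H(Y_{i+1}[1])\to H(Y_i[1])$ to be epimorphic (its cokernel injects into $H(\co(f_i))=0$), so the ``moreover'' part of Lemma \ref{lcoulim}(\ref{ihc4}) identifies $H(\hinli Y_i)$ with $\prli H(Y_i)=0$.

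For \ref{isesperp} and \ref{isescperp}, I would check that the zero class $Z$ of the relevant functor contains $\cp$ (by hypothesis), is retraction-closed (immediate from additivity), is closed under $\cu$-coproducts (since a cp functor converts coproducts to products and $\prod 0=0$, while a cc functor respects coproducts and $\coprod 0=0$), and is strongly extension-closed (by \ref{izs}); by the minimality of $\cpt$ among such classes, $\cpt\subset Z$, yielding the killing claim. The ``in particular'' consequence in \ref{isesperp} then follows by applying the statement to the cp functor $\cu(-,Y)\colon\cu\to\ab$ for each $Y\in D$ (this is cp for every $Y$), and the ``thus'' consequence in \ref{isescperp} by applying it to the cc functor $\cu(X,-)\colon\cu\to\ab$ for each $X\in D$ (which is cc precisely because $X$ is compact).

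For \ref{iseses}, given a distinguished triangle $A\to C\to B\to A[1]$ with $A,B\in\cpt$, I would take $Y_0=A$ and $Y_i=C$ for $i\ge 1$ with $f_0$ the first map of the triangle and $f_i=\id_C$ for $i\ge 1$; then $\co(f_0)=B$ and $\co(f_i)=0$ all lie in $\cpt$, while Remark \ref{rcoulim}(3) combined with Lemma \ref{lcoulim}(\ref{ihc1}) identifies $\hinli Y_i$ with $C$, so strong extension-closure gives $C\in\cpt$. For the big-hull containment, since $\cp\subset\cpt$ and $\cpt$ is closed under $\cu$-coproducts we have $\cocp\subset\cpt$; strong extension-closure then places the naive big hull inside $\cpt$, and retraction-closure promotes this to the big hull itself. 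The main technical obstacle is the cp half of \ref{izs}: vanishing of $H$ on $\hinli Y_i$ is not automatic from $H(Y_i)=0$ alone because of the $\lim^1$-type cokernel term appearing in the defining triangle of $\hinli Y_i$, and one must extract surjectivity of the shifted transition maps $H(f_i[1])$ from the very same long exact sequence in order to discharge this obstruction via Lemma \ref{lcoulim}(\ref{ihc4}).
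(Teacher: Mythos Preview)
Your proof is correct and uses essentially the same arguments as the paper: the key inputs are Lemma~\ref{lcoulim}(\ref{ihc4}) and~(\ref{ihc5}) for the vanishing on homotopy colimits, and the same stabilizing-sequence trick for extension-closure in part~\ref{iseses}. The only difference is organizational: you prove part~\ref{izs} first and deduce~\ref{isesperp} and~\ref{isescperp} from it via minimality of $\cpt$, whereas the paper proves~\ref{isesperp} and~\ref{isescperp} directly (the computation being identical to yours) and then declares~\ref{izs} immediate from them.
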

\begin{proof}
\ref{iseses}. For any  distinguished triangle $X\to Y\to Z$ for $X,Z\in \cpt$ the object $Y$ is the colimit of $X\stackrel{f}{\to} Y\stackrel{\id_Y}{\longrightarrow} Y \stackrel{\id_Y}{\longrightarrow} Y \stackrel{\id_Y}{\longrightarrow} Y\to \dots$; see Remark \ref{rcoulim}(3) and Lemma \ref{lcoulim}(\ref{ihc1}). Since
   a cone of $f$ is $Z$, whereas a cone of $\id_Y$ is $0$, 
	$\cpt$ is extension-closed indeed. It contains the big hull of $\cp$ by definition.

\ref{isesperp}. Since for any $d\in D$ the functor 
$H_d=\cu(-,d):\cu\to \ab$ converts arbitrary coproducts into products, it suffices to verify the first part of the statement.

Thus it suffices to verify that $H(Y)=0$ if $Y=\hinli Y_i$ and $H$ kills  cones of the connecting morphisms $f_i$.

Now, $H(Y_j)=\ns$ for any $j\ge 0$ (by obvious induction).  Next,  the  long exact sequence $$\dots  \to H(Y_{i+1}[1]) \stackrel{H(f_i[1])}{\longrightarrow} H(Y_i[1]) \to H(\co(f_i)) (=0) \to H(Y_{i+1}) \to H(Y_i)\to \dots  $$
gives the surjectivity of $H(f_i[1])$. Hence $H(Y)\cong \prli H(Y_i)=0$ according to Lemma \ref{lcoulim}(\ref{ihc4}).

\ref{isescperp}.  Once again, it suffices to verify the first part of the assertion. Similarly to the previous argument the result easily follows from  Lemma \ref{lcoulim}(\ref{ihc5}(ii)).

  \ref{izs}. 
	Immediate from the previous assertions. 
\end{proof}

We will also need the following lemma related to sequences of arrows.

\begin{lem}\label{ldualt}
Assume that  $h_i:M_{i}\to M_{i+1}$  for $i\ge 0$ is a sequence of $\cu$-morphisms, and for  $M\in \obj \cu$  we have connecting morphisms $g_i\in \cu(M,M_i)$ 
 such that $h_{i+1}\cong g_i\circ h_i$; take distinguished triangles $L_i\stackrel{b_i}{\longrightarrow}M \stackrel{g_i}{\longrightarrow}M_i\stackrel{f_i}{\longrightarrow} L_i[1]$ and $P_i\stackrel{a_i}{\longrightarrow} M_i\stackrel{b_i}{\longrightarrow} M_{i+1}\to P_i[1]$.  
Then there exists a system of morphisms $s_i:L_i\to L_{i+1}$ 
such that $b_{i+1}\cong s_i\circ b_i$. Moreover,  for $L=\hinli L_i$,  for any morphism $b:L\to M$ that is compatible with $(b_i)$ in the sense of Lemma \ref{lcoulim}(\ref{ihc6}), and any wcc functor $H:\cu\to \au$ the following statements are fulfilled.

1. $\co(s_i)\cong P_i$. 

2. 
 If $H(g_1)=0$ 
  then $H(b)$ is an epimorphism.

3. The morphism $H(b)$ is monomorphic whenever all the restrictions of $H(b_{i+1})$ to the images of $H(s_{i})$ are monomorphic  
and one of the following conditions are fulfilled: 
(i) $H(g_1)=0=H(g_1[1])$ and the restrictions of $H(b_{i+1}[1])$ to the images of $H(s_{i}[1])$ are monomorphic for $i\ge 1$ as well; 

 (ii) $\au$ is an AB5 category.  

4.  For any $i\ge 0$ the restriction of $H(b_{i+1})$ to the image of $H(s_{i})$ is monomorphic whenever $H(h_i[-1])=0$.

5. For any $i\ge 0$ if $H(a_i)$ is epimorphic then $H(h_i)=0$ and $H(g_{i+1})=0$. 

\end{lem}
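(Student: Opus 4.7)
The plan is to tackle the five parts in order, relying mainly on the octahedral axiom and long exact sequences. For the existence of $s_i$ satisfying $b_i \cong b_{i+1} \circ s_i$, I would complete the commutative square with sides $\id_M$ and $h_i$ (it commutes because $g_{i+1} = h_i \circ g_i$, reading the statement's composition relation in the only way that type-checks) to a morphism of triangles via (TR3), taking $s_i$ as the resulting third arrow $L_i \to L_{i+1}$.

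For Part 1, I would apply the octahedral axiom to the composition $L_i \xrightarrow{s_i} L_{i+1} \xrightarrow{b_{i+1}} M$, whose composite equals $b_i$. The octahedron produces a triangle $\co(s_i) \to \co(b_i) \to \co(b_{i+1}) \to \co(s_i)[1]$; since $\co(b_i) \cong M_i$ and $\co(b_{i+1}) \cong M_{i+1}$, and the middle map is identified with (a choice of) $h_i$, comparison with the defining triangle for $P_i$ gives $\co(s_i) \cong P_i$.

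Parts 2, 4, and 5 reduce to direct chases in long exact sequences. Part 5 is immediate from the $P_i$-triangle: $H(a_i)$ epic forces $H(h_i)=0$, whence $H(g_{i+1}) = H(h_i) \circ H(g_i) = 0$. For Part 2, exactness of $H(L_1) \xrightarrow{H(b_1)} H(M) \xrightarrow{H(g_1)} H(M_1)$ shows $H(b_1)$ is epic whenever $H(g_1)=0$, and compatibility (Lemma \ref{lcoulim}(\ref{ihc6})) gives $H(b_1) = H(b) \circ H(c_1)$, so $H(b)$ is epic as well. For Part 4, commutativity of the morphism of triangles produced in Part 1 yields $H(s_i) \circ H(f_i[-1]) = H(f_{i+1}[-1]) \circ H(h_i[-1])$; when $H(h_i[-1])=0$, the inclusion $\ke(H(b_i)) = \imm(H(f_i[-1])) \subseteq \ke(H(s_i))$ follows, and together with $H(b_i) = H(b_{i+1}) \circ H(s_i)$ this is precisely the monicity of $H(b_{i+1})|_{\imm(H(s_i))}$.

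The main work goes into Part 3. The strategy is to combine Lemma \ref{lcoulim}(\ref{ihc5})—which identifies $\inli H(L_i)$ with (a subobject of) $H(L)$—with the observation that the monicity hypotheses force the colimiting cocone map $\inli H(b_i)$ to be monic: an element of $H(L_i)$ annihilated by $H(b_i) = H(b_{i+1}) \circ H(s_i)$ is, by the assumed monicity on $\imm(H(s_i))$, killed by $H(s_i)$ itself and so represents zero in the directed colimit. Lemma \ref{lcoulim}(\ref{ihc6}) identifies the restriction of $H(b)$ to $\inli H(L_i)$ with $\inli H(b_i)$, so combining these gives monicity of $H(b)$. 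Case (ii) is then essentially direct via Lemma \ref{lcoulim}(\ref{ihc5})(ii). For case (i), the additional difficulty is to verify the decomposability hypothesis of Lemma \ref{lcoulim}(\ref{ihc5})(i) for the shifted system $(H(L_i[1]), H(s_i[1]))$: propagating $H(g_1)=0=H(g_1[1])$ to all $i \geq 1$ via $g_{i+1} = h_i \circ g_i$, the long exact sequence collapses to short exact sequences $0 \to H(M_i) \to H(L_i[1]) \to H(M[1]) \to 0$, and the shifted monicity assumption will be used to produce compatible splittings of the required form $A_i \oplus A$ with transitions $0 \oplus \id_A$. Securing the strict compatibility of these splittings is the technical crux I expect to be the main obstacle.
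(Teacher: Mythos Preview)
Your overall strategy matches the paper's, and Parts 2, 4, 5 are essentially identical to the paper's arguments (your Part 2 is in fact slightly slicker, using only $i=1$ and the factorisation $H(b_1)=H(b)\circ H(c_1)$ rather than passing to the limit over all $i$). Part 3 also follows the same route: reduce to monicity of $\inli H(b_i)$ via Lemma \ref{lcoulim}(\ref{ihc6}), handle case (ii) directly by Lemma \ref{lcoulim}(\ref{ihc5})(ii), and in case (i) produce the splittings required for Lemma \ref{lcoulim}(\ref{ihc5})(i) from the collapsed long exact sequences. The paper carries out exactly this, and your anticipated ``technical crux'' is resolved there by noting that $H(b_{i+1}[m])$ restricted to $\imm H(s_i[m])$ is both monic (by hypothesis) and epic onto $H(M[m])$ (since $H(b_{i+1}[m])\circ H(s_i[m])=H(b_i[m])$ is epic), hence an isomorphism; this furnishes the direct-summand $A=H(M[m])$ and shows $H(s_{i+1}[m])$ carries it isomorphically onto the next copy while killing the complement.

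There is, however, a genuine subtlety in your treatment of the construction of $s_i$ and Part 1. You first produce $s_i$ via (TR3), and then apply the octahedral axiom to the composition $L_i\xrightarrow{s_i}L_{i+1}\xrightarrow{b_{i+1}}M$. The octahedron then manufactures a map $M_i\to M_{i+1}$ as part of its fourth triangle, but there is no reason this map should coincide with the \emph{given} $h_i$; the axiom only asserts the existence of \emph{some} compatible map. Since (TR3)-fills are not unique, and $P_i$ is defined as the fibre of the specific morphism $h_i$, your identification ``the middle map is (a choice of) $h_i$'' is not justified, and the conclusion $\co(s_i)\cong P_i$ does not follow. The paper avoids this by applying the octahedral axiom in the other direction, to the factorisation $g_{i+1}=h_i\circ g_i$: here the three given triangles are those on $g_i$, $h_i$, $g_{i+1}$, and the octahedron \emph{produces} $s_i:L_i\to L_{i+1}$ together with the distinguished triangle $L_i\to L_{i+1}\to P_i\to L_i[1]$ and the relations $b_{i+1}\circ s_i=b_i$ and $s_i[1]\circ f_i=f_{i+1}\circ h_i$ in a single step. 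This simultaneously establishes existence, Part 1, and the commutativity you need in Part 4. Adopting this orientation of the octahedron fixes the gap without altering anything else in your argument.
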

\begin{proof} 
1. For all $i\ge 0$ we complete the commutative triangles $M\to M_{i}\to M_{i+1}$ to octahedral diagrams as follows:
\begin{equation}\label{eoct}
\xymatrix{M_{i+1}  \ar[dd]^{[1]} & & M \ar[ld]^{g_{i}} \ar[ll]^{g_{i+1}} \\ & M_i \ar[lu]^{h_i} \ar[rd]^{[1]}   & \\ P_i \ar[ru]^{a_i}
  \ar[rr]^{[1]} & & L_i\ar[uu]^{b_{i}}} 
\xymatrix{M_{i+1} \ar[rd]^{[1]}\ar[dd]^{[1]} & & M \ar[ll]^{g_{i+1}} \\ & L_{i+1} \ar[ru]^{b_{i+1}}\ar[ld]^{
} & \\ P_i\ar[rr]^{[1]} & & L_i\ar[lu]^{s_i}\ar[uu]^{b_{i}} }
\end{equation}
Consequently, we obtain the property 1 for this choice of $\{s_i\}$. Moreover, 
 we fix any morphism $b:L\to M$ that is compatible with $(b_i)$ (in the sense of Lemma \ref{lcoulim}(\ref{ihc6})).

2. 
Clearly, if $H(g_1)=0$ then $H(g_i)=0$ for all $i\ge 1$.

Next, the exact sequences 
\begin{equation}\label{eleso}
\dots\to H(L_i)\stackrel{H(b_i)}{\longrightarrow}H(M) \stackrel{H(g_i)}{\longrightarrow}H(M_i)\stackrel{H(f_i)}{\longrightarrow} H(L_i[1])\to \dots \end{equation}
 yield that $H(b_i)$ are epimorphic for $i\ge 1$. Lastly, 
 the first statement in Lemma \ref{lcoulim}(\ref{ihc5}) allows us to pass to the limit and conclude the proof. 

3. In version (i) (resp. (ii)) of our assertion we should prove that $H(b)$ is an isomorphism (resp. a monomorphism). Now, in case (ii) we have  $H(L)\cong \inli H(L_i)$ (see  Lemma \ref{lcoulim}(\ref{ihc5}(ii))). Since  $\inli H(L_i)\cong \inli \imm(H(s_{i}))$,   Lemma \ref{lcoulim}(\ref{ihc6}) gives the result in question. 

Next, in case (i) we clearly have $H(g_i[m])=0$ if $i\ge 1$ and $m$ equals $0$ or $1$; hence (\ref{eleso}) implies that 
 the corresponding $H(b_i[m])$ are epimorphic. Hence for any $i\ge 1$ and $m=0$ or $1$ we have $\imm H(s_i[m])\cong H(M[m])$, and the restriction of   $H(s_{i+1}[m])$ to $\imm H(s_i[m])$ is an isomorphism. Thus for any $i\ge 2$ the morphism $H(s_i[m])$ is isomorphic to $\id_{H(M)}\bigoplus 0:A_i^s\to A^s_{i+1}$ for certain $A^s_i\in \obj \au$. Hence applying Lemma \ref{lcoulim}(\ref{ihc5}(i)) (for $H'=H$ and also for $H'=H\circ [-1]$) 
 we obtain $H(L)\cong \inli H(L_i)\cong A$, and applying 
 Lemma \ref{lcoulim}(\ref{ihc6}) we conclude the proof.

4. The restriction of $H(b_{i+1})$ to the image of $H(s_{i})$ is monomorphic if and only if $H(s_{i})$ kills 
$\ke H(b_i)$. Next, $\ke H(b_i)=\imm (H(M_i[-1])\to H(L_i))$; thus we should check that the composition morphism $H(M_i[-1])\to H(L_{i+1})$ vanishes. Lastly, the octahedral axiom (also) says that   this composition can be factored through $H(h_i[-1])$, and we obtain the 
result in question.

5. The corresponding long exact sequence implies that $H(h_i)=0$ if and only if $H(a_i)$ is epimorphic. Next, if $H(h_i)=0$ then the morphism $H(g_i)$ is clearly zero as well. 
\end{proof}

\begin{rema}\label{r27}   Proposition 2.7 of \cite{postov} 
 gives a distinguished triangle $L\to M\to \hinli M_i\to L[1]$ whenever $\cu$ is a "stable derivator" triangulated category. We note that this additional assumption on $\cu$ is rather "harmless", and it can be used to simplify the proof of our lemma. However, it seems to be no way to avoid the assumptions similar to that in Lemma \ref{lcoulim}(\ref{ihc5}(i)) completely (for our purposes). 

2. Clearly, 
 instead of assuming that the morphisms $H(g_1)$ and $H(g_1[1])$ vanish 
 one can assume that  $H(g_i)$ and $H(g_i[1])$ vanish for 
 some $i>1$.\end{rema}

\subsection{Weight structures: basics}\label{sbws}
Let us recall the main definitions related to weight structures along with a few of their properties.

\begin{defi}\label{dwstr}

I. A couple of classes $\cu_{w\le 0},\cu_{w\ge 0}\subset\obj \cu$ 
will be said to define a weight structure $w$ on a triangulated category  $\cu$ if 
they  satisfy the following conditions.

(i) $\cu_{w\le 0}$ and $\cu_{w\ge 0}$ are 
retraction-closed in $\cu$ (i.e., contain all $\cu$-retracts of their objects).

(ii) {\bf Semi-invariance with respect to translations.}

$\cu_{w\le 0}\subset \cu_{w\le 0}[1]$, $\cu_{w\ge 0}[1]\subset
\cu_{w\ge 0}$.

(iii) {\bf Orthogonality.}

$\cu_{w\le 0}\perp \cu_{w\ge 0}[1]$.

(iv) {\bf Weight decompositions}.

 For any $M\in\obj \cu$ there
exists a distinguished triangle
$$LM\to M\to RM {\to} LM[1]$$
such that $LM\in \cu_{w\le 0} $ and $ RM\in \cu_{w\ge 0}[1]$.
\end{defi}

We will also need the following definitions.

\begin{defi}\label{dwso}
Let $i,j\in \z$; assume that a triangulated category $\cu$ is endowed with a weight structure $w$.

\begin{enumerate}
\item\label{idh}
The full subcategory $\hw$ of $ \cu$ whose objects are
$\cu_{w=0}=\cu_{w\ge 0}\cap \cu_{w\le 0}$ 
 is called the {\it heart} of 
$w$.

\item\label{id=i}
 $\cu_{w\ge i}$ (resp. $\cu_{w\le i}$, resp. $\cu_{w= i}$) will denote the class $\cu_{w\ge 0}[i]$ (resp. $\cu_{w\le 0}[i]$, resp. $\cu_{w= 0}[i]$).

\item\label{idwsmash} We will say that $w$ is {\it (co)smashing} if $\cu$ is (co)smashing and the class $\cu_{w\ge 0}$ (resp. $\cu_{w\le 0}$)  is (co)smashing in it.

\item\label{idrest}
Let $\du$ be a full triangulated subcategory of $\cu$.

We  say that $w$ {\it restricts} to $\du$ whenever the couple $(\cu_{w\le 0}\cap \obj \du,\ \cu_{w\ge 0}\cap \obj \du)$ is a weight structure on $\du$.

\item\label{idgenw} We will say that a class $\cp$ of objects of $\cu$ {\it generates}  $w$  whenever $\cu_{w\ge 0}=(\cup_{i>0}\cp[-i])\perpp$.

\item\label{idadj} 
 $w$ is said to be left (resp. right) adjacent to a $t$-structure $t$ on $\cu$ if $\cu_{w\ge 0}=\cu_{t\ge 0}$ (resp. $\cu_{w\le 0}=\cu_{t\le 0}$).

Moreover, if this is the case then we will also say that $t$ is right (resp. left) adjacent to $w$.

\item\label{idegw} $w$ is said to be {\it non-degenerate} if   $\cap_{i\in \z}\cu_{w\le i}=\cap_{i\in \z}\cu_{w\ge i}=\ns$  (cf. Definition \ref{dtstro}(3)).
\end{enumerate}
\end{defi}

\begin{rema}\label{rstws}

1. A  simple (and still  useful) example of a weight structure comes from the stupid filtration on the homotopy category of cohomological complexes $K(\bu)$ for an arbitrary additive  cohomological $\bu$ (it can also be restricted to bounded complexes; see Definition \ref{dwso}(\ref{idrest})). In this case $K(\bu)_{\wstu\le 0}$ (resp. $K(\bu)_{\wstu\ge 0}$) is the class of complexes that are
homotopy equivalent to complexes  concentrated in degrees $\ge 0$ (resp. $\le 0$); see Remark 1.2.3(1) of \cite{bonspkar} for more detail. 

2. A weight decomposition (of any $M\in \obj\cu$) is almost never canonical. 

Still for any $m\in \z$ the axiom (iv) gives the existence of a distinguished triangle \begin{equation}\label{ewd} w_{\le m}M\to M\to w_{\ge m+1}M\to (w_{\le m}M)[1] \end{equation}  with some $ w_{\ge m+1}M\in \cu_{w\ge m+1}$ and $ w_{\le m}M\in \cu_{w\le m}$; we will call it an {\it $m$-weight decomposition} of $M$.

 We will often use this notation below (even though $w_{\ge m+1}M$ and $ w_{\le m}M$ are not canonically determined by $M$); we will call any possible choice either of $w_{\ge m+1}M$ or of $ w_{\le m}M$ (for any $m\in \z$) a {\it weight truncation} of $M$. Moreover, when we will write arrows of the type $w_{\le m}M\to M$ or $M\to w_{\ge m+1}M$ we will always assume that they come from some $m$-weight decomposition of $M$.

3. In the current paper we use the ``homological convention'' for weight structures; 
it was originally introduced by J. Wildeshaus and  used in several preceding papers of the author.\footnote{Note also that this convention is compatible with the one used for weights of mixed complexes of \'etale sheaves in (\S5.1.5 of) \cite{bbd}; see Proposition 3.17 of \cite{brelmot}.} 
Note however that in \cite{bws} 
the ``cohomological convention'' was used. In the latter convention 
the roles of $\cu_{w\le 0}$ and $\cu_{w\ge 0}$ are essentially interchanged; being more precise, one uses the following notation:
  $\cu^{w\le 0}=\cu_{w\ge 0}$ and $\cu^{w\ge 0}=\cu_{w\le 0}$.
 
We also recall that D. Pauksztello has introduced weight structures independently 
 (in \cite{konk}); he called them co-t-structures. 
\end{rema}

Now we recall a collection of properties of weight structures.

\begin{pr}\label{pbw}
Let $\cu$ be a triangulated category endowed with a weight structure $w$. 
 Then the following statements are valid.

\begin{enumerate}
\item \label{idual}
The axiomatics of weight structures is self-dual, i.e., 
  the couple $(\cu_{w\ge 0}, \cu_{w\le 0})$ of objects of $\cu\opp$ gives   (the {\it opposite})  weight structure $w\opp$ on this category 
   (cf. Remark \ref{rtst}(1)).

\item\label{iort}
 $\cu_{w\ge 0}=(\cu_{w\le -1})^{\perp}$ and $\cu_{w\le 0}={}^{\perp} \cu_{w\ge 1}$.

\item\label{icoprod} $\cu_{w\le 0}$ is closed with respect to all coproducts that exist in $\cu$.

\item\label{iext} 
 $\cu_{w\le 0}$, $\cu_{w\ge 0}$, and $\cu_{w=0}$
are additive and extension-closed. 

\item\label{iwdext} For any distinguished triangle $M\to M'\to M''\to M[1]$  and any 
weight decompositions $LM\stackrel{a_{M}}{\longrightarrow} M\stackrel{n_{M}}{\longrightarrow} R_M\to LM[1]$ and $LM''\stackrel{a_{M''}}{\longrightarrow} M''\stackrel{n_{M''}}{\longrightarrow} R_M''\to LM''[1]$ there exists a commutative diagram 
$$\begin{CD}
LM @>{}>>LM'@>f>> LM''@>{}>>LM[1]\\
 @VV{a_M}V@VV{a_{M'}}V @VV{a_{M''}}V@VV{a_{M}[1]}V\\
M@>{}>>M'@>{}>>M''@>{}>>M[1]\\
 @VV{n_M}V@VV{n_{M'}}V @VV{n_{M''}}V@VV{n_{M}[1]}V\\
RM@>{}>>RM'@>{}>>RM''@>{}>>M[1]\end{CD}
$$
in $\cu$ whose rows are distinguished triangles and the second column is a weight decomposition (along with the first and the third one).

\item\label{igen} There exists at most one weight structure that is generated by a given class $\cp\subset\obj \cu$. Moreover, for a $t$-structure $t$ on $\cu$ there exists at most one weight structure $w_1$ (resp. $w_2$) that is left (resp. right) adjacent to $t$.

\item\label{iwdeg} Assume $\cu$ is a full strict triangulated subcategory of some $\du$; take $\eu$ to be the (triangulated; see Lemma \ref{lcloc})  subcategory of $\cu$-local objects in $\du$. 

Moreover, suppose that $w$ is generated by some $\cp\subset\obj \cu$ and there exists a functor right adjoint to the embedding $\cu\to \du$; take $\du_{w^{\du}\ge 0}$ to be the class of extensions of elements of 
 $\obj \eu$ by that of $\cu_{w\ge 0}$.   Then  $w^{\du}=(\cu_{w\le 0},\du_{w^{\du}\ge 0})$ is the weight structure generated by $\cp$ in   $\du$ (cf. the previous assertion). Consequently, $\hw^{\du}=\hw$.

\item\label{igenws} Assume that $w$ is smashing. Then the class $\cu_{w=0}$ is 
 smashing in $\cu$, and both $\cu$ and $\hw$ are {\it idempotent complete}, that is, every idempotent 
  endomorphism gives the projection of its domain onto its direct summand in 
  both of these categories.

Moreover, if $\cu$ is generated by a set of its objects as its own localizing subcategory then there exists 
 $P\in \cu_{w=0}$ such that any element of $\cu_{w=0}$ is a retract of a coproduct of 
 copies of $P$. 

\item\label{iexadj} Assume that the category 
 $\cu$ satisfies the Brown representability condition (see Definition \ref{dsmash}(\ref{idbrown})) and 
$w$ is  
smashing. Then there exists a 
$t$-structure $t$ on $\cu$ that is 
 right adjacent to $w$, and the heart $\hrt$ is equivalent (via the corresponding Yoneda-type functor) to the category of those 
 functors from $\hw\opp$ into $\ab$ that respect $\hw\opp$-products.  

\item\label{injadj} Assume that  $t$ is a  $t$-structure  on $\cu$ that is  left adjacent to $w$ and $\hw$ is idempotent complete (cf. assertion \ref{igenws}). Then the category $\hrt$ has enough injectives, and 
 the functor $H^t$ (see Proposition \ref{prtst}(\ref{itho}))  
 restricts to an equivalence of $\hw$ with  the subcategory of injective objects of $\hrt$. 
\end{enumerate}
\end{pr}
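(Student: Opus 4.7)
The plan hinges on the observation that, per Definition~\ref{dwso}(\ref{idadj}), ``$t$ left adjacent to $w$'' means $\cu_{w\le 0}=\cu_{t\le 0}$, and hence $\cu_{w\le n}=\cu_{t\le n}$ for all $n\in\z$ by shifting. In particular, any $M\in \cu_{w=0}$ lies in $\cu_{t\le 0}$, and extension-closedness of $\cu_{w\le 0}$ applied to the rotated $t$-decomposition triangle $R_tM[-1]\to L_tM\to M$ (where $R_tM\in \cu_{t\le -1}\subset \cu_{w\le 0}$) yields $L_tM\in \hrt$ and $H^t(M)=L_tM$. The engine for the whole argument is the adjunction of Proposition~\ref{prtst}(\ref{itcan}): since $L_t$ is right adjoint to $\cu_{t\ge 0}\hookrightarrow \cu$, for all $A\in \cu_{t\ge 0}$ one has $\cu(A,H^t(M))=\cu(A,M)$, and in particular $\hrt(A,H^t(M))=\cu(A,M)$ for $A\in \hrt$.

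Full faithfulness and injectivity follow quickly. For $M,M'\in \cu_{w=0}$, specializing $A=H^t(M)$ and applying $\cu(-,M')$ to the triangle $H^t(M)\to M\to R_tM$, the orthogonality $\cu_{w\le -1}\perp \cu_{w\ge 0}$ (applied to $R_tM, R_tM[-1]\in \cu_{w\le -1}$ against $M'\in \cu_{w\ge 0}$) gives $\hrt(H^t(M),H^t(M'))\cong \cu(M,M')=\hw(M,M')$. For injectivity of $H^t(M)$, given a short exact $0\to A\to B\to C\to 0$ in $\hrt$ (i.e., a distinguished triangle with all three objects in $\hrt$), the long exact sequence of $\cu(-,M)$ reduces surjectivity of $\hrt(B,H^t(M))\to \hrt(A,H^t(M))$ to the vanishing of $\cu(C[-1],M)$; and this holds because $C[-1]\in \cu_{t\le -1}=\cu_{w\le -1}$ while $M\in \cu_{w\ge 0}$, by the same orthogonality.

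The remaining ingredients are enough injectives and essential surjectivity. For $N\in \hrt$, I use that $N[1]\in \cu_{t\le 1}=\cu_{w\le 1}$: taking a weight decomposition $LN_1\to N[1]\to RN_1$ with $LN_1\in \cu_{w\le 0}$ and $RN_1\in \cu_{w\ge 1}$, extension-closedness of $\cu_{w\le 1}$ applied to the rotation $N[1]\to RN_1\to LN_1[1]$ forces $RN_1\in \cu_{w=1}$, so $M:=RN_1[-1]\in \cu_{w=0}$; shifting back gives a triangle $LN_1[-1]\to N\to M\to LN_1$, and since $LN_1[-1]\in \cu_{t\le -1}$ kills $L_t$ (via the adjunction and the axiom $\cu_{t\ge 0}\perp \cu_{t\le -1}$), the long exact sequence obtained by applying the homological functor $H^t$ (see Proposition~\ref{prtst}(\ref{itho})) yields the desired monomorphism $N\hookrightarrow H^t(M)$ in $\hrt$. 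Essential surjectivity is then formal: for $I\in \hrt$ injective, this mono $I\hookrightarrow H^t(M)$ splits, producing an idempotent of $H^t(M)$ in $\hrt$ that by full faithfulness comes from an idempotent of $M$ in $\hw$; idempotent completeness of $\hw$ splits it as $M=M_1\oplus M_2$, and applying the additive functor $H^t$ realizes $I\cong H^t(M_1)$.

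I expect the subtlest step to be the enough-injectives construction, since it is where the identification $\cu_{w\le 0}=\cu_{t\le 0}$ must interact non-trivially with the weight decomposition of a shift of $N$; the other steps are essentially formal consequences of the adjunction $\hrt(A,H^t(M))=\cu(A,M)$ combined with the two orthogonalities $\cu_{w\le 0}\perp \cu_{w\ge 1}$ and $\cu_{t\ge 0}\perp \cu_{t\le -1}$.
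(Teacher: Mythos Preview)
Your proposal addresses only assertion \ref{injadj}; the remaining parts \ref{idual}--\ref{iexadj} are not touched. That said, the paper's own treatment of those parts consists almost entirely of citations to \cite{bws}, \cite{bvt}, \cite{bwcp}, and \cite{bvtr}, with only \ref{igen} and \ref{igenws} given short in-text arguments, so there is little substantive to compare there.

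For assertion \ref{injadj} your argument is correct and takes a genuinely different route from the paper. The paper's proof is a one-line citation to \cite[Theorem 5.3.1(I.1)]{bvtr} (applied in $\cu\opp$), with a subsequent remark that that theorem in turn rests on \cite[Lemma 2(1)]{zvon}. You instead give a self-contained direct argument using only the adjunction of Proposition \ref{prtst}(\ref{itcan}), the orthogonality axioms for $w$ and $t$, extension-closedness from assertion \ref{iext}, and the idempotent-completeness hypothesis on $\hw$. The payoff is transparency: one sees explicitly why adjacency forces $H^t(M)\cong L_tM$ for $M\in\cu_{w=0}$, why the resulting identification $\hrt(A,H^t(M))\cong\cu(A,M)$ for $A\in\hrt$ makes $H^t|_{\hw}$ fully faithful with injective values, and how a $0$-weight decomposition of $N[1]$ manufactures a monomorphism $N\hookrightarrow H^t(M)$ with $M\in\cu_{w=0}$. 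The paper's approach buys brevity and situates the result within the broader orthogonality framework of \cite{bvtr}; your approach is more elementary and stands on its own without those external references.
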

\begin{proof}

 Assertions \ref{idual}--\ref{iwdext} were proved in \cite{bws} (cf.  Remark 1.2.3(4) of \cite{bonspkar} and pay attention to Remark \ref{rstws}(3) above!). 

\ref{igen}. Immediate from  assertion \ref{iort}.

\ref{iwdeg}. 
  See Proposition 3.2(2,5) of  \cite{bvt} (cf. Proposition \ref{prtst}(\ref{itdeg})). 

\ref{igenws}. Assertion \ref{iort} implies that the class $\cu_{w\ge 0}$ is smashing in $\cu$. Recalling Definition \ref{dwso}(\ref{idwsmash}) we obtain that $\cu_{w=0}=\cu_{w\ge 0}\cap \cu_{w\le 0}$ is smashing as well.

Next, $\cu$   is smashing; hence  any idempotent endomorphism splits in  it by Remark 1.6.9 of \cite{neebook}. Since both $\cu_{w\le 0}$ and  $\cu_{w\ge 0}$ are retraction-closed in $\cu$, the same is true for $\cu_{w=0}$. Hence the category $\hw$ is idempotent complete as well.

Lastly, 
  Proposition 2.3.2(9) of \cite{bwcp} gives the existence of $P\in \cu_{w=0}$ such that any element of $\cu_{w=0}$ is a retract of a coproduct of 
 copies of $P$ immediately.

 \ref{iexadj}.  This is Theorem 3.2.3(I) of \cite{bvtr}.
 
 \ref{injadj}. 
  This  is a particular case  of \cite[Theorem 5.3.1(I.1)]{bvtr} (if we apply it to $\cu\opp$). 
\end{proof}

\begin{rema}
1. Moreover,  the proof of \cite[Theorem 5.3.1(I.1)]{bvtr} heavily relied on  Lemma 2(1) of \cite{zvon}, and this lemma implies  a significant part of  Proposition \ref{pbw}(\ref{injadj}) immediately.

2. Loc. cit. also says that the restriction of $H^t$ to $\hw$ is compatible in the obvious way with the Yoneda-type functor  $\hw\to \adfu(\hrt\opp,\ab)$, $M\mapsto \cu(-,M)$ (cf. Proposition \ref{pbw}(\ref{iexadj})).
\end{rema}

\subsection{On perfectly generated weight structures}\label{sspgws}

Now we recall the notion of a (weakly) perfect class of objects.

\begin{defi}\label{dpcl}
Let $\cp$ be a class of objects of $\cu$.

1. We will say that a $\cu$-morphism $h$ is {\it $\cp$-null} (resp. {\it $\cp$-epic})  whenever for all $M\in \cp$  we have $H^M(h)=0$ (resp. $H^M(h)$ is surjective), where 
 $H^M=\cu(M,-):\cu\to \ab$.

We will write  $\cpn$ for the class of all $\cp$-null morphisms.

2. Assume that $\cu$ is smashing. Then we will say that 
 $\cp$ is {\it (countably) perfect} if the class $\cpn$ 
 is closed with respect to (countable) $\cu$-coproducts. 
\end{defi}

\begin{rema}\label{requivdef} 1. Our definition of perfect classes essentially coincides with the one used in \cite{modoi}.

 Moreover, combining  (the obvious) Lemma \ref{lper}(\ref{il2}) below with Proposition \ref{pcgt}(I.3) (applied in the dual form) 
 one obtains the following: $\cp$ is (countably) perfect if and only if the coproduct of any (countable) family of $\cp$-epic morphisms is $\cp$-epic; hence $\cp$ is  countably perfect if and only if it fulfils  condition (G2)  in Definition 1 of 
 \cite{kraucoh}. 

2. Actually, the author does not know  any examples of countably perfect classes that are not perfect. Thus the reader can 
  assume that all the countably perfect classes mentioned below are just perfect.

3. The class of $\cp$-null morphisms is 
 not necessarily shift-stable  in contrast to the main examples of the paper  \cite{christ} where this notion was introduced.\end{rema}

Let us recall a few well-known facts related to perfect classes.

\begin{lem}\label{lper}
Let $\cu$ be a smashing triangulated category, $\cp\subset \obj \cu$, and $\cu'$ is the localizing subcategory of $\cu$ generated by $\cp$.

\begin{enumerate}

\item\label{il1} Then $\cp\perpp=\{N\in \obj \cu:\ \id_N\in \cpn\}$. Consequently, if  $\cp$ is (countably) perfect then  the class $\cp\perpp$ is smashing (resp. closed with respect to countable $\cu$-coproducts).

\item\label{il2}
In a $\cu$-distinguished triangle $M\stackrel{h}{\to}N \stackrel{f}{\to}Q\stackrel{g}{\to} M[1]$ the morphism $h$  is  $\cp$-epic if and only if $f$ is $\cp$-null.

\item\label{il3}
 The  class  $\ooo=(\cup_{i\in \z} \cp[i])^\perp$ equals 
  the one of $\cu'$-local objects; consequently,  the corresponding full subcategory $\du$ of $\cu$ is triangulated (cf. Lemma \ref{lcloc}). 
    
    Furthermore, $(\cup_{i\in \z} \cp[i])^ {\perp_{\cu'}}=\ns$.
    
\item\label{il4}    
Assume that $\cp$ is a (countably) perfect set. Then $\cp$ is (countably) perfect in $\cu'$; hence  the category $\cu'$ is {\it perfectly generated} in the sense of \cite[Definition 1]{kraucoh}. Consequently, $\cu'$ satisfies the Brown representability property 
     and  there exists an exact right adjoint $i^*$ to the embedding $i:\cu'\to \cu$ (cf.  Proposition \ref{pcgt}(V)).   
     
     Furthermore, if the class $\ooo$ (see assertion \ref{il3}) is zero then $\cu'=\cu$. 
    
  \item\label{il4c} Assume that   $\cu$ satisfies the Brown representability property. Then $\cu$ is cosmashing. 
   
   \item\label{il5}   
  The class $\cp[j]$ is (countably) perfect for any $j\in \z$ whenever $\cp$ is.
 
 \item\label{il6}   
 Let classes $\cp_i\subset \obj \cu$ for $i\in I$ be (countably) perfect.
Then $\cup_{i\in I}\cp_i$ is (countably) perfect as well.    
    
 \item\label{il7}       
   If all elements of $\cp$ are compact then $\cp$ is perfect. 
   \end{enumerate}
   \end{lem}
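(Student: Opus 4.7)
My plan is to dispatch the lemma part by part, since almost every item reduces to an unwinding of the definitions. For \ref{il1}, the equality is tautological: $\id_N\in \cpn$ says $\cu(P,N)\stackrel{(\id_N)_*}{\longrightarrow} \cu(P,N)$ vanishes for every $P\in \cp$, and this map is the identity, so the condition is equivalent to $\cu(P,N)=0$ for all $P\in \cp$. The consequence then follows because if $\cp$ is (countably) perfect and $\{N_\alpha\}\subset \cp\perpp$ is a (countable) family, then $\id_{\coprod N_\alpha}=\coprod \id_{N_\alpha}$ lies in $\cpn$, so $\coprod N_\alpha\in \cp\perpp$. Part \ref{il2} is immediate from the long exact sequence produced by applying $\cu(M,-)$ for $M\in \cp$: we get $\cu(M,M')\stackrel{h_*}{\longrightarrow}\cu(M,N)\stackrel{f_*}{\longrightarrow}\cu(M,Q)$, so $f_*=0$ iff $h_*$ is surjective. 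Parts \ref{il5}, \ref{il6}, \ref{il7} I would handle by rote: \ref{il5} uses that $[-j]$ is an auto-equivalence commuting with coproducts and identifying $\cp[j]$-null morphisms with $\cp$-null ones; \ref{il6} is just the observation that a morphism is $(\cup_i\cp_i)$-null iff it is $\cp_i$-null for each $i$; \ref{il7} follows because $\cu(P,-)$ preserves coproducts for compact $P$, so $\cu(P,\coprod f_\alpha)=\coprod \cu(P,f_\alpha)=0$.

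For \ref{il3}, my plan is to fix $M\in \ooo$ and consider the full subcategory of those $X\in \cu$ with $X\perp M$. Since the contravariant representable $\cu(-,M)$ is cohomological and automatically converts $\cu$-coproducts into products, this subcategory is triangulated and closed under $\cu$-coproducts; as it contains $\cp[i]$ for all $i\in \z$, it contains the localizing subcategory $\cu'$ generated by $\cp$, showing that $M$ is $\cu'$-local. The reverse inclusion is immediate from $\cp[i]\subset \obj \cu'$, and Lemma \ref{lcloc} then gives triangulatedness of the subcategory of $\cu'$-local objects. For the "furthermore", any $N\in (\cup_{i\in \z}\cp[i])^{\perp_{\cu'}}\cap \obj \cu'$ is $\cu'$-local by what was just shown, so $\cu'(N,N)=0$ and therefore $N=0$.

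The main work is in \ref{il4}. I would first verify that perfectness of $\cp$ transfers from $\cu$ to $\cu'$: since $\cu'$ is a full subcategory of $\cu$ closed under coproducts, a $\cu'$-coproduct of morphisms coincides with their $\cu$-coproduct, and $\cp$-nullness is a condition on the same Hom groups. Combined with the "furthermore" of \ref{il3} (which is Krause's generation condition (G1)) and the reformulation in Remark \ref{requivdef}(1) (identifying countable perfectness with Krause's (G2)), $\cu'$ is perfectly generated in the sense of \cite[Definition 1]{kraucoh}, so Krause's main theorem yields Brown representability for $\cu'$. The right adjoint $i^*$ is then obtained by applying this representability on $\cu'$ to the cp cohomological functor $Y\mapsto \cu(i(Y),X)$ for each $X\in \cu$. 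For the final claim, if $\ooo=\ns$, then the standard localization triangle $i(i^*X)\to X\to R\to i(i^*X)[1]$ associated to the adjunction has $R$ orthogonal to $i(\cu')$ and hence $R\in \ooo=\ns$, so $X\cong i(i^*X)\in \cu'$. Part \ref{il4c} is a direct Brown-representability argument: for any family $\{X_i\}_{i\in I}$ in $\cu$, the functor $M\mapsto \prod_i \cu(M,X_i)$ is cohomological and cp (being a product of representable cohomological cp functors), hence representable, and the representing object serves as the $\cu$-product $\prod X_i$. The principal obstacle is the invocation of \cite{kraucoh} in \ref{il4}, where one must match the perfectness hypotheses to Krause's conditions; everything else is genuinely routine bookkeeping.
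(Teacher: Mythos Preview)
Your proposal is correct and follows essentially the same route as the paper. The only cosmetic differences are that for \ref{il4} you spell out the construction of $i^*$ and the localization-triangle argument for $\cu'=\cu$ where the paper simply cites Theorem 8.4.4 of \cite{neebook} and the corollary in \cite[\S1]{kraucoh}, and for \ref{il4c} you unpack the representability argument that the paper delegates to Proposition 8.4.6 of \cite{neebook}.
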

\begin{proof}
Assertions \ref{il1}--\ref{il2} and \ref{il5}--\ref{il7} are obvious.

\ref{il3}. 
 $\ooo$ is clearly stable with respect to shifts and extensions; hence the corresponding subcategory $\du$ is triangulated indeed. 
Next, the class $\perpp\obj\du$ is  extension-closed and  stable  with respect to shifts as well. Since it is also smashing, $\cu'\subset {}\perpp\ooo$; hence $\ooo=(\obj \cu')^\perp$ indeed. 
Lastly,  $(\obj \cu')^{\perp}\cap \obj \cu'=\ns$; thus  $(\cup_{i\in \z} \cp[i])^ {\perp_{\cu'}}=\ns$. 

\ref{il4}. Since $\cu'$ is a smashing subcategory of $\cu$, $\cp$ is (countably) perfect in $\cu'$ as well. Next, the previous assertion implies that the set $\cp$  satisfies condition (G1) of \cite[Definition 1]{kraucoh} in the category $\cu'$. As we have just noted, condition (G2) of loc. cit. is fulfilled for $\cp$ as well by assertion 2; hence  we can apply Theorem A of ibid. to obtain that $\cu'$ satisifes the  Brown representability property indeed. Thus  $i^*$ exists by Theorem 8.4.4 of \cite{neebook}. 
Lastly, if $\ooo=\ns$ then 
 $\cu=\cu'$ by the corollary in  \cite[\S1]{kraucoh}. 

\ref{il4c}. See Proposition 8.4.6 of 
\cite{neebook}. 
\end{proof}

Now we prove 
 the central theorem of the paper.

\begin{theo}\label{tpgws}
Let $\cp$ be a countably perfect 
set 
of  objects of a smashing category $\cu$; denote $\cup_{i\le 0}\cp[i]$ by $\cp'$.

1. Then   the couple $w=(L,R)$, where $L$ is 
 the big hull of $\cp'$ and $R=\cp'{}^\perp[-1]$,  is a countably smashing weight structure on $\cu$. 

Moreover, $L$ equals the strong extension-closure of $\cp'$. 

2.  Take $\cu'$ to be the localizing subcategory of $\cu$ generated by $\cp$ and  $\ooo=(\cup_{i\in \z}\cp[i])^{\perp_{\cu}}$. 

Then $\cp$ generates a  weight structure $w'$ in the category $\cu'$, 
    $\cu'_{w'\le 0}=L$, 
     $R$ equals the class of extensions of elements of 
            $\ooo$   by that of  
            $\cu'_{w'\ge 0}$, 
             and  $\hw'=\hw$.

3. Assume that $\cp$ is a perfect set. Then $w$ is smashing and  there exists 
 $P\in \cu_{w=0}$ such that any element of $\cu_{w=0}$ is a retract of a coproduct of 
 copies of $P$. 
\end{theo}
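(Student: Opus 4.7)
To establish part 1, I would verify the four axioms of a weight structure for $(L,R)$. Retraction closure of $L$ is built into the definition of the big hull, and $R=\cp'\perpp[-1]$ is retraction-closed because right-orthogonal classes are. The chain $\cp'[-1]\subset\cp'\subset\cp'[1]$ (forced by $\cp'=\cup_{i\le 0}\cp[i]$) yields, by monotonicity of the big hull and of $\perpp$, the semi-invariances $L\subset L[1]$ and $R[1]\subset R$. For the orthogonality $L\perp R[1]=\cp'\perpp$, fix $N\in\cp'\perpp$: the contravariant cp functor $\cu(-,N)$ vanishes on $\cp'$ by definition, Lemma \ref{lbes}(\ref{isesperp}) extends this vanishing to the strong extension-closure of $\cp'$ (which contains the naive big hull), and retraction-closure of the zero class gives vanishing on all of $L$.

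The core step is the construction of weight decompositions. Given $M\in\obj\cu$, set $M^{(0)}=M$ and inductively form $A_i=\coprod_{Q\in\cp',\,f\in\cu(Q,M^{(i)})}Q$, the tautological ($\cp'$-epic) morphism $A_i\to M^{(i)}$, and $M^{(i+1)}$ as its cone. Put $M^{(\infty)}=\hinli M^{(i)}$. Each connecting morphism $h_i:M^{(i)}\to M^{(i+1)}$ is $\cp'$-null by the $\cp'$-epicity of $A_i\to M^{(i)}$; countable perfectness ensures $\coprod_i h_i$ is again $\cp'$-null, so the structure endomorphism $a=\id_D-\coprod h_i$ of $D=\coprod M^{(i)}$ from Definition \ref{dcoulim} satisfies $\cu(Q,a)=\id$ for every $Q\in\cp'$; the same holds for $\cu(Q,a[1])$ since $\cp'$ is closed under negative shifts and hence $h_i[1]$ is $\cp'$-null too. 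The long exact sequence of $\cu(Q,-)$ on $D\stackrel{a}{\to}D\to M^{(\infty)}\to D[1]$ then collapses to $\cu(Q,M^{(\infty)})=0$, so $M^{(\infty)}\in\cp'\perpp=R[1]$. Iterated octahedra applied via Lemma \ref{ldualt} identify the fiber of the canonical map $M\to M^{(\infty)}$ with a homotopy colimit whose initial term is $A_0\in\cocp'$ and whose successive cones are the $A_i\in\cocp'$, placing this fiber in the naive big hull $\subset L$ and giving the required weight decomposition $LM\to M\to M^{(\infty)}\to LM[1]$.

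The moreover clause of part 1 follows quickly: Lemma \ref{lbes}(\ref{isesperp}) together with Proposition \ref{pbw}(\ref{iort}) gives that the strong extension-closure of $\cp'$ lies in $\perpp R[1]=L$, while conversely any $M\in L$ has $\cu(M,RM)=0$ by orthogonality, so its weight decomposition splits and exhibits $M$ as a retract of $LM$ in the naive big hull; the big hull and the strong extension-closure thus both equal $L$. Countable smashingness of $w$ holds because $L$ is closed under arbitrary coproducts (construction) and $\cp'\perpp$ under countable coproducts (Lemma \ref{lper}(\ref{il1})). For part 2, run the same construction inside $\cu'$, where $\cp$ remains countably perfect by Lemma \ref{lper}(\ref{il4}) and all objects produced by the construction already lie in $\cu'$; Proposition \ref{pbw}(\ref{iwdeg}) — applicable because the embedding $\cu'\to\cu$ admits a right adjoint by Lemma \ref{lper}(\ref{il4}) — identifies the resulting $w'$ with the claimed structure and yields $\hw'=\hw$. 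For part 3, full perfectness of $\cp$ upgrades $\cp'\perpp$ from countably smashing to smashing (Lemma \ref{lper}(\ref{il1})), so $w$ becomes smashing; the existence of $P\in\cu_{w=0}$ follows by applying Proposition \ref{pbw}(\ref{igenws}) to $w'$ on the perfectly generated category $\cu'$, noting that $L\subset\cu'$ and the triangle description of $R$ force $\cu_{w=0}=\cu'_{w'=0}$.

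The principal difficulty is the vanishing $\cu(Q,M^{(\infty)})=0$ for $Q\in\cp'$: one cannot simply invoke the identity $\cu(Q,\hinli Y_i)=\inli\cu(Q,Y_i)$, since $Q$ is not assumed compact and Lemma \ref{lcoulim}(\ref{ihc5}) only supplies a monomorphism in that direction. The resolution bypasses any interchange between $\cu(Q,-)$ and coproducts entirely, using perfectness only through the identity $\cu(Q,\coprod h_i)=0$ for $\cp'$-null $h_i$, which forces the structure map $a$ to act as the identity on $\cu(Q,D)$ and collapses the long exact sequence.
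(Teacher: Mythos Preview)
Your computation that $M^{(\infty)}\in\cp'^\perp$ is correct and genuinely elegant: writing the structure endomorphism as $a=\id_D-s$ with $s$ built from the $\cp'$-null maps $h_i$, and using countable perfectness of $\cp'$ to conclude $\cu(Q,a)=\id$, is a clean way to bypass the paper's coherent-functor apparatus entirely.

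The gap is in the next step. You claim that ``iterated octahedra applied via Lemma \ref{ldualt} identify the fiber of the canonical map $M\to M^{(\infty)}$'' with $\hinli L_i$. Lemma \ref{ldualt} does not say this. It produces the tower $L_i$ with $\co(s_i)\cong A_i$, the object $L=\hinli L_i$, and a morphism $b:L\to M$ compatible with the $b_i$; it says nothing about $\co(b)$. In particular there is no identification $\co(b)\cong M^{(\infty)}$, and Remark \ref{r27} notes explicitly that a triangle $L\to M\to\hinli M_i\to L[1]$ is only available under a stable-derivator hypothesis, which is not assumed here. So your candidate weight decomposition $F\to M\to M^{(\infty)}$ has $M^{(\infty)}\in R[1]$ but you have not shown $F\in L$; conversely the paper's candidate $L\to M\to RM$ has $L$ in the naive big hull but your perfectness trick does not apply to $RM=\co(b)$, since the connecting maps $s_i:L_i\to L_{i+1}$ are not $\cp'$-null.

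The paper resolves this by staying with $RM=\co(b)$ and proving $\cp'\perp RM$ via the coherent-functor-valued homological functor $H^{\cp'}:\cu\to\hatc$: this functor is wcc precisely because $\cp'$ is countably perfect, and Lemma \ref{ldualt}(2,3(i),4,5) then shows $H^{\cp'}(b)$ is epic and $H^{\cp'}(b[1])$ is monic. The reason one cannot simply use $\cu(Q,-)$ in place of $H^{\cp'}$ here is the one you yourself flag at the end: $\cu(Q,-)$ is not wcc, so Lemma \ref{lcoulim}(\ref{ihc5}) and Lemma \ref{ldualt}(3) do not apply to it. Your trick sidestepped this for the colimit of the $M^{(i)}$, but there appears to be no analogous shortcut for the colimit of the $L_i$ or for the fiber of $M\to M^{(\infty)}$.
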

\begin{proof}
1. Denote the strong extension-closure of $\cp'$ by $L'$; clearly, $L'$ contains $L$.

Since $\cp'\perp R$, for any $N\in R$ the cp functor 
$H_N=\cu(-,N)$ 
 kills $L'$ according to Lemma \ref{lbes}(\ref{isescperp}). Hence $L'\perp R[1]$.

Next, $L$ is retraction-closed by definition, and obviously $R$ is  retraction-closed as well.

Now suppose that for any object $M$ of $\cu$ there exists a decomposition triangle 
\begin{equation}\label{edpg} LM\stackrel{b}{\to} M\stackrel{a}{\to} RM\stackrel{f}{\to} LM[1]\end{equation}
 with $LM\in L$ and $RM\in R[1]$. Then $(L,R)$ will give a weight structure just by definition, and one can apply Proposition \ref{pbw}(\ref{iort}) to obtain $L=L'$. 

So let us fix $M$ and construct a decomposition of the form (\ref{edpg}).
The idea is to apply Lemma \ref{ldualt}; 
 our argument is also related to the   proof of \cite[Theorem 4.5.2(I)]{bws} and to the construction of  crude cellular towers in \S I.3.2 of \cite{marg}.

 We construct a certain sequence of $M_k\in \obj \cu$ for $k\ge 0$ by induction in $k$ starting
from $M_0=M$. 
Assume  that $M_k$ (for some $k\ge 0$) is  constructed; then we take $P_k=\coprod_{(P,f):\,P\in \cp',f\in \cu(P,M_k)}P$; $M_{k+1}$ is a cone of the morphism $\coprod_{(P,f):\,P\in \cp',f\in \cu(P,M_k)}f:P_k\to M_k$.
Then compositions of the morphisms $h_k:M_{k}\to M_{k+1}$ given by this construction yields morphisms $g_i:M\to M_i$ for all $i\ge 0$.

We apply Lemma \ref{ldualt} 
 and obtain the existence of connecting morphisms $0=L_0\stackrel{s_0}{\longrightarrow}L_1\stackrel{s_1}{\longrightarrow}L_2\stackrel{s_2}{\longrightarrow}\dots$; we set $LM=\hinli L_i$. Moreover, we have a compatible system of morphisms $b_i:L_i\to M$ (cf. the formulation of that lemma) and we  choose  $b: LM\to M$ to be compatible with  $(b_k)$ (see Lemma \ref{lcoulim}(\ref{ihc6})). 
 We complete $b$ to a distinguished triangle $LM\stackrel{b}{\to} M\stackrel{a}{\to} RM\stackrel{f}{\to} LM[1]$;
it will be our candidate for a weight decomposition of $M$.

 Since $\co (s_i)\cong P_i$, the object $LM$ belongs to the naive big hull of $\cp'$  by the definition of this class.
It remains to prove that $RM\in R[1]$, i.e., that $\cp'\perp RM$. 
Let us apply an argument from the proof  of \cite[Theorem A]{kraucoh} (cf. also Remark \ref{rigid}(1) below).

We write $\cocpp$ for the full subcategory of $\cu$ formed by the closure of $\cp'$ with respect to coproducts.  
Following \cite{kraucoh} (see also \cite[Definition 5.1.3]{neebook} and \cite{auscoh}) 
we consider the full subcategory $\hatc\subset  
\adfu((\cocpp)^{op},\ab)$ 
 of {\it coherent functors}. 
We recall (see \cite{kraucoh}) that a  functor $H: (\cocpp)^{op}\to \ab$ 
is said to be coherent whenever there exists a $\adfu((\cocpp)^{op},\ab)$-short exact sequence $\cocpp(-,X)\to \cocpp(-,Y)\to H\to 0$, where $X$ and $Y$ are some objects of $\cocpp$ (note that this  is a projective resolution of $H$ in  $\adfu((\cocpp)^{op},\ab)$; see \cite[Lemma 5.1.2]{neebook}).

 According to \cite[Lemma 2]{kraucoh}, the category $\hatc$ is abelian; it has coproducts according to Lemma 1 of ibid. Since each morphism of (coherent) functors is compatible with some morphism of their (arbitrary) projective resolutions, a $\hatc$-morphism is zero (resp. 
 epimorphic) if and only if it 
 vanishes (resp. epimorphic) in $\adfu((\cocpp)^{op},\ab)$.

Next, take the Yoneda correspondence $\cu\to  \adfu((\cocpp)^{op},\ab)$  that maps $M\in \obj \cu$ into the restriction of $\cu(-,M)$ to $\cocp'$; it gives a  homological functor $H^{\cp'}:\cu\to \hatc$  (see Lemma 3 of ibid.). Now, $\cp'$ is countably perfect according to Lemma \ref{lper}(\ref{il5},\ref{il6}); thus Lemma 3 of ibid.  implies that $H^{\cp'}$ is a wcc functor. 
 $H^{\cp'}$ also respects arbitrary ${\cocpp}$-coproducts (very easy; see Lemma 1 of ibid.).
Lastly, our discussion of zero and surjective $\hatc$-morphisms clearly yields 
 for a  $\cu$-morphism $h$ that $H^{\cp'}(h)$ is zero (resp. epimorphic) 
   if and only if $h$ is a $\cocp'$-null morphism (resp. a $\cocp'$-epic one). The latter conditions is obviously fulfilled if and only if  $h$ is a $\cp'$-null morphism (resp. a $\cp'$-epic one).

Now we prove that $RM\in R[1]$ using the notation introduced in  Lemma \ref{ldualt}. 
As we have just proved, $RM\in R[1]$ whenever $H^{\cp'}(R)=0$. Hence the long exact sequence  $$
\to H^{\cp'}(L)\stackrel{H^{\cp'}(b)}{\longrightarrow}  H^{\cp'}(M) 
\to   H^{\cp'}(R)\to   H^{\cp'}(L[1])\stackrel{H^{\cp'}(b[1])}{\longrightarrow}  H^{\cp'}(M[1])\to
 $$
reduces the assertion to the epimorphness of $H^{\cp'}(b)$ along with the monomorphness of $H^{\cp'}(b[1])$.
Since $\cp'[-1]\subset \cp'$, the morphism $H^{\cp'}(a_i[j])$ is epimorphic (essentially by construction) for all $i,g\ge 0$.
Applying this observation in the case $i=j=0$ along with Lemma \ref{ldualt}(5,2) we obtain that $H^{\cp'}(b)$ is epimorphic. 

It remains to apply part 3(i) of the lemma to verify that $H^{\cp'}(b[1])$ is monomorphic; so we take $H=H^{\cp'}\circ [1]$. Thus we should check $H^{\cp'}(g_1[1])=0=H^{\cp'}(g_1[2])$ and also  that  $H^{\cp'}(h_i)=0=H^{\cp'}(h_i[1])$ for all $i\ge 0$ (see part 4 of the lemma). Thus combining part 5 of the lemma with the aforementioned surjectivity of $H^{\cp'}(a_i[j])$  we obtain that $w$ is a weight structure. 
 Lastly, Lemma \ref{lper}(\ref{il4}) implies that $w$ is  countably smashing.

2.  Lemma \ref{lper}(\ref{il4}) says that 
 $\cp$ is countably perfect in $\cu'$ and 
   there exists a right adjoint to the embedding $i:\cu'\to \cu$. 

Applying assertion 1 to the category of $\cu'$ we obtain that $\cp$ generates a weight structure $w'$ on it. Lastly, we apply Proposition \ref{pbw}(\ref{iwdeg}) to obtain that
 $\cu'_{w'\le 0}=L$, 
     $R$ equals the class of extensions of elements of 
            $\ooo$   by that of  $\cu'_{w'\ge 0}$, 
             and  $\hw'=\hw$ indeed.

3. Lemma \ref{lper}(\ref{il1}) implies that $w$ is smashing. 
By assertion 2, it remains  to verify the existence of $P$ in the case $\cu=\cu'$. In this case $P$ is given by Proposition \ref{pbw}(\ref{igenws}).
\end{proof}

\begin{rema}\label{rigid}  
1. The 
 author was inspired to apply coherent functors in this context by  \cite{salorio}; 
  yet 
the proof of Theorem 2.2 of ibid. (where coherent functors are applied to the construction of $t$-structures) appears to contain a gap.\footnote{An argument  even more closely related to our one was used in the proof of \cite[Lemma 2.2]{modoi}; yet the assumptions of that lemma appear to require  a correction.}
 It appears that applying 
  a similar argument to $\cp'=\cup_{i\ge 0}\cp$, where $\cp\subset \obj \cu$ is a general (countably) perfect set of objects, 
 one can (only) obtain a "weak $t$-structure" on $\cu$, i.e., for any $M\in \obj \cu$ there exists a distinguished triangle 
 $L\to M\to R\to L[1]$  such that $L$ belongs to the big hull of $\cp'$ and $R\in \cp'{}\perpp[1]$.

The author wonders whether this result can be improved, and also whether weak $t$-structures can be "useful".\footnote{Note however that {\it weak weight structures} (one replaces the orthogonality axiom in Definition \ref{dwstr}   by $\cu_{w\le 0}\perp\cu_{w\ge 2}$) were essentially considered in \cite{bsosn} (cf. Remark  2.1.2 of ibid.), in 
Theorem 3.1.3(2,3) of \cite{bsnew}, in \S3.6 of \cite{brelmot}, and in (Remark 6.3(4) of) \cite{bwvn}.}  

2. The case $\cp=\cp[1]$ of our theorem 
 is closely related to the proof of \cite[Theorem A]{kraucoh}. Respectively, we could have avoided citing loc. cit. in the proof of Lemma \ref{lper}(\ref{il4}).

3. We will say that a 
 weight structure is {\it perfectly generated} if it can be obtained by means of our theorem, i.e., if it is generated by a countably perfect set of objects.
 Note that Theorem \ref{twgws}(III.2) below states that any smashing weight structure on a  well generated triangulated category (see Definition \ref{dbecomp}(\ref{idpg})) is perfectly generated.

Clearly, instead of assuming that $\cp$ is a (countably perfect) set in our theorem it  suffices to assume that $\cp$ is 
 essentially small.

4. It is worth noting that in the setting of Theorem \ref{tpgws} 
 the category $\cu'$ is equivalent to the Verdier localization of $\cu$ by  $\du$, where $\obj \du=(\cup_{i\in \z}\cp[i])\perpp$ (see Lemma \ref{lwdeg}(\ref{ile3}) below).

5. In Theorem \ref{tsymt} (cf. also Theorem \ref{tpsymt})  
 we will study a family of examples for Theorem \ref{tpgws} that is  constructed using "symmetry"; this will yield some new results on $t$-structures. The idea to relate $t$-structures to symmetric sets and Brown-Comenetz duals comes from \cite{salorio} as well; however the author doubts that one can get a "simple description" of a $t$-structure obtained using arguments of this sort (cf. Corollary 2.5 of ibid.). 
\end{rema}


We also describe a certain "join" operation.

\begin{coro}\label{cwftw}
1. Assume that $\{\cp_i\}$ is a set of (countably) perfect  
sets of objects of $\cu$. Then the couple $w=(\cu_{w\le 0}, \cu_{w\ge 0})$ is a (countably) smashing weight structure on $\cu$, where $\cu_{w\le 0}$ is the big hull of $\cup_{j\ge 0,i} \cp_i[-j]$ and $\cu_{w\ge 0}=\cap_{j\ge 1,i} (\cp_i\perpp[-j])$. 

2. Assume that  $\{w_i\}$ is a set of perfectly generated weight structures (see Remark \ref{rigid}(3)) on $\cu$. 
Then the couple $w=(\cu_{w\le 0}, \cu_{w\ge 0})$ is a weight structure, where   $\cu_{w\le 0}$ is the big hull of $\cup_i \cu_{w_i\le 0}$ and  $\cu_{w\ge 0}=\cap_i \cu_{w_i\ge 0}$. Moreover, $w$ is perfectly generated; 
 it is  smashing whenever all $w_i$ are.

\end{coro}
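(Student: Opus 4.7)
\medskip

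\noindent\textbf{Proof proposal.}

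For assertion~1, the plan is to set $\cp=\cup_i \cp_i$ and apply Theorem~\ref{tpgws} directly. Since $\{\cp_i\}$ is indexed by a set and each $\cp_i$ is a set, $\cp$ is a set as well; Lemma~\ref{lper}(\ref{il6}) shows that $\cp$ is (countably) perfect whenever all the $\cp_i$ are. Writing $\cp'=\cup_{j\le 0}\cp[j]$, the reindexing $k=-j$ gives $\cp'=\cup_{k\ge 0,i}\cp_i[-k]$, which matches the class whose big hull defines $\cu_{w\le 0}$ in the statement. On the orthogonality side, for any object $M$ one has $M\in(\cp_i[-j])\perpp$ if and only if $M\in\cp_i\perpp[-j]$; hence $\cp'\perpp=\cap_{j\ge 0,i}\cp_i\perpp[-j]$ and consequently $\cp'\perpp[-1]=\cap_{k\ge 1,i}\cp_i\perpp[-k]$, which is exactly $\cu_{w\ge 0}$ as claimed. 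Theorem~\ref{tpgws}(1) then yields a countably smashing weight structure, and if all $\cp_i$ are perfect then so is their union, so Theorem~\ref{tpgws}(3) upgrades this to a smashing weight structure.

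For assertion~2, by Remark~\ref{rigid}(3) each $w_i$ is generated by some countably perfect set $\cp_i$ of objects of $\cu$; apply assertion~1 to the family $\{\cp_i\}$ to produce a (countably smashing) weight structure $w$ which, by construction, is perfectly generated by $\cup_i\cp_i$. It remains to identify $\cu_{w\le 0}$ and $\cu_{w\ge 0}$ with the classes given in the statement. For the coaisle, Theorem~\ref{tpgws}(1) applied to each $\cp_i$ says $\cu_{w_i\ge 0}=\cap_{k\ge 1}\cp_i\perpp[-k]$, so $\cap_i\cu_{w_i\ge 0}=\cap_{i,k\ge 1}\cp_i\perpp[-k]$, which matches assertion~1. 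For the aisle, since $\cp_i\subset\cu_{w_i\le 0}$ and $\cu_{w_i\le 0}$ is closed under $[-1]$ (by Definition~\ref{dwstr}(ii)), one has $\cup_{i,j\ge 0}\cp_i[-j]\subset\cup_i\cu_{w_i\le 0}$, and so the big hull of the former is contained in the big hull of the latter. Conversely, Proposition~\ref{pbw}(\ref{iort}) gives $\cu_{w\le 0}={}\perpp\cu_{w\ge 1}={}\perpp(\cap_i\cu_{w_i\ge 1})$, and each $\cu_{w_i\le 0}\perp\cu_{w_i\ge 1}\supset\cap_i\cu_{w_i\ge 1}$, so $\cup_i\cu_{w_i\le 0}\subset\cu_{w\le 0}$; since $\cu_{w\le 0}$ is itself a big hull (of $\cup_{i,j\ge 0}\cp_i[-j]$) and big hulls are closed under the operations used to form them, the big hull of $\cup_i\cu_{w_i\le 0}$ is contained in $\cu_{w\le 0}$. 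The two inclusions combine to give the desired identification.

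The final clause on smashing can be handled either via Theorem~\ref{tpgws}(3) (when we happen to know that $\cup_i\cp_i$ is perfect, not just countably perfect), or more uniformly by noting that an intersection of classes closed under $\cu$-coproducts is again closed under $\cu$-coproducts: if each $w_i$ is smashing then $\cu_{w\ge 0}=\cap_i\cu_{w_i\ge 0}$ is smashing as well, and hence so is $w$. The only genuinely non-trivial step is the identification of the aisle in assertion~2 through the double-inclusion argument sketched above; this is where one needs to remember that $\cu_{w\le 0}$ simultaneously enjoys the ``synthetic'' description as a big hull from Theorem~\ref{tpgws} and the ``orthogonal'' description from Proposition~\ref{pbw}(\ref{iort}), and it is precisely the interplay of these two presentations that makes the join operation well defined.
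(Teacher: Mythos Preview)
Your approach matches the paper's: for part~1 you form $\cp=\cup_i\cp_i$, invoke Lemma~\ref{lper}(\ref{il6}), and apply Theorem~\ref{tpgws}; for part~2 you pick generating sets $\cp_i$, apply part~1, identify the coaisle directly, and run a double inclusion for the aisle. The paper does exactly this, and your treatment of the smashing clause (intersection of smashing classes is smashing) is verbatim the paper's.

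One imprecision worth flagging, since you single it out as the non-trivial step: the assertion that ``big hulls are closed under the operations used to form them'' is not immediate from Definition~\ref{dses}(3), because the naive big hull of a class $S$ requires the cones to lie in $\underline{\coprod}S$, not merely in the big hull of $S$. What actually makes your containment work is that $\cu_{w\le 0}$ (from part~1) equals the \emph{strong extension-closure} of $\cup_{i,j\ge 0}\cp_i[-j]$ by the ``Moreover'' clause of Theorem~\ref{tpgws}(1), and strong extension-closures are by definition closed under the relevant operations (and are retraction-closed and smashing). The paper phrases the same step slightly differently, via Lemma~\ref{lbes}(\ref{isesperp}): since $\cup_i\cu_{w_i\le 0}\perp\cu_{w\ge 1}$, the strong extension-closure (hence the big hull) of $\cup_i\cu_{w_i\le 0}$ is contained in ${}\perpp\cu_{w\ge 1}=\cu_{w\le 0}$. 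Either justification closes your argument cleanly.
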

\begin{proof}

1. $\cup_{i\in I} \cp_i$ is a  (countably) perfect set according to  Lemma \ref{lper}(\ref{il6}). 
 Hence $w$ is a (countably) smashing weight structure according to Theorem \ref{tpgws}(1,3). 

2. 
We choose countably perfect generating sets $\cp_i$ for all $w_i$. According to the previous assertion, the couple $(\cu_{w'\le 0}, \cu_{w'\ge 0})$ is a weight structure on $\cu$, where $\cu_{w'\le 0}$ is the big hull of $\cup_{j\ge 0,i} \cp_i[-j]$ and $\cu_{w'\ge 0}=\cap_{j\ge 1;i} (\cp_i\perpp[-j])$. 

Now we compare $w$ with $w'$. Since $\cp_i$ generate $w_i$,  $\cu_{w\ge 0}$ equals $\cu_{w'\ge 0}$. Next,  $\cu_{w\le 0}\perp \cu_{w\ge 0}[1]$ according to Lemma \ref{lbes}(\ref{isesperp}). Since $\cu_{w\le 0}$ contains $\cu_{w'\le 0}$, these classes are equal. 

Thus $w$ is a perfectly generated weight structure. It is smashing if all $w_i$ are; indeed,   $\cu_{w\ge 0}$ is smashing 
 since it is the intersection of smashing classes of objects of $\cu$.
\end{proof}

\begin{rema}\label{revenmorews} 
Part 2 of our corollary  gives a certain "join" operation on perfectly generated $t$-structures on $\cu$  (in particular, we obtain a monoid). 
 Note moreover that the join of any class  of  smashing  weight structures 
  is smashing as well if exists.
\end{rema}

\subsection{On the relation to adjacent (compactly generated) $t$-structures}\label{sadjts}

 To 
  construct weight structures adjacent to compactly generated $t$-structures we will use the following definitions.
 
 \begin{defi}\label{dsym}
Let $\cp$ and $\cp'$ be subclasses of $\obj \cu$, $P\in \obj \cu$.

\begin{enumerate}

\item\label{isym}
We will say that $\cp$ is {\it symmetric} to $\cp'$ if 
 $\cpn$ (see 
 Definition \ref{dpcl}(1)) coincides with  the class of {\it $\cp'$-conull} morphisms, that is, with the class of those $h\in \mo(\cu)$ such that $\cu(h,P')=0$ for every $P'\in \cp'$.

\item\label{ibcomo}
We will call an object of $\cu$ the {\it Brown-Comenetz dual} of $P$ and denote it by $\hat{P}$ if it represents 
 the functor $M\mapsto  \ab(\cu(M,-),\q/\z):\cu\opp\to \ab$.
 \end{enumerate}
 \end{defi}
 
 Now we prove some statements related to symmetry in the form sufficient to establish Lemma \ref{lpcgt}. Some stronger results can be found in 
  Theorem  \ref{tpsymt} below.

\begin{theo}\label{tsymt}
Assume that  $\cu$ is smashing and $\cp\in 
\obj \cu$.   

I.1. If $\cp$ is symmetric to 
  some class of objects of $\cu$ then  $\cp$ is perfect.
  
 2. If   $\hat{P}$ exists for any $P\in \cp$ then $\cp$ is symmetric to the class $\hat{\cp}=\{\hat{P}:\ P\in \cp\}$. 

 II. Assume in addition that $\cu$ satisfies the Brown representability property and all elements of $\cp$  
 are compact. 
 
 1. If  $P\in \cp$    then the Brown-Comenetz dual   $\hat{P}$ exists in it.\footnote{It appears that  this statement originates from \S2 of \cite{kraucoh}; cf. Theorem B of loc. cit. for an important application of this argument.}
 Consequently, $\cp$ is symmetric to  $\hat{\cp}$. 
 
  Moreover, the category $\cu^{op}$ is smashing and $\hat{\cp}$ perfect in it. 
 
 2. Suppose  furthermore  
   that $\cp$ is a set.  
  Denote by $w^{op}$ the weight structure 
  on 
   $\cu\opp$ that is generated by  $\hat{\cp}$; see 
    Theorem \ref{tpgws}. 
    Then the opposite weight structure $w$ on $\cu$ 
  (see Proposition \ref{pbw}(\ref{idual}))  is cosmashing and the class $\cu_{w=0}$ is cosmashing in $\cu$.
  Moreover,  $w$ is  right adjacent to the $t$-structure $t$ generated by $\cp$ on $\cu$  (as provided by Proposition \ref{pcgt}(IV)). 
  
   3.   
 Furthermore, the functor $H^t$ 
   gives an equivalence of $\hw$ with the subcategory of injective objects of $\hrt$. Moreover, 
  there  exists $I_w\in \cu_{w=0}$  such that any 
element of  $\cu_{w=0}$ is a retract of a product of copies of $I_w$, and $H^t(I_w)$ is an injective cogenerator of $\hrt$. 


\end{theo}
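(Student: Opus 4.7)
The approach is to use Brown--Comenetz duality to transport the hypothesis of Theorem \ref{tpgws} from $\cu$ to $\cu\opp$ and then dualize. For I.1, if $\cp$ is symmetric to $\cp'$, the $\cp$-null morphisms are precisely the $\cp'$-conull ones; for a family $\{h_i\}$ in this class, $\cu(\coprod h_i,P')=\prod\cu(h_i,P')=0$ for each $P'\in\cp'$, so $\coprod h_i$ is again $\cp'$-conull, showing that $\cp$ is perfect. For I.2, since $\q/\z$ is an injective cogenerator of $\ab$, the vanishing of $\cu(P,h)$ is equivalent to that of $\ab(\cu(P,h),\q/\z)$, and the defining property of $\hat{P}$ identifies the latter with $\cu(h,\hat{P})$; hence the classes of $\cp$-null and $\hat{\cp}$-conull morphisms coincide.

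For Part II.1, for each compact $P\in\cp$ the functor $F_P\colon X\mapsto\ab(\cu(P,X),\q/\z)$ from $\cu\opp$ to $\ab$ is cohomological (as $\cu(P,-)$ is homological and $\ab(-,\q/\z)$ is exact) and converts $\cu$-coproducts to $\ab$-products (by compactness of $P$); Brown representability then yields $\hat{P}$, after which symmetry to $\hat{\cp}$ follows from I.2. Since Brown representability forces $\cu$ to be cosmashing (Lemma \ref{lper}(\ref{il4c})), $\cu\opp$ is smashing. Perfectness of $\hat{\cp}$ in $\cu\opp$ is then I.1 applied inside $\cu\opp$: the symmetry in $\cu$ dualizes to symmetry of $\hat{\cp}$ with $\cp$ inside $\cu\opp$, resting on the observation that $\cu$-products preserve the $\cp$-null property since $\cu(P,\prod h_i)=\prod\cu(P,h_i)$.

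For Parts II.2 and II.3, I apply Theorem \ref{tpgws}(1,3) to the smashing category $\cu\opp$ with the perfect set $\hat{\cp}$, obtaining a smashing weight structure $w\opp$ on $\cu\opp$ generated by $\hat{\cp}$ together with an object $P\in\cu\opp_{w\opp=0}$ of which every element of $\cu\opp_{w\opp=0}$ is a retract of a $\cu\opp$-coproduct of copies. Taking the opposite via Proposition \ref{pbw}(\ref{idual}) produces $w$ on $\cu$; the class $\cu_{w\le 0}=\cu\opp_{w\opp\ge 0}$ is closed under $\cu$-products, so $w$ is cosmashing, while $\cu_{w=0}$ is cosmashing by dualizing Proposition \ref{pbw}(\ref{igenws}) applied to $w\opp$. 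To check right adjacency of $w$ to the $t$-structure $t$ generated by $\cp$, I compute: $X\in\cu\opp_{w\opp\ge 0}$ iff $\cu(X,\hat{P}[i])=0$ for all $P\in\cp$ and $i>0$, and the natural isomorphism $\cu(X,\hat{P}[i])\cong\ab(\cu(P[i],X),\q/\z)$ together with the cogenerator property of $\q/\z$ identifies this condition with $X\in\cu_{t\le 0}$. For II.3, $\hw$ is idempotent complete via Proposition \ref{pbw}(\ref{igenws}) applied to the smashing $w\opp$, whence Proposition \ref{pbw}(\ref{injadj}) furnishes the equivalence of $\hw$ with the injectives of $\hrt$. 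The object $I_w$ is the image in $\cu$ of the $P$ above; then $H^t(I_w)$ is an injective cogenerator because every injective of $\hrt$ has the form $H^t(\tilde J)$ with $\tilde J\in\hw$ a retract of a product of copies of $I_w$, while $H^t$ respects the relevant products via Proposition \ref{pcgt}(I.2) (since $\prod I_w\in\cu_{t\le 0}$). The delicate part throughout is the opposite-category bookkeeping---ensuring that shifts, orthogonality, and Brown--Comenetz duality interact correctly so that the class $\cu\opp_{w\opp\ge 0}$ produced from $\hat{\cp}$ matches $\cu_{t\le 0}$.
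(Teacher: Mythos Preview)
Your proposal is correct and follows essentially the same route as the paper. The paper disposes of I.2 and the right-adjacency claim in II.2 by citing Propositions 4.3(5) and 4.4(4) of \cite{bvt}, whereas you unwind these directly via the cogenerator property of $\q/\z$ and the isomorphism $\cu(X,\hat{P}[i])\cong\ab(\cu(P[i],X),\q/\z)$; likewise your argument that $H^t(I_w)$ is a cogenerator (using that $H^t$ sends $\cu$-products of objects of $\cu_{t\le 0}$ to $\hrt$-products, which indeed follows from the adjunction underlying Proposition \ref{pcgt}(I.2)) makes explicit what the paper leaves as a one-word ``hence''.
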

\begin{proof}
I.1. Obvious; note that every representable functor converts coproducts of morphisms into products of homomorphisms of abelian groups.

2. For a $\cu$-morphism $h$ and $P\in \cp$ the easy Proposition 4.3(5) of \cite{bvt} says that $h$ is $\{P\}$-null if and only if it is $\{\hat{P}\}$-conull.
Hence $\cp$ is symmetric to $\hat{\cp}$ by  Proposition 4.3(4) of loc. cit.

II.1. Since $\q/\z$ is an injective abelian group, the functor 
 $\hat{P}$ is cohomological. Moreover, it converts $\cu$-coproducts 
 into products of abelian groups; thus it is representable 
  by the definition of Brown representability. Hence $\cp$ is symmetric  to $\hat{\cp}$ by assertion I.2.

Next,  the category $\cu\opp$ is smashing by Lemma \ref{lper}(\ref{il4c}).  
   Hence $\hat{\cp}$ is perfect in  $\cu\opp$ 
    by  assertion I.1. 
 
2. 
 Theorem \ref{tpgws} implies that $w$ is cosmashing and the class $\cu_{w=0}$ is cosmashing in $\cu$.

 Next, Proposition 3.2(1,2) of \cite{bvt} allows us to apply Proposition 4.4(4) 
 of ibid. to our context; we conclude that $w$ is right adjacent to $t$ indeed. 

3. 
By Proposition \ref{pbw}(\ref{injadj},\ref{igenws}),   the restriction of  $H^t$  to $\hw$  gives an equivalence of 
 $\hw$ to  the subcategory of injective objects of $\hrt$, and $\hrt$ has enough injectives. 
 
 Now we apply Theorem \ref{tpgws}(3)   to obtain the  existence of $I_w$ such that any 
element of  $\cu_{w=0}$ is a $\cu$-retract of a product of copies of $I_w$. 
 Hence the object $H^t(I_w)$ is an injective cogenerator of $\hrt$ indeed.
\end{proof}

\begin{rema}\label{rvtt} 

1.  Let us prove that for any non-zero compact object $P$ of $\cu$ the object $\hat{P}$ is not compact in $\cu\opp$. Take  an infinite family of $C_i\in \obj \cu,\ i\in I$, such that  all the groups $H_i=\cu(P,C_i)$ are non-zero (one can just take $I=\z$ and all $C_i=P$) and choose non-zero $f_i:H_i\to \q/\z$. Since $\q/\z$ is an injective abelian group, the homomorphism $\sum f_i:\bigoplus H_i\to \q/\z$ can be factored as\\ $\bigoplus_I H_i\stackrel{i}{\to} \prod_I H_i\stackrel{f}{\to}  \q/\z$, where $i$ is the corresponding embedding. Then the morphism $C= \prod_I C_i\to \hat{P}$ corresponding to $f$ (note that $\cu(\prod C_i, \hat{P})\cong \ab(\cu(P,\prod C_i),\q/\z)\cong  \ab(\prod H_i, \q/\z)$) does not factor through the projection of $C$ onto any finite direct sum of $C_i$; hence $\hat{P}$ is not compact in $\cu\opp$ indeed.
  
 2. Moreover, note  that on $\cu=D(\q)$ (the derived category of $\q$-vector spaces)  there is a  canonical $t$-structure that is generated by the compact object $\q$ (placed in degree $0$); this $t$-structure is non-degenerate. Yet the corresponding weight structure $w\opp$ on $\cu\opp$ cannot be compactly generated since there are no non-zero compact objects in $\cu\opp$; cf. \S E.2 of \cite{neebook}.
  Consequently, one cannot apply Theorem 5 of \cite{paucomp} instead of Theorem \ref{tpgws} in the proof of Theorem \ref{tsymt}(II).

  More generally, the author suspects that  $w\opp$ is never compactly generated if $t$ is (compactly generated and) non-degenerate. Corollary E.1.3 of  \cite{neebook} gives some evidence for this conjecture.
  
\end{rema}

Now we are finally able to prove the following long-awaited for statement.

\begin{coro}\label{csymt}
If $t$ is a compactly generated $t$-structure then the category $\hrt$ possesses an injective cogenerator.
\end{coro}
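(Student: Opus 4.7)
The plan is to reduce the statement to the setting in which Theorem \ref{tsymt}(II.3) directly applies, and then to quote it. The obstacle is that a priori $\cu$ is only smashing; we are not given that $\cu$ satisfies the Brown representability property, which is a hypothesis needed to ensure that the Brown--Comenetz duals $\hat{P}$ of the compact generators $P\in\cp$ exist in $\cu$. Without Brown representability, we cannot invoke Theorem \ref{tsymt}(II) to produce an adjacent weight structure and extract an injective cogenerator from its heart.

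To dispose of this obstacle, I would replace $\cu$ by the localizing subcategory $\cu_{\cp}\subset \cu$ generated by $\cp$. By Proposition \ref{pcgt}(VI), the class $\cp$ generates a $t$-structure $t_{\cp}$ on $\cu_{\cp}$, and crucially $\hrt_{\cp}=\hrt$, so it is enough to produce an injective cogenerator in $\hrt_{\cp}$. Moreover $\cu_{\cp}$ is compactly generated by $\cp$ in the sense of Definition \ref{dsmash}(\ref{icompgc}), and so by Proposition \ref{pcgt}(V.1) it satisfies the Brown representability property (and is automatically cosmashing). The elements of $\cp$ remain compact inside $\cu_{\cp}$ since the embedding $\cu_{\cp}\hookrightarrow \cu$ preserves coproducts.

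Now the hypotheses of Theorem \ref{tsymt}(II) are in force for $(\cu_{\cp},\cp)$: the category is smashing, satisfies Brown representability, $\cp$ is a set, and all its members are compact. Part II.2 of that theorem produces a weight structure $w$ on $\cu_{\cp}$ right adjacent to $t_{\cp}$. Part II.3 then furnishes an object $I_w\in (\cu_{\cp})_{w=0}$ such that every object of $(\cu_{\cp})_{w=0}$ is a retract of a product of copies of $I_w$, and asserts that $H^{t_{\cp}}(I_w)$ is an injective cogenerator of $\hrt_{\cp}$. Via the identification $\hrt_{\cp}=\hrt$, this object serves as an injective cogenerator of $\hrt$, completing the proof. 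The argument is essentially a repackaging; the real content sits in Theorem \ref{tsymt}, whose proof rests on the Brown--Comenetz duality construction and the perfectly generated weight structure of Theorem \ref{tpgws}.
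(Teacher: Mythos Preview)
Your proof is correct and follows essentially the same route as the paper: reduce to the localizing subcategory $\cu_{\cp}$ via Proposition \ref{pcgt}(VI) so that the heart is unchanged and Brown representability holds, then invoke Theorem \ref{tsymt}(II.3). The paper's proof is simply a terser version of yours, citing Lemma \ref{lper}(\ref{il4}) rather than Proposition \ref{pcgt}(V.1) for Brown representability.
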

\begin{proof} 
According to Proposition \ref{pcgt}(VI) (cf.  the proof of Theorem \ref{tab5}) we can assume that $\cu$ is 
compactly generated (by $\cp$). 
	 Then $\cu$ satisfies the Brown representability condition by Lemma \ref{lper}(\ref{il4}), and we can apply Theorem \ref{tsymt}(II.3) to obtain the result.
\end{proof}

\begin{rema}\label{r438}
1. In  the case where $t$ is non-degenerate one may apply some alternative arguments to study $\hrt$.  One can use the existence of the right adjacent weight structure $w$ provided by Theorem \ref{tsymt}(II.2)   similarly to the proof of \cite[Corollary 4.9]{humavit} (and using  Theorems 4.8 and 3.6 of ibid.) to 
 prove that $\hrt$ is Grothendieck abelian; see the proof of \cite[Corollary 4.3.9]{bpure} for more detail.

2. 
 $t$ is automatically left non-degenerate if (and only if) $\cu$ is (compactly) generated by the set $\cp$. Moreover, the localizing subcategory of $\cu$ generated by $\cp$ is certainly compactly generated, and the hearts of the corresponding $t$ and $w$ can be computed using this subcategory. We will justify this (somewhat vague) claim in \S\ref{sdeg} below, where 
  the assumptions of Theorem \ref{tsymt}(II.3) will be weakened. However, the $t$-structures and weight structures obtained via arguments of this sort are always degenerate; see  Proposition \ref{pcgt}(VI) and  Proposition \ref{pbw}(\ref{iwdeg}). 
 
3. It appears to be (much more) difficult to "control" the right non-degeneracy of a compactly generated $t$-structure. 
 Respectively, the author does not know how to 
 use the arguments mentioned in part 1 of this remark to establish Theorem \ref{tab5} in general. 
\end{rema}

\section{On torsion theories and well-generated weight structures}\label{swgs}

In this section we give a certain classification of compactly generated  torsion theories; those essentially generalize both weight structures and $t$-structures. 

So,  in \S\ref{stt} we recall some basics on torsion theories. 

In \S\ref{scghop} we prove that compactly generated torsion theories on a given smashing triangulated category $\cu$ are in a natural one-to-one correspondence with extension-closed retraction-closed essentially small classes of compact objects of $\cu$. We also discuss their relation of our results to the ones of \cite{postov} (that are essentially just a little less general than our ones) and to Theorem  A.9 of \cite{kellerw}. 

Lastly, in \S\ref{swgws} we study general smashing torsion theories; our results yield that that all smashing weight structures on well generated triangulated categories are perfectly generated (and more than that).

\subsection{Torsion theories: basic definitions and properties}\label{stt}

Let us recall  basic definitions for torsion theories.

\begin{defi}\label{dhop}
\begin{enumerate}
\item\label{ittd}
 A couple $s$ of classes $\lo,\ro\subset\obj \cu$ 
will be said to be a {\it torsion theory} (on $\cu$) if $\lo^{\perp}=\ro$,  $\lo={}^{\perp}\ro$, and 
for any $M\in\obj \cu$ there
exists a distinguished triangle
\begin{equation}\label{swd}
L_sM\stackrel{a_M}{\longrightarrow} M\stackrel{n_M}{\longrightarrow} R_sM
{\to} L_sM[1]\end{equation} 
such that $L_sM\in \lo $ and $ R_sM\in \ro$. We will call any triangle of this form an {\it $s$-decomposition} of $M$; $a_M$ will be called an {\it $s$-decomposition morphism}.

\item\label{ittg}
We will say (following \cite[Definition 3.1]{postov}) that $s$ is {\it generated by $\cp\subset \obj \cu$} if $\cp^\perp=\ro$. 

Moreover, if $\cp$ is a set of compact objects then we will say that $s$ is {\it compactly generated}.

\item\label{ittsm}
 $s$ is  said to be (co)smashing if $\cu$ is (co)smashing and $\ro$ is smashing (resp. $\lo$ is cosmashing) in it.

\item\label{ittr}
 If $\cu'$ is a full triangulated subcategory of $\cu$ then we say that $s$ {\it restricts} to it whenever $(\lo\cap \obj \cupr,\ro\cap \obj \cupr)$ is a torsion  theory on $\cu'$.
\end{enumerate}
\end{defi}

\begin{pr}\label{phop}
Let $s=(\lo,\ro)$ be a torsion theory on $\cu$. Then the following statements are valid.

\begin{enumerate}
\item\label{itpt} There is a 1-to-1 correspondence between those torsion theories such that $\lo\subset \lo[-1]$ and $t$-structures; it is given by sending $s=(\lo,\ro)$ into $(\cu_{t\le 0}=\ro[1],\ \cu_{t\ge 0}=\lo)$. 

We will say that $t$ is associated with $s$ and $s$ is associated with $t$ in this case.

\item\label{itpw} There is a 1-to-1 correspondence between those torsion theories such that $\lo\subset \lo[1]$ and weight structures; it is given by sending $s=(\lo,\ro)$ into $(\cu_{w\le 0}=\lo,\ \cu_{w\ge 0}=\ro[-1])$. 

 $w$ is said to be associated with $s$ and $s$ is associated with $w$ in this case; we will also say that $s$ is {\it weighted}.

\item\label{itpsm} If $s$ is associated with a $t$-structure $t$ (resp. a weight structure $w$) then $s$ is (co)smashing if and only if $t$ (resp. $w$ is).

\item\label{itpg} If $s$ is generated by a class $\cp\subset \obj\cu$ and a torsion theory $s'$ satisfies this property as well then $s=s'$ and $\cp\subset \lo$.

\item\label{itp1}
Both $\lo$ and $\ro$ are retraction-closed and extension-closed in $\cu$.

\item\label{itp4}
Assume that $\cu$ is (co)smashing.  Then  $s$ is  (co)smashing if and only if the coproduct (resp., product) of  any $s$-decompositions of $M_i\in \obj \cu$ gives an $s$-decomposition of $\coprod M_i$ (resp. of $\prod M_i$). 

\item\label{itp7}
$s$-decompositions are "weakly functorial" in the following sense:  any $\cu$-morphism $g:M\to M'$ can be completed to a morphism between any choices of $s$-decompositions of $M$ and $M'$, respectively. 

\item\label{itp7p}
 If $M\in \lo$ and $M$ is a retract of $M'\in \obj \cu$ then  is also a retract of any choice of $L_sM'$ (see  Remark \ref{rtt}(\ref{irtt1}) below).

\item\label{itp8}
A morphism $h\in \cu(M,N)$ is $\lo$-null (see Definition \ref{dpcl}(1)) if and only if factors through  an element of $\ro$.

\item\label{itp9}
For $L,R\subset \obj \cu$ assume that $L\perp R$ and that for any $M\in \obj \cu$ there exists a distinguished triangle $l\to M\to r\to l[1]$  for $l\in L$ and $r\in R$. Then $(\kar_{\cu}(L),\kar_{\cu}(R))$ is a  torsion theory on $\cu$; here for a class $D$ of objects of $\cu$ we use the notation $\kar_{\cu}(D)$ for the class of all $\cu$-retracts of elements of $D$. 

\item\label{itpcg} If $s$ is compactly generated then it is smashing.
\end{enumerate}
\end{pr}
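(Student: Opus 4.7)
The plan is to dispatch the ten parts of Proposition \ref{phop} in an order that lets later items reuse earlier ones. I would begin with (itp1), since retraction- and extension-closedness of $\lo$ and $\ro$ follow directly from applying the cohomological functors $\cu(-,Z)$ for $Z\in\ro$ (and dually $\cu(X,-)$ for $X\in\lo$) to a distinguished triangle or to a splitting idempotent. With (itp1) in hand, the correspondences (itpt) and (itpw) reduce to straightforward axiom checks: starting from a torsion theory with $\lo\subset\lo[-1]$ (resp.\ $\lo\subset\lo[1]$), the shift-invariance of the other class comes from applying $\perp$ to the hypothesis, orthogonality and decomposition are given, and the converse direction uses Proposition \ref{prtst}(\ref{itperp}) (resp.\ Proposition \ref{pbw}(\ref{iort})) to recover the $\perp$-identities of a torsion theory. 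Part (itpsm) is then immediate since the associated classes differ only by a shift, and (itpg) follows because $s$ is determined by $\ro$: if $\cp^\perp = \ro = \ro'$ then $\lo = {}^\perp\ro = \lo'$, and $\cp \subset {}^\perp(\cp^\perp) = \lo$.

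The core tool for (itp7), (itp7p), (itp8) is a single factorization trick: since $\cu(\lo,\ro)=0$, any morphism from $L_s M$ to some $M'$ whose composite with $n_{M'}$ vanishes must lift through $a_{M'}$. For (itp7) I would apply this to $g\circ a_M:L_s M\to M'$, whose composite with $n_{M'}$ lies in $\cu(L_s M, R_s M') = 0$, obtaining the desired lift $L_s M\to L_s M'$; completing the commutative diagram of triangles via (TR3) finishes the argument. Part (itp7p) specializes (itp7) to a retraction $r:M'\to M$ with $M\in\lo$, where the valid decomposition $\id_M:M\to M$ with $R_s M = 0$ yields that the section $i:M\to M'$ factors through $L_s M'$, realizing $M$ as a retract of $L_s M'$. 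Part (itp8) uses that an $\lo$-null $h:M\to N$ kills $a_M$ (apply the definition with source $L_s M\in\lo$), so $h$ factors through a cone of $a_M$, namely $R_s M\in\ro$; the converse is obvious. Part (itp4) combines the fact that $\lo = {}^\perp\ro$ is automatically coproduct-closed with Proposition 1.2.1 of \cite{neebook} for one direction, and applies the hypothesis to the trivial decompositions $0\to R_i\to R_i\to 0$ for the converse. Part (itpcg) is immediate from compactness of the generators.

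The one part with genuine content beyond bookkeeping, and the main obstacle, is (itp9). Here $\kar_{\cu}(L)\perp\kar_{\cu}(R)$ is automatic, and decomposition of an arbitrary $M\in\obj\cu$ is given by hypothesis (since $L\subset\kar_{\cu}(L)$ and $R\subset\kar_{\cu}(R)$). The substantive identities are $(\kar_{\cu}(L))^\perp = \kar_{\cu}(R)$ and ${}^\perp(\kar_{\cu}(R)) = \kar_{\cu}(L)$. For the first nontrivial inclusion, given $N\in(\kar_{\cu}(L))^\perp$, I would invoke the hypothesized triangle $l\to N\to r\to l[1]$ with $l\in L\subset\kar_{\cu}(L)$ and $r\in R$; since $\cu(l,N)=0$, the first arrow vanishes, the triangle splits, and $N$ becomes a retract of $r\in R\subset\kar_{\cu}(R)$. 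The other identity is dual, killing the arrow $N\to r$ using $N\in{}^\perp(\kar_{\cu}(R))$ and $r\in R\subset\kar_{\cu}(R)$. This ``zero morphism forces a split triangle'' step is the only real subtlety in the whole proposition.
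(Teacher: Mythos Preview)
Your proposal is correct and essentially reconstructs the standard arguments that the paper cites from \cite{bvt}; the paper itself only spells out (\ref{itp7p}), declares (\ref{itpsm}) and (\ref{itpcg}) obvious, and defers the remaining parts to Propositions 3.2(1,2) and 2.4 of \cite{bvt}.

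Two small remarks. First, your sketch of (\ref{itp7p}) is slightly elliptical: factoring the section $i:M\to M'$ through $L_sM'$ via (\ref{itp7}) gives a map $g:M\to L_sM'$ with $a_{M'}\circ g=i$, but you still need the splitting map $L_sM'\to M$; this is simply $r\circ a_{M'}$ for the retraction $r:M'\to M$, and then $(r\circ a_{M'})\circ g=r\circ i=\id_M$. The paper instead invokes (\ref{itp7}) twice (once for $i$ and once for $r$), which is equivalent but slightly less direct than your route. Second, the proposition has eleven parts, not ten, though you do address them all.
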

\begin{proof}

These statements are mostly easy, and all of them except 
assertions \ref{itpsm}, \ref{itp7p}, and \ref{itpcg} were established in \cite{bvt} (see Propositions 3.2(1,2) and 2.4 of ibid.).

Next,  assertions \ref{itpsm} and \ref{itpcg} are obvious.

Lastly,  if $M$ is a retract of $M'$ then $\id_M$ factors through $M'$. Moreover, if $M\in \lo$ then the distinguished triangle $M\to M\to 0\to M[1]$ is an $s$-decomposition of $M$. Thus applying assertion \ref{itp7} twice we obtain a commutative diagram $$\begin{CD}
 M@>{g}>>L_sM'@>{h}>>M\\
@VV{\id_M}V@VV{a_{M'}}V@VV{\id_M}V \\
M@>{i}>>M'@>{p}>>M
\end{CD}$$ 
which yields that $h\circ g=\id_M$, i.e., that $M$ is a retract of $L_sM'$ indeed.
\end{proof}

\begin{rema}\label{rtt}
\begin{enumerate}
\item\label{irtt1}
 The object $M$ "rarely" determines its $s$-decomposition triangle (\ref{swd}) canonically; see Remark \ref{rstws}(2) along with Proposition \ref{phop}(\ref{itpw}). Yet we will often need some choices of its ingredients; so we will use the notation of (\ref{swd}).

\item\label{irtt2}
Our definition of torsion theory actually follows \cite[Definition 3.2]{postov}  and is somewhat different
from Definition 2.2 of \cite{iyayo}, from which our term comes from.  However, Proposition \ref{phop}(\ref{itp1},\ref{itp9})  easily implies that these two definitions are equivalent; see also Remark 2.5(1) of \cite{bvt} for some more detail.
\end{enumerate}
\end{rema}

\subsection{A classification of compactly generated torsion theories}\label{scghop}

\begin{theo}\label{tclass}
Assume that  
 $\cp\subset \obj \cu$ 
is a  set of compact objects.

Then the following statements are valid.
\begin{enumerate}
\item\label{iclass1}
The strong extension-closure $\lo$   of $\cp$ (see Definition \ref{dses}) and $\ro=\cp^\perp$ give a smashing torsion theory $s$ on $\cu$ (consequently, $s$ is the torsion theory generated by $\cp$). Moreover, $\lo$ equals the big hull of $\cp$, and for any $M\in \obj \cu$ there exists a choice of $L_sM$ (see   Remark \ref{rtt}(\ref{irtt1})) that belongs to the naive big hull of $\cp$.

\item\label{iclass2} The class of compact objects in $\lo$ equals the $\cu$-envelope of $\cp$ (see \S\ref{snotata}).

\item\label{iclassts} The correspondence sending a compactly generated torsion theory  $s=(\lo,\ro)$ for $\cu$ into $\lo\cap\obj \cu^{\alz}$ (i.e., we take compact objects in $\lo$; see Definition \ref{dsmash}(\ref{icomp})), 
 gives a one-to-one correspondence between the following classes: the class of  compactly generated torsion theories on $\cu$ and the class of essentially small 
retraction-closed extension-closed subclasses  of $\cu^{\alz}$.
\footnote{Actually, $\cu^{\alz}$ is essentially small itself in "reasonable" cases; in this case the essential smallness of  classes of objects in it is automatic.} 

\item\label{iclasst}  If the torsion theory  $s$ generated by $\cp$ is associated to a $t$-structure then the class $\lo$ ($=\cu_{t\ge 0}$)  equals the naive big hull of $\cp$.

\item\label{iclass5}  Let 
$H$ be a cp (resp. a cc) functor from $\cu$ into an AB4* (resp. AB5) category $\au$ whose restriction to $\cp$ is zero. Then $H$ kills all elements of $\lo$ as well. 
\end{enumerate} 
\end{theo}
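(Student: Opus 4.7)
My plan for Part 1 is to adapt the cellular-tower construction from the proof of Theorem \ref{tpgws}, simplified by the absence of shifts. For each $M \in \obj \cu$, set $M_0 = M$ and inductively take a distinguished triangle
\[ P_i \to M_i \to M_{i+1} \to P_i[1], \qquad P_i = \coprod_{(P,f):\, P \in \cp,\, f \in \cu(P, M_i)} P, \]
and put $RM = \hinli M_i$. Compactness of each $P \in \cp$ gives $\cu(P, RM) \cong \inli \cu(P, M_i) = 0$ (every $f: P \to M_i$ becomes zero at the next level by construction), so $RM \in \cp^\perp = \ro$. Defining $LM$ as the fiber of $M \to RM$, the octahedral trick of the proof of Theorem \ref{tpgws} identifies $LM$ with a homotopy colimit $\hinli L_i$ with $L_0 = 0$ and connecting cones $P_i$, putting $LM$ in the naive big hull of $\cp$. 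Lemma \ref{lbes}(\ref{isesperp}) applied to the cp functors $\cu(-, R)$ for $R \in \ro$ yields that the strong extension-closure $S$ of $\cp$ is left-orthogonal to $\ro$, so Proposition \ref{phop}(\ref{itp9}) produces the torsion theory $(B^{\kar}, \cp^\perp)$, where $B$ is the naive big hull. Finally, for $M \in \lo = {}^\perp\ro$ the orthogonality forces $n_M = 0$, so the triangle splits and $M$ is a retract of $LM \in B$; this gives the chain of inclusions $\lo \subseteq B^{\kar} \subseteq S \subseteq \lo$, identifying all three with $\lo$. Smashing is immediate from compactness of $\cp$, and Part 5 follows at once from the identification $\lo = S$ together with Lemma \ref{lbes}(\ref{isesperp},\ref{isescperp}).

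For Part 2, the inclusion envelope of $\cp \subseteq \lo \cap \obj\cu^{\alz}$ is routine, since both factors are retraction-closed, extension-closed, and contain $\cp$. The reverse is the technical heart: given $M$ compact in $\lo$, Part 1 presents $M$ as a retract of some $\hinli L_i$, and compactness of $M$ lets $\id_M$ factor through a finite stage $L_k$, built as an iterated extension of possibly-infinite coproducts $P_0,\ldots,P_{k-1}$ of elements of $\cp$. The plan is a level-by-level refinement: the map $M \to L_k \to P_{k-1}$ factors through a finite sub-coproduct $P_{k-1}' \subset P_{k-1}$ by compactness of $M$; the pullback triangle $L_{k-1} \to L_k' \to P_{k-1}'$ carries a compatible retraction so that $M$ is a compact retract of $L_k'$. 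Iterating this on the lower floors of the tower (extracting at each step a compact fiber object $M'$ together with a compatible map to the remaining tail, and invoking the inductive hypothesis in lower dimension) eventually exhibits $M$ as a retract of an iterated extension of finite coproducts of elements of $\cp$, hence of an element of the envelope. Part 3 then follows mechanically: for $\cp' \subset \obj\cu^{\alz}$ essentially small, retraction-closed, and extension-closed, Part 2 gives $\lo' \cap \obj\cu^{\alz} = \cp'$; conversely, any compactly generated $s$ is generated by $\lo \cap \obj\cu^{\alz}$, which is essentially small by Part 2 applied to any compact generating set.

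Part 4 follows from the observation that for an $s$ associated to a $t$-structure, $\lo \cap \ro = \{0\}$: any $X$ in the intersection satisfies $\cu(X, X) = 0$ by orthogonality. Since $\ro = \cu_{t \le -1}$ is closed under $[-1]$, for $M \in \lo$ the split triangle $LM \cong M \oplus RM[-1]$ (produced exactly as in Part 1) places $RM[-1]$ in both $\ro$ and $\lo$ (the latter as a retract of $LM \in \lo$), forcing $RM = 0$ and $M \cong LM$ in the naive big hull. The principal obstacle throughout is the iterative compactness-refinement of Part 2: while the idea is natural, maintaining compatibility of the retraction as the infinite coproducts $P_i$ are refined down the tower requires careful book-keeping, since at each stage the fiber object extracted from $M$ is only a retract of the next floor after an additional pullback, and the inductive reduction of the length of the tower has to be set up to accommodate this.
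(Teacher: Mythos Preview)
Your Part 1 contains a genuine gap. You set $RM=\hinli M_i$ and correctly deduce $RM\in\ro$ from compactness via Lemma \ref{lcoulim}(\ref{ihc5}(ii)), but then take $LM$ to be the fiber of the canonical map $M\to RM$ and assert that ``the octahedral trick'' identifies it with $\hinli L_i$. That identification---i.e.\ that $\hinli L_i\to M\to\hinli M_i$ is a distinguished triangle---is precisely what is \emph{not} available in a bare triangulated category; see Remark \ref{r27}, which attributes it to a stable-derivator hypothesis. The paper therefore runs the argument in the opposite order: it sets $L=\hinli L_i$, chooses $b:L\to M$ compatible with the $b_i$, completes to $L\stackrel{b}{\to}M\to R$, and then proves $R\in\ro$ by applying Lemma \ref{ldualt}(5,2) and (4,3(ii)) to the cc functor $H^P=\cu(P,-)$ for each $P\in\cp$ (which lands in the AB5 category $\ab$). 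Your easy observation $\hinli M_i\in\ro$ does not shortcut this, since without the missing triangle you cannot compare $R$ with $\hinli M_i$.

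For Part 2 your approach is genuinely different from the paper's, and the step you yourself flag as the ``principal obstacle'' is not merely book-keeping. After replacing $L_k$ by $L_k'$ (the extension of the finite $P_{k-1}'$ by $L_{k-1}$) you still have $M$ as a retract of $L_k'$, but to iterate you need your compact fiber $M'$ to be a retract of $L_{k-1}$. This would require the composite $L_{k-1}\to L_k'\to M\to P_{k-1}'$ to vanish, i.e.\ $\beta\circ e\circ\alpha=0$ where $e=i\circ r$ is the idempotent on $L_k'$; but only $\beta\circ\alpha=0$ is known, and $e$ need not respect the triangle. The paper sidesteps tower manipulations entirely: it invokes Corollary 3.11 of \cite{bsnull} to produce, on a small skeleton $\cuz$ of $\lan\cp\ra$, a homological ``detector'' $H_0:\cuz\to\ab$ whose zero class is exactly the envelope of $\cp$; Kan-extends $H_0$ to a cc functor $H$ on $\cu$ (Proposition \ref{pkrause}(\ref{ikr8})); Brown--Comenetz dualizes to a cp functor $\hdu$; and represents its restriction to the localizing subcategory $\cu'$ generated by $\cp$ by an object $I\in\ro$. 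Then any compact $M\in\lo\cap\obj\cu'$ satisfies $H_0(M)\cong H(M)=0$, forcing $M$ into the envelope. Your Parts 3 and 5 are fine, and your Part 4 argument via $\lo\cap\ro=\{0\}$ and $\ro[-1]\subset\ro$ is a valid alternative to the paper's one-line appeal to the canonicity of $t$-decompositions (Proposition \ref{prtst}(\ref{itcan})).
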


\begin{proof}
\begin{enumerate}
\item If $s$ is a torsion theory indeed then it is smashing according to Proposition \ref{phop}(\ref{itpcg}).

Since $\cp\perp \ro$, for any $N\in \ro$ the cp functor 
$H_N=\cu(-,N)$ 
 kills $\lo$ according to Lemma \ref{lbes}(\ref{isescperp}). Hence $\lo\perp\ro$.\footnote{This statement was previously proved in \cite{postov} and our argument is just slightly different from the one of Pospisil and \v{S}\v{t}ov\'\i\v{c}ek; see Lemma 3.9 of  ibid.}
Since $\lo$ is retraction-closed by definition, Proposition \ref{phop}(\ref{itp9}) (along with Lemma \ref{lbes}(\ref{iseses})) reduces the assertion  to the existence for any  $M\in \obj\cu$ of  an $s$-decomposition 
 such that the corresponding $L_sM$   belongs to the naive big hull of $\cp$. Now we argue similarly to the proof of Theorem \ref{tpgws}(1); yet we apply "the easier version of" Lemma \ref{ldualt}(3). 

  We fix $M$ and construct a certain sequence of $M_k\in \obj \cu$ for $k\ge 0$ by induction in $k$ starting
from $M_0=M$. 
Assume  that $M_k$ (for some $k\ge 0$) is  constructed; then we take $P_k=\coprod_{(P,f):\,P\in \cp,f\in \cu(P,M_k)}P$; $M_{k+1}$ is a cone of the morphism $\coprod_{(P,f):\,P\in \cp,f\in \cu(P,M_k)}f:P_k\to M_k$.
Then compositions of the morphisms $h_k:M_{k}\to M_{k+1}$ given by this construction yields morphisms $g_i:M\to M_i$ for all $i\ge 0$.

We apply Lemma \ref{ldualt} 
 and obtain the existence of connecting morphisms $0=L_0\stackrel{s_0}{\longrightarrow}L_1\stackrel{s_1}{\longrightarrow}L_2\stackrel{s_2}{\longrightarrow}\dots$; we set $L=\hinli L_i$. Moreover, we have a compatible system of morphisms $b_i:L_i\to M$ (cf. the formulation of that lemma) and we  choose  $b: L\to M$ to be compatible with  $(b_k)$ in the sense of Lemma \ref{lcoulim}(\ref{ihc6}). 
 We complete $b$ to a distinguished triangle $L\stackrel{b}{\to} M\stackrel{a}{\to} R\stackrel{f}{\to} L[1]$;
it will be our candidate for an $s$-decomposition of $M$.

 Since $\co (s_i)\cong P_i$,   $L$ belongs to the naive big hull of $\cp$ by the definition of this hull.

It remains to prove that $R\in \ro$, i.e., that $\cp\perp R$. 
For an element $P $ of $\cp$ we should check that $\cu(P,R)=\ns$, i.e.,  for the functor $H^P=\cu(P,-)$ we should prove that $H^p(R)=0$.

The long exact sequence  $$\dots  \to \cu(P,L)\to  \cu(P,M)\to \cu(P,R)\to \cu(P, L[1])\to \cu(P,M[1])\to\dots $$ translates this into the following assertion: $H^P(b)$ is surjective and $H^P(b[1])$ is injective. 

Now, $H^P$ is clearly a wcc (and actually a cc) functor, and its target is an AB5 category. Since $H^P(a_i)$ is epimorphic by construction for all $i\ge 0$, Lemma \ref{ldualt}(5,2) implies the surjectivity of $H^P(b)$. 

Next 
we apply   Lemma \ref{ldualt}(4,3(ii)) for $H=H^P\circ [1]$ and obtain that to verify the injectivity of  $H^P(b[1])$ it remains to check that $H^P(h_i)=0$ for $i\ge 0$. 
Applying part 5 of the lemma we reduce the statement in question to the aforementioned surjectivity of  $H^P(a_i)$. 
  
\item Recall that the category $\cu^{\alz}$ (see Definition \ref{dsmash}(\ref{icomp})) is a full triangulated subcategory of $\cu$; moreover, this subcategory is obviously retraction-closed in $\cu$. Hence the class $\obj\cu^{\alz}\cap \lo$ contains the envelope of $\cp$.

 Next, the smallest strict triangulated  subcategory of $\cu$ 
 containing $\cp$ is essentially small by  Lemma 3.2.4 of \cite{neebook}. Hence $\lan \cp\ra_{\cu}$   is essentially small as well (cf. Proposition 3.2.5 of ibid.); thus  the envelope of $\cp$ also is. 
  Hence we should prove that 
 the class $\obj\cu^{\alz}\cap \lo$ equals $\cp$ whenever 
 $\cp$ is essentially small, retraction-closed, and extension-closed in $\cu$.  

Now, Corollary 3.11 of  \cite{bsnull} 
 (applied to the category $\cuz\opp$) gives the following remarkable statement: if $\cuz$ is a small triangulated category then a set $\cp_0$ of its objects is the zero class (see Lemma \ref{lbes}(\ref{izs})) of some "detecting" homological functor $H_0:\cuz\to \ab$ if and only if $\cp_0$ is extension-closed and retraction-closed in $\cuz$.
We take $\cuz$ to be  a small skeleton of the category $\lan \cp\ra$, $\cp_0=\obj \cuz\cap \cp$, and take  $H_0$ to be the corresponding "detector functor".  
Since all objects of $\cuz$ are compact in $\cu$,  the left Kan extension $H$ of $H_0$  to $\cu$ (as provided by Proposition \ref{pkrause}) is  a cc functor according to Proposition \ref{pkrause}(\ref{ikr8}).  Similarly to the proof of Theorem \ref{tsymt} we take the Brown-Comenetz dual functor $\hdu$ from  $\cu$ into $\ab$, $M\mapsto \ab(H(M),\q/\z)$.
This a cp functor from $\cu$ into $\ab$, 
 and its zero class 
  coincides with that of $H$. 

We take $\cu'$ to be localizing subcategory of $\cu$ generated by $\cp$; clearly, this subcategory contains $\lo$ (see the previous assertion). Moreover, Lemma 4.4.5 of \cite{neebook} implies that that the subcategory $\cu'{}^{\alz}$ essentially equals $\cuz$; hence the class  $\obj \cu'\cap \obj \cu^{\alz}$ 
 essentially equals $\obj \cuz$. 

Since $\cu'$ is generated by a set of compact objects as its own localizing subcategory, $\cu'$ satisfies the Brown representability condition according to Proposition \ref{pcgt}(V.1).  Thus the restriction $\hdu'$ of the functor $\hdu$  to $\cu'$ is $\cu'$-representable by some $I\in \obj\cu'$.  Since the zero class of $\hdu'$  contains $\cp$, we have $I\in \ro$. Hence for $M\in  \obj \cu'\cap \obj \cu^{\alz}$ we have $M\in \lo$ if and only if $M\in \cp$.

\item We take the envelope $\cp'$ of $\cp$. As we have just shown, $\cp'$ is essentially small, and since $\cp'\perpp=\cp\perpp$ (see Proposition \ref{phop}(\ref{itp1})),   the torsion theory $s$ given by 
assertion \ref{iclass1} is also generated by $\cp'$. Hence it suffices to note that $\lo\cap \cu^{\alz}=\cp'$ according to assertion \ref{iclass2}.

\item Recall from 
 assertion \ref{iclass1} 
that for any $M\in \obj \cu$ there exists a choice of $L_sM$ that belongs to the naive big hull of $\cp$. Now, if $s$ is associated to a $t$-structure and $M\in \lo=\cu_{t\ge 0}$ then we  have $L_sM=M$ according to Proposition \ref{prtst}(\ref{itcan}); this concludes the proof.

\ref{iclass5}. Immediate from Lemma \ref{lbes}(\ref{isesperp}) (resp. \ref{isescperp}).
\end{enumerate}
\end{proof}

\begin{rema}\label{rnewt}
1. Recall that Theorem 3.7 and  Corollary 3.8 of \cite{postov} give parts \ref{iclass1}--\ref{iclassts} of our theorem in the case where $\cu$ is  a "stable derivator" triangulated category $\cu$. 

Note however that the existence of a "detector object" $I$ as in our proof is a completely new result. Moreover, if the class $\cu^{\alz}$ is essentially small itself then one can take $\cu'=\cu$ in this reasoning.

2. Combining part \ref{iclassts} of our theorem with Proposition \ref{phop}(\ref{itpt}) (resp. Proposition \ref{phop}(\ref{itpw}))  we obtain a bijection between compactly generated $t$-structures (resp. weight structures) and those essentially small 
retraction-closed extension-closed subclasses  
of $\obj\cu^{\alz}$ that are also closed with respect to $[1]$ (resp. $[-1]$); this  generalizes Theorem 4.5 of ibid. to arbitrary triangulated categories having coproducts.

Moreover, we obtain the existence of a certain "join" operation on compactly generated  torsion theories; cf. Remark \ref{revenmorews} above. 

3. We also obtain that part \ref{iclasst} of our theorem generalizes Theorem A.9 of \cite{kellerw} where 
 stable derivator  categories were considered (similarly to the aforementioned results of \cite{postov}).

4. The question whether all  smashing weight structures  on a given compactly generated category $\cu$ are compactly generated is a certain weight structure version of  the (generalized) telescope conjecture (that is also sometimes called the smashing conjecture) for $\cu$; this question generalizes its "usual" stable version (see Proposition 3.4(4,5) of \cite{bvt}). 
It is well known (see the main result of \cite{kellerema})  that the answer to the shift-stable version of the  question is negative for a general $\cu$; hence this is only more so for our weight structure version. On the other hand, the answer to our question for $\cu=SH$ (the topological stable homotopy category)  is not clear.

5. The description of compact objects in $\lo$ provided by part \ref{iclass2} of our theorem is important for the continuity arguments in \cite{binfeff}.\end{rema}

\subsection{On well generated weight structures and torsion theories}\label{swgws}

Now we will prove that all smashing weight structures on well generated triangulated categories are perfectly generated (and also {\it strongly well generated}). 
 Unfortunately,  
 this will require several definitions and 
technical facts.

\begin{defi}\label{dbecomp}
 Let $\cu$ be a smashing triangulated category, and 
 $\be$ be a regular infinite cardinal (that is $\be$ cannot be presented as a sum of less than $\be$ cardinals that are less than $\be$), $\cp\subset \obj \cu$, and $\cocp$ is the closure of $\cp$ with respect to $\cu$-coproducts.
\begin{enumerate}

\item\label{idsmall}
 An object $M$ of $\cu$ is said to be {\it $\be$-small} if for any small family $N_i\in \obj \cu$ any morphism $M\to \coprod N_i$ factors through the coproduct of a subset of $\{N_i\}$ of cardinality less than $\be$.

\item\label{idcomp} We will say that an object $M$ of $\cu$ is {\it $\be$-compact} if it belongs to the maximal   perfect class of $\be$-small objects of $\cu$ (whose existence is immediate from Lemma \ref{lper}(\ref{il6})). 

 We will write $\cu^{\be}$ for the full subcategory of $\cu$ formed by $\be$-compact objects.

\item\label{idpg} 
 We  say that $\cu$ is $\be$-{\it well generated} (or just well generated) if  there exists a perfect {\bf set} of 
$\be$-small objects that generates $\cu$ as its own localizing subcategory.\footnote{Note that these objects will automatically be $\be$-compact; see the previous part of this definition.} 

\item\label{idclass} 
 A  class $\cpt$ of objects of $\cu$ is said to be {\it $\be$-coproductive} if it is closed with respect to $\cu$-coproducts of less than $\be$ objects.

\item\label{idchop} We will say that a torsion theory $s=(\lo',\ro')$ on a full triangulated subcategory $\cu'$ of $\cu$ is {\it $\be$-coproductive} if both $\obj \cu'$ and $\ro'$ are $\be$-coproductive.

\item\label{idapp} 
 A morphism $h\in \cu(M,N)$ (for some $M,N\in \obj\cu$) is said to be a {\it $\cp$-approximation} (of $N$) if $h$ is  $\cp$-epic (see Definition \ref{dpcl}(1)) and $M$ belongs to $\obj\cocp$.

\item\label{idcovf} We will say that $\cp$ is {\it contravariantly finite} (in $\cu$) if for any $N\in \obj \cu$ there exists its $\cp$-approximation.\footnote{Actually, the standard convention is to say that $\cocp$ is contravariantly finite if this condition is fulfilled; yet our version of this term is somewhat more convenient for the purposes of this section.}
\end{enumerate}
\end{defi}

\begin{rema}\label{rbecomp}
1. Our definition of $\be$-compact objects is equivalent to the one used in \cite{krauwg}.
Indeed, coproducts of less than $\be$ of  $\be$-small objects are obviously $\be$-small; thus the class $\obj \cu^\be$ is $\be$-coproductive. Hence the equivalence of definitions follows from Lemma 4 of ibid. 
Furthermore, Lemma 6 of ibid. states  that 
(both of) these definitions are equivalent to Definition 4.2.7 
 of \cite{neebook} if we assume in addition that $\cu^{\be}$ is an essentially small category. 

2. Now we recall some more basic properties of $\be$-compact objects in an $\al$-well generated category $\cu$ assuming that $\be\ge \al$ are regular cardinal numbers.

Theorem A of \cite{krauwg} yields immediately that $\cu^\be$ is an essentially  small triangulated subcategory of $\cu$. 

Moreover, the union of  $\cu^{\gamma}$ for $\gamma$ running through all regular cardinals ($\ge \al$) equals $\cu$ (see the corollary 
 at the end of ibid. or Proposition 8.4.2 of \cite{neebook}). 

3. Lastly, we recall a part of \cite[Lemma 4]{krauwg}. 
For any $\be$-coproductive essentially small  perfect class $\cp$ of $\be$-small objects of a triangulated category $\cu$ (that has coproducts) it says the following:
for any $P\in \cp$ and any set of  $N_i\in \obj \cu$ any morphism $P\to \coprod N_i$ factors through the coproduct of some $\cu$-morphisms $M_i\to N_i$ with $M_i\in \cp$. 
\end{rema}

Let as now prove a collection of statements on smashing torsion theories; part III of the following theorem is dedicated to weight structures and appears to be its most interesting part.

\begin{theo}\label{twgws}
Let $s=(\lo,\ro)$ be a smashing torsion theory on $\cu$,  
 and $\cp\subset \obj \cu$. 

I. Consider the class $J$ of $\cu$-morphisms characterized by the following condition: $h\in \cu(M,N)$ (for $M,N\in \obj \cu$) belongs to $J$ whenever for any chain  of morphisms $ L_sP\stackrel{a_P}{\longrightarrow} P\stackrel{g}{\to}M \stackrel{h}{\to}N$ its composition is zero if $P\in \cp$ and $a_P$ is an $s$-decomposition morphism (see Definition \ref{dhop}(\ref{ittd})).

Then the following statements are valid.

\begin{enumerate}
\item\label{indep}  The class $J$ will not change if we will fix $a_P$ for any $P\in \cp$ in this definition.

\item\label{icontraf} Assume that $\cp$ is  contravariantly finite and $s$ is smashing. Then $h$ 
belongs to $J$ if and only if there exists a
$\cp$-approximation morphism $AM\stackrel{g}{\to} M$ and an $s$-decomposition morphism $a_{AM}: L_sAM\to AM$ such that 
$h\circ g \circ a_{AM}=0$. Moreover, the latter is equivalent to the vanishing of all compositions of this sort.

\item\label{icoprcl}
 Assume that $\cp$ is  contravariantly finite and  
 perfect, 
and $s$ is smashing. Then the class $J$ is closed with respect to 
 coproducts.

\item\label{ilscp}
 Assume that for any $P\in \cp$ there exists a choice of $ L_sP\in \cp$; denote the class of these choices by $\lscp$. Then $J$ coincides with the class of $\lscp$-null morphisms.

\item\label{ilscper} Assume in addition (to the previous assumption) that  $\cp$ is a 
 perfect   contravariantly finite  class 
and $s$ is smashing. Then $\lscp$ is a 
 perfect   contravariantly finite class as well.

\item\label{ilscperw}
 Assume in addition that $s$ is weighted (see Proposition \ref{phop}(\ref{itpw})); suppose  that the class $\cp$ is essentially small, equals $\cp[1]$,   and generates $\cu$ as its own localizing subcategory. Then  the class $L_s\cp$  generates $s$ and $\lo$ is the big hull of $L_s\cp$; thus $s$ is perfectly generated in the sense of Remark \ref{rigid}(3).
\end{enumerate}

II. For a regular cardinal $\be$ let  $s'=(\lo',\ro')$ be a $\be$-coproductive torsion theory on a full triangulated category $\cupr$ of $\cu$ such that $\obj \cupr$ is a  perfect essentially small 
class of  $\be$-small objects.  Then $\lo'$ is  perfect as well. 

Moreover, if $s'$ is weighted in $\cupr$ then $\lo'$ generates a weighted smashing torsion theory on  $\cu$.

III. Assume in addition that $\cu$ is $\al$-well generated for some regular cardinal $\al$ (see Definition \ref{dbecomp}(\ref{idpg})), and that  $s$ is smashing. 

1. Assume that $s$ restricts (see Definition \ref{dhop}(\ref{ittr})) to $\cu^{\be}$  for a regular cardinal $\be \ge \al$. Then $\lo\cap \obj \cu^\be$ is an essentially small  perfect class.

2. If $s$ is weighted then it restricts to $\cu^{\be}$ for all large enough regular $\be\ge \al$; being more precise, it suffices to assume the existence of $L_sM\in \obj \cu^{\be}$ for all $M\in \obj \cu^{\al}$. Moreover, the class $\lo\cap \obj \cu^\be$ is perfect and generates $s$ for any $\be$ that satisfies this inequality. 
\end{theo}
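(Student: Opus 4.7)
\textit{Part III.1.} I would apply Part II to $\cu'=\cu^\be$ with $s'$ the (assumed) restriction of $s$. By Remark \ref{rbecomp}, $\cu^\be$ is essentially small and $\obj\cu^\be$ is a perfect class of $\be$-small objects. The class $\ro\cap\obj\cu^\be$ is $\be$-coproductive: $\ro$ is closed under all $\cu$-coproducts since $s$ is smashing, and $\obj\cu^\be$ is $\be$-coproductive because $<\be$-coproducts of $\be$-small objects are $\be$-small. All hypotheses of Part II hold, and its conclusion is precisely that $\lo\cap\obj\cu^\be$ is perfect.

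\textit{Part III.2: strategy and the restriction step.} The chief task is to verify that $s$ restricts to $\cu^\be$; once this is in hand, III.1 supplies perfectness and I.6 supplies generation. First I would promote the hypothesis from $L_sM=w_{\le 0}M$ to arbitrary weight truncations: for $M\in\obj\cu^\al$ and $i\in\z$, the shift $M[-i]$ also lies in $\cu^\al$, so by hypothesis $w_{\le 0}(M[-i])\in\obj\cu^\be$, and $w_{\le i}M$ may be chosen as $w_{\le 0}(M[-i])[i]\in\obj\cu^\be$. Define
\[
\mathcal{D}=\{M\in\obj\cu:\ w_{\le i}M\in\obj\cu^\be\text{ for some choice, for every }i\in\z\}.
\]
The previous step gives $\cu^\al\subset\mathcal{D}$. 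Next I would verify that $\mathcal{D}$ is closed under (i) shifts, via the identification $w_{\le i}(M[\pm 1])=w_{\le i\mp 1}M[\pm 1]$ (for compatible choices) together with shift-stability of $\cu^\be$; (ii) triangles, via Proposition \ref{pbw}(\ref{iwdext}), which for a triangle $M'\to M\to M''$ produces compatible weight decompositions fitting into a distinguished triangle $w_{\le i}M'\to w_{\le i}M\to w_{\le i}M''$, so $\cu^\be$-membership of the outer terms passes to the middle; (iii) $<\be$-coproducts, via Proposition \ref{phop}(\ref{itp4}) (for smashing $s$ the coproduct of weight decompositions is again a weight decomposition) and the $\be$-coproductivity of $\cu^\be$; (iv) retracts, by splitting an idempotent in the smashing category $\cu$ to write $M'=M\oplus N$ (see Proposition \ref{pbw}(\ref{igenws})), choosing $w_{\le i}M'=w_{\le i}M\oplus w_{\le i}N$, and then invoking retraction-closedness of $\cu^\be$.

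\textit{Completion and main obstacle.} Combining these closure properties with the standard description of $\cu^\be$ in an $\al$-well-generated category as the smallest retraction-closed triangulated subcategory of $\cu$ that contains $\cu^\al$ and is closed under $\cu$-coproducts of cardinality $<\be$ (compare Remark \ref{rbecomp} and \cite{krauwg}) yields $\cu^\be\subset\mathcal{D}$, so $s$ indeed restricts to $\cu^\be$. Part III.1 then delivers perfectness of $\lo\cap\obj\cu^\be$. For generation I would apply I.6 with $\cp=\cu^\be$: the class is essentially small, satisfies $\cp=\cp[1]$, and generates $\cu$ as a localizing subcategory (since $\cu^\al\subset\cu^\be$ generates), while the standing assumption $L_s\cp\subset\cp$ used in I.4--I.6 is now exactly the restriction just proved. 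I.6 outputs that $L_s\cp$ generates $s$, and a fortiori so does $\lo\cap\obj\cu^\be\supset L_s\cp$. Finally, the existence of arbitrarily large regular $\be\ge\al$ satisfying the hypothesis is routine: each $L_sM$ with $M\in\cu^\al$ is $\be_M$-compact for some regular $\be_M$ (Remark \ref{rbecomp}), and since $\cu^\al$ is essentially small one may take any regular cardinal dominating all the $\be_M$. The principal obstacle in this plan is pinning down the precise closure description of $\cu^\be$ from $\cu^\al$ and reconciling it with the maximal-perfect-class definition of $\be$-compactness in Definition \ref{dbecomp}(\ref{idcomp}); the technical care lies in the interplay between retracts and triangles in this closure, together with verifying that shifts are captured by the triangle closure applied to trivial cones.
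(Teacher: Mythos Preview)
Your overall strategy for Part III matches the paper's: for III.1 you invoke Part II (the paper instead applies I.\ref{ilscper} with $\cp=\obj\cu^\be$, noting that $\lo\cap\obj\cu^\be$ is a valid choice of $L_s\cp$), and for III.2 both you and the paper reduce via I.\ref{ilscperw} to showing that $s$ restricts to $\cu^\be$. The paper, however, does not attempt a direct proof of the restriction step; it simply cites Proposition 2.3.4(3) of \cite{bwcp}. Your attempt to prove this restriction from scratch via the class $\mathcal{D}$ is more ambitious, but step (iv) contains a genuine gap.

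The problem with (iv) is this: knowing that \emph{some} choice of $w_{\le i}M'$ lies in $\cu^\be$ does not let you conclude that $w_{\le i}M\in\cu^\be$ for a retract $M$ of $M'$. Your argument writes $M'=M\oplus N$ and then ``chooses $w_{\le i}M'=w_{\le i}M\oplus w_{\le i}N$'', but this is a \emph{different} choice of weight truncation of $M'$ from the $\be$-compact one you were given, and distinct weight truncations of the same object are in general not isomorphic (weight decompositions are not functorial). So you cannot pass from $\be$-compactness of the given $w_{\le i}M'$ to $\be$-compactness of $w_{\le i}M$ this way. A clean fix, sufficient for the ``large enough $\be$'' claim, is to observe that once $\be>\aleph_0$ the full subcategory on $\mathcal{D}$ is a triangulated subcategory closed under countable coproducts and is therefore idempotent complete (Remark 1.6.9 of \cite{neebook}); retract-closure then comes for free and step (iv) is unnecessary. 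You would still need to justify carefully that $\cu^\be$ is contained in the closure of $\cu^\al$ under shifts, triangles and $<\be$-coproducts, which you correctly flag as the main technical point; this is where the external reference \cite{bwcp} (or the corresponding arguments in \cite{neebook}, \cite{krauwg}) does real work.
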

\begin{proof}

I.\ref{indep}. It suffices to note that any $s$-decomposition morphism for $M$ factors through any other one according to Proposition \ref{phop}(\ref{itp7}).

\ref{icontraf}. We fix $h$ (along with $M$ and $N$).  

The definition of approximations along with Proposition \ref{phop}(\ref{itp7}) implies that any composition  $ L_sP\stackrel{a_P}{\longrightarrow} P\stackrel{g}{\to}M$ as in the definition of $J$ factors through the composition morphism  $L_sAM\to M$. Hence if the composition $L_sAM\to N$ is zero then $h\in J$.

Conversely, assume that $h\in J$.
Since  $\cp$ is  contravariantly finite, we can choose  a $\cp$-approximation morphism $g\in \cu(AM, M)$. 
Present $AM$ as a coproduct of some $P_i\in \cp$; choose some $s$-decomposition morphisms $L_sP_i\stackrel{a_{P_i}}{\longrightarrow} P_i$. Since $s$ is smashing,  the morphism $a_{AM}^0=\coprod a_{P_i}$ is an $s$-decomposition one as well according to Proposition \ref{phop}(\ref{itp4}). Since $h \circ g\circ  a_{P_i}=0$ for all $i$, we also have $h \circ g\circ  a_{AM}^0=0$. Lastly,  any other choice of $a_{AM}$ factors through $a_{AM}^0$ (by Proposition \ref{phop}(\ref{itp7}); cf. the proof of assertion I.\ref{indep}); this gives the "moreover" part of our assertion.

\ref{icoprcl}. This is an easy consequence of the previous assertion. Note firstly that Lemma \ref{lper}(\ref{il2}) (along with the dual to Proposition \ref{pcgt}(I.3)) 
  easily implies that for any choices of $\cp$-approximations $AM_i\to M_i$ their coproduct is a  $\cp$-approximation of $\coprod M_i$.
 The assertion follows easily since the coproduct of any choices of $L_sAM_i\to AM_i$ of $s$-decomposition morphisms is an  $s$-decomposition morphism as well (according to Proposition \ref{phop}(\ref{itp4})); thus it remains to apply assertion I.\ref{icontraf}.

\ref{ilscp}. 
 Assertion I.\ref{indep}  implies that any $\lscp$-null morphism belongs to $J$. The converse implication is immediate from $\lscp\subset \cp$.

\ref{ilscper}. This is an obvious combination of the previous two assertions.

\ref{ilscperw}. Since $\lo$ contains $\lscp$, it also contains its big hull (see Lemma \ref{lbes}(\ref{iseses}, \ref{isesperp})). Thus it suffices to verify the converse inclusion. 

Now, since $\cp$ is essentially small,  perfect, and $\cp= \cp[1]$, the big hull of $\cp$ along with $\cp^\perp$ give a (weighted) torsion theory according to Theorem \ref{tpgws}(1). 
   Since $\cp$ generates $\cu$ as its own localizing subcategory, 
$\cp^\perp=\ns$ (see Proposition 8.4.1 of \cite{neebook}); 
 thus any object of $\cu$ belongs to the big hull of $\cp$.  

Now let $P$ belong $\lo$. As we have just proved, it is a retract of some $Y$ that belongs to the naive big hull of $\cp$. We present $Y$ as $\hinli Y_i$ so that $Y_0$ and cones $Z_{i+1}$ of the connecting morphisms $f_i$ belong to 
$\cocp$. 
 Hence  we can choose $L_sZ_{i+1}$ to belong to $ {\underline{\coprod} \lscp}$; we fix these choices.

Applying Proposition \ref{pbw}(\ref{iwdext}) inductively we obtain that for all $i\ge 0$ there exist  connecting morphisms $l_i:L_sY_i\to L_sY_{i+1}$ between certain choices of $L_sY_*$ such that such that the corresponding squares commute and $\co(l_i)\cong  L_sZ_{i+1}$.

Now we consider the commutative square $$ \begin{CD}
 \coprod L_sY_i@>{L_sa}>>\coprod L_sY_i\\
@VV{\coprod a_{Y_i}}V@VV{\coprod a_{Y_i}}V \\
\coprod Y_i @>{a}>>\coprod Y_i
\end{CD}$$
where $a$ is the morphism $\oplus \id_{Y_i}\bigoplus \oplus (-f_i)$ (cf. Definition \ref{dcoulim}) and
 $L_sa=\oplus \id_{L_sY_i}\bigoplus \oplus (-l_i): \coprod L_sY_i\to \coprod L_sY_i$ is the morphism corresponding to $\hinli L_sY_i$. 
 According to Proposition 1.1.11 of \cite{bbd}, we can complete it to a commutative diagram 
\begin{equation}\label{ely}\begin{CD}
\coprod L_sY_i @>{L_sa}>>\coprod L_sY_i@>{}>> L_sY@ >{}>>\coprod L_sY_i[1] \\
 @VV{\coprod a_{Y_i}}V@VV{\coprod a_{Y_i}}V @VV{}V@VV{\coprod a_{Y_i}[1]}V\\
\coprod Y_i @>{a}>>\coprod Y_i@>{}>> Y@>{}>> \coprod Y_i[1]\\
@VV{}V @VV{}V @VV{}V@VV{}V \\
\coprod R_sY_i @>{}>>\coprod R_sY_i@>{}>> R_sY @>{}>> \coprod R_sY_i[1] \\
\end{CD}\end{equation}
whose rows and columns are distinguished triangles. Then $L_sY$ is a homotopy colimit of $LY_i$ (with respect  to $l_i$) by definition; thus it belongs to the naive big hull of $\lscp$.  
Next,  the 
bottom row  of (\ref{ely}) gives  $R_sY\in \ro$ (since 
$\coprod RY_i\in  \ro$). 
Thus the 
 third column of our diagram is an $s$-decomposition  of $Y$. Hence applying Proposition \ref{phop}(\ref{itp7p}) we obtain that $P$ belongs to the  big hull of 
$ \lscp$.


II. Let $f_i\in \cu(N_i,Q_i)$ for $i\in J$ be a set of $\lo'$-null morphisms; for $N=\coprod N_i$, $f=\coprod f_i$, and $P\in \lo'$ we should check that the composition of any $e\in \cu(P,N)$ with $f$ vanishes. The $\be$-smallness of $P$ allows us to assume that $J$ contains less than $\be$ elements.

Next, Remark \ref{rbecomp}(3) gives a factorization of $e$ through the coproduct of some $h_i\in \cu(M_i,N_i)$ with $M_i\in \obj \cupr$. We choose some $s'$-decompositions $L_i\to M_i\to R_i\to L_i[1]$ of $M_i$. Our assumptions easily imply that $\coprod L_i\to \coprod M_i\to \coprod R_i$ is an $s'$-decomposition of $\coprod M_i$ (cf. Proposition \ref{phop}(\ref{itp4})). Hence part  \ref{itp7} of the proposition implies that $e$ factors through the coproduct $g$ of the corresponding morphisms $L_i\to N_i$. Now, since $f_i$ are $\lo'$-null and $L_i\in \lo'$ then $f\circ g=0$; hence $f\circ e=0$ as well.

Lastly, if $s'$ is weighted then $\lo'$ contains $\lo'[-1]$. 
  Since $\lo'$ is also essentially small it remains to apply Theorem \ref{tpgws}(1,3). 

III. 
For a regular cardinal $\be\ge \al$ we take $\cp=  \obj \cu^\be$. This is clearly a perfect essentially small class that generates  $\cu$ as its own localizing subcategory; 
 we also have $\cp= \cp[1]$. 

 To prove assertion III.1 it suffices to note that $\lo\cap \obj \cu^\be$ is a possible choice of $L_s\cp$ (in the notation of assertion I) and apply assertion I.\ref{ilscper}. 

Next, assertion I.\ref{ilscperw} 
 implies that to prove assertion III.2 it suffices to verify that $s$ restricts to $\cu^{\be}$ for all large enough regular $\be\ge \al$.

Now we choose some $L_sM$ for all $M\in \obj \cu^\al$, and take   a regular cardinal $\al'$ such that all elements of $L_s\cp$ belong to $\cu^{\al'}$ (see Remark \ref{rbecomp}(2)). Then for any regular $\be\ge {\al'}$ the torsion theory  $s$ restricts to $\cu^{\be}$, since the corresponding weight decompositions exist according to Proposition 2.3.4(3) of \cite{bwcp}.
\end{proof}

\begin{rema}\label{rtkrau}
1. Our theorem suggests that it makes sense to define (at least) two distinct notions of $\be$-well generatedness for smashing torsion theories and weight structures  in an $\al$-well generated category $\cu$. One may say that $s$ is {\it weakly $\be$-well generated} for some regular $\be\ge \al$ if it is generated by a perfect set 
 of $\be$-small objects.
$s$ is {\it strongly  $\be$-well generated} if in addition to this condition, $s$ restricts to $\cu^\be$.

Clearly, compactly generated torsion theories (see Definition \ref{dhop}(\ref{ittg})) 
 are precisely the weakly $\alz$-well generated ones (since any set of compact objects is perfect; see Lemma \ref{lper}(\ref{il7})). Hence  our two notions of $\be$-well generatedness are not equivalent (already) in the case $\al=\be=\alz$; this claim follows from \cite[Theorems  4.15, 5.5]{postov} (cf. also Corollary 5.6 of ibid.) where (both weakly and strongly $\alz$-well generated) weight structures on $\cu=D(\modd-R)$ were considered in detail.

Moreover, for $k$ being a field of cardinality $\gamma$ the main subject of \cite{bgn} gives the following example: the opposite (see Proposition \ref{pbw}(\ref{idual})) to (any version of) the Gersten weight structure over $k$ (on the category $\cu$ that is opposite  to the corresponding category of {\it motivic pro-spectra}; note that $\cu$ is compactly generated) is 
weakly  $\alz$-well generated (by definition) and it does not restrict to the subcategory of $\be$-compact objects for any $\be\le \gamma$. On the other hand, this example is "as bad is possible" 
  for weakly  $\alz$-well generated weight structures in the following sense: combining the arguments used the proof of part III.2 of our theorem with that for Theorem \ref{tclass} one can easily verify  that any $\alz$-well generated weight structure is $\al$-well generated whenever the set of  (all) isomorphism classes of morphisms 
in the subcategory $\cu^\alz$  of compact objects of $\cu$ is of cardinality less than $\al$. 

Note also that general strongly $\alz$-well generated weight structures were treated in detail in \S3.3 of \cite{bvtr}. 

2. Obviously the join (see Remark \ref{revenmorews} and Corollary \ref{cwftw}(2)) of any set of  weakly $\be$-well generated weight structures  is    weakly $\be$-well generated; thus we obtain a filtration (respected by joins) on the "join monoid" of weight structures. 
 The natural analogue of this fact  for strongly $\be$-well generated weight structures is probably wrong. Indeed, it is rather difficult to believe that for a general compactly generated category $\cu$ the class of weight structures on the subcategory $\cu^{\alz}$ 
 would be closed with respect to joins; note that  joining compactly generated weight structures $w_i$ on $\cu$ corresponds to intersecting the classes $\cu_{w_i\ge 0}\cap \obj \cu^{\alz}$.

On the other hand, Corollary 4.7 of \cite{krause}  suggests that the filtration of   the class of smashing weight structures  by the sets of weakly $\be$-well generated ones (for $\be$ running through regular cardinals) may be "quite short".  

3. According to part III.2 of our theorem, any weight structure on a well generated $\cu$ is strongly $\be$-well generated for $\be$ being large enough.  Combining this part of the theorem with its part II 
 we also obtain a bijection between strongly  $\be$-well generated weight structures on $\cu$ and $\be$-coproductive weight structures on $\cu^\be$. 
Note that (even) the restrictions of these results to compactly generated categories appear to be quite interesting. 

4. For $\cu$ as above and 
 a weakly $\be$-well generated weight structure $w$ on it one can easily establish a natural weight structure analogue of \cite[Theorem B]{krauwg} that will 
"estimate the size" of an element $M$ of $\cu_{w\le 0}$ in terms of the cardinalities of $\cu(P,M)$ for $P$ running through $\be$-compact elements of  $\cu_{w\le 0}$ (modifying the proof of loc. cit. that is closely related to our proof of  Theorem \ref{tpgws}). Moreover, this result should generalize loc. cit. Note also that there is a "uniform" estimate of this sort that only depends on $\cu$ (and does not depend on $w$).
 This argument should also yield that a weakly $\be$-well generated weight structure is always strongly $\be'$-well generated for a regular cardinal $\be'$ that can be described explicitly.

Moreover, similar arguments can possibly yield that any smashing weight structure on a perfectly generated triangulated category $\cu$ is perfectly generated (cf. Theorem \ref{twgws}(III.2)).

5. Our understanding of "general" well generated torsion theories is much worse than the one of (well generated) weight structures. In particular, the author does not know which properties of weight structures proved in this section can be carried over to $t$-structures.
\end{rema}

\appendix
\section{
 More 
  adjacent weight and $t$-structures}\label{sdeg}

To generalize Theorem \ref{tsymt}(II) and discuss the conditions of the corresponding result 
we 
  need some of the theory of {\it Bousfield localizations} (in the 
   terminology of \cite[\S9]{neebook}; however, we take some  notation from \cite{versga45} and also cite \cite{krauloc}). 

\begin{llem}\label{lwdeg}
  Assume that  there exists an exact right adjoint $i^*$ to a strictly full exact embedding $i:\cu'\to \cu$. 
   Denote by $\du$ the triangulated subcategory of $\cu'$-local objects in $\cu$ (see Lemma \ref{lcloc}).

  \begin{enumerate}
  
  
    \item\label{ibld}  
    $\perpp\obj \du=\cu'$ and 
  there exists a left adjoint  
   ${}^*Q\perpp$ to the localization $Q\perpp:\cu\to \eu=\cu/\du$.

    \item\label{ile3} 
      The functor  ${}^*Q\perpp$ is fully faithful, and the  composition $i\circ i^*$ is isomorphic to  ${}^*Q\perpp \circ  Q\perpp$.
     
      \item\label{ile4} Denote by  $\du'$ the full subcategory of 
       $\du$-local objects of $\cu$, that is,  
  $\obj \du'=\obj \du\perpp$. Then the restriction of the obvious transformation of bi-functors $\cu(-,-)\to \eu(Q\perpp(-),Q\perpp(-))$ to $\cu\opp\times \du'$ is an isomorphism. 
  Consequently, the restriction of $i^*$ to $\du'$ is fully faithful.
     
     \end{enumerate}

  \end{llem}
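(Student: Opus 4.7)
The plan is to run the standard Bousfield--Verdier localization machinery associated with the reflection $i\colon \cu'\hookrightarrow \cu$ with right adjoint $i^*$, and then descend to $\eu=\cu/\du$. First, for each $X\in\obj\cu$ I would complete the counit $\epsilon_X\colon i(i^*X)\to X$ to a distinguished triangle $i(i^*X)\to X\to Y_X\to i(i^*X)[1]$. Applying $\cu(iA,-)$ for $A\in\obj\cu'$ and using the adjunction isomorphisms $\cu(iA,i(i^*X))\cong \cu'(A,i^*X)\cong \cu(iA,X)$ (both for $\epsilon_X$ and for $\epsilon_X[-1]$) shows $\cu(iA,Y_X)=0$, so $Y_X\in\obj\du$. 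If moreover $X\in \perpp\obj\du$, then $\cu(X,Y_X)=0$; combined with $\cu(Y_X,i(i^*X)[n])\cong\cu'(i^*Y_X,i^*X[n])=0$ for all $n$ (since $i^*Y_X=0$: any $B\in\obj\cu'$ satisfies $\cu'(B,i^*Y_X)=\cu(iB,Y_X)=0$), applying $\cu(-,Y_X)$ to the same triangle yields $\cu(Y_X,Y_X)=0$. Hence $Y_X=0$, so $X\cong i(i^*X)\in\obj\cu'$, proving $\perpp\obj\du=\obj\cu'$.

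Next I would construct ${}^*Q\perpp$. Applying $Q\perpp$ to the above triangle shows $Q\perpp X\cong Q\perpp\circ i\circ i^*(X)$ for every $X$, so $Q\perpp\circ i\colon \cu'\to \eu$ is essentially surjective. Moreover, since $\obj\cu'=\perpp\obj\du$, the standard Verdier-quotient argument applies: any roof $i(i^*X)\xleftarrow{s}C\to Y$ with $\co(s)\in\obj\du$ splits, because $\cu(i(i^*X),\co(s)[-1])=0$ forces $s$ to be a split epimorphism. This yields a natural isomorphism $\cu(i(i^*X),Y)\cong\eu(Q\perpp i(i^*X),Q\perpp Y)\cong\eu(Q\perpp X,Q\perpp Y)$. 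Hence the assignment ${}^*Q\perpp(Q\perpp X):=i(i^*X)$ descends to a well-defined functor $\eu\to\cu$ that is left adjoint to $Q\perpp$, with counit $\epsilon_X$ and unit at $Q\perpp X$ equal to the isomorphism $Q\perpp X\xrightarrow{\sim}Q\perpp i(i^*X)$; invertibility of this unit shows ${}^*Q\perpp$ is fully faithful, and by construction ${}^*Q\perpp\circ Q\perpp\cong i\circ i^*$, finishing (\ref{ibld}) and (\ref{ile3}).

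For (\ref{ile4}), I would apply $\cu(-,D)$ for $D\in\obj\du'=\obj\du\perpp$ to the triangle $i(i^*X)\to X\to Y_X$; since both $Y_X$ and $Y_X[-1]$ lie in $\obj\du$, the resulting long exact sequence provides a functorial isomorphism $\cu(X,D)\xrightarrow{\sim}\cu(i(i^*X),D)$. Composing with the isomorphism $\cu(i(i^*X),D)\cong\eu(Q\perpp X,Q\perpp D)$ from the previous paragraph gives the natural isomorphism on $\cu\opp\times\du'$ asserted in the lemma. The ``consequently'' clause then follows by specializing $X=D'\in\obj\du'$ and invoking the adjunction $\cu(i(i^*D'),D)\cong\cu'(i^*D',i^*D)$.

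The hard part will be the middle paragraph: verifying that ${}^*Q\perpp$ is actually a well-defined functor on the quotient and genuinely a left adjoint. This hinges on the Verdier-localization fact that $\cu(X_0,-)\xrightarrow{\sim}\eu(Q\perpp X_0,Q\perpp(-))$ whenever $X_0\in\perpp\obj\du$, which in turn relies on the splitting-of-roofs observation above, and it is precisely at this step that the reflective structure supplied by $i\dashv i^*$ is essentially used.
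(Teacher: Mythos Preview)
Your overall strategy is sound and essentially reproduces the standard Bousfield--Verdier argument that the paper merely cites (Krause's \cite[Proposition 4.9.1]{krauloc}, Verdier's \cite[Proposition I.2.5.3]{versga45}, and Neeman's \cite[Lemma 9.1.5]{neebook}). The paper gives no details at all; you supply them. So the two ``proofs'' are not really different approaches---yours is simply a self-contained unpacking of the references.

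There is, however, one genuine slip in your argument for $\perpp\obj\du\subset\obj\cu'$. After establishing $Y_X\in\obj\du$ and $\cu(X,Y_X)=0$, you write
\[
\cu(Y_X,i(i^*X)[n])\cong\cu'(i^*Y_X,i^*X[n])=0,
\]
and then say you ``apply $\cu(-,Y_X)$ to the same triangle''. Two problems: first, the adjunction $i\dashv i^*$ gives $\cu(iA,B)\cong\cu'(A,i^*B)$, not $\cu(B,iA)\cong\cu'(i^*B,A)$, so the displayed isomorphism is unjustified. Second, if you really apply $\cu(-,Y_X)$, the term flanking $\cu(Y_X,Y_X)$ on the other side from $\cu(X,Y_X)$ is $\cu(i(i^*X)[1],Y_X)$, not $\cu(Y_X,i(i^*X)[n])$. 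The fix is immediate: $\cu(i(i^*X)[1],Y_X)=0$ holds directly because $i(i^*X)[1]\in\obj\cu'$ and $Y_X\in\obj\du=(\obj\cu')^\perp$, something you already established in the previous sentence. Your detour through $i^*Y_X=0$ is unnecessary (and, as written, incorrect). With this one-line correction the first paragraph goes through, and the remaining paragraphs are fine as written.
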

\begin{proof}

  Assertion \ref{ibld} easily follows from Proposition 4.9.1 of  \cite{krauloc}; see conditions (2) and (6) in it.
  
   \ref{ile3}. 
   Assertion \ref{ibld} allows to apply to loc. cit. to the embedding $i\perpp{}\opp:\du\opp\to \cu\opp$. Consequently, 
     the assertion easily follows from loc. cit., 
     see condition (1) in loc. cit. and Corollaries  2.4.2 and  2.5.3 of ibid.\footnote{Alternatively, one may apply Proposition I.2.6.7 of \cite{versga45} to obtain the second half of the assertion.}

   
   \ref{ile4}.  
   The first part of the assertion immediately follows from 
     Proposition I.2.5.3 of \cite{versga45} 
     (and it is essentially Lemma  9.1.5 of \cite{neebook}). 
   Combining this statement with the previous assertion we deduce the full embedding statement easily. 
  
\end{proof}

\begin{rrema}
Note also that the localization $\eu=\cu/\du$ 
is a locally small category; see Theorem 9.1.16 and Remark 9.1.17 of \cite{neebook} or condition (5) in   \cite[Proposition 4.9.1]{krauloc}. \end{rrema}

Now we generalize Theorem \ref{tsymt}(II). 

\begin{theore}\label{tpsymt} 
Assume that $\cu$ is both smashing and cosmashing, $\cu'$ is its localizing subcategory generated by some class $\cp\subset \obj \cu$, and there exists a right adjoint $i^*$ to the embedding $i:\cu'\to \cu$. Suppose also that $\cp$ is {\it weakly symmetric} to some {\bf set} of objects of $\cu$, that is, $\cp\perpp=\perpp \cp'$, and $\cp'$ is perfect in the category $\cu\opp$.  

Then the following statements are valid.

1. $\cp$ is weakly symmetric to the set $i^*(\cp')$ in $\cu'$. Moreover,  
 $i^*(\cp')$ is perfect in the category $\cupr{}\opp$ and generates it as the localizing subcategory of $\cu\opp$; consequently, $\cupr{}\opp$ satisfies the Brown representability condition. 
 
 2. Take $\du$ and $\du'$ as in Lemma \ref{lwdeg}, that is, $\obj \du=\obj \cu'{}\perpp$ and  $\obj \du'=\obj \du\perpp$.
  Then both 
   $\du$ and $\du'$ are triangulated subcategories of $\cu$, and 
  $i^*$ restricts to an equivalence of $\du'$  to $\cu'$. 
   Furthermore, the set $\cp'$
  generates $\du'{}^{op}$ as the localizing subcategory of $\cu\opp$, and  the  embedding $i':\du'\to \cu$ 
  possesses an exact left adjoint. 

3. $\cp$ generates some $t$-structure $t$ on $\cu$ and 
 there exists a cosmashing  weight structure $w$  that is right adjacent to $t$. Moreover, the opposite weight structure $w\opp$ on $\cu\opp$ (see Proposition \ref{pbw}(\ref{idual})) is generated by $\cp'$.  

4. $\hrt$  has 
an injective cogenerator, and 
 the functor $H^t$ restricts to an equivalence of $\hw$ 
  with the subcategory of  injective objects of $\hrt$.
\end{theore}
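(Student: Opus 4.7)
The strategy is to generalize Theorem \ref{tsymt}(II) by replacing Brown--Comenetz duals of compact objects with the abstract perfect set $\cp'$, applying Theorem \ref{tpgws} directly in $\cu\opp$ in place of the compact-generation arguments. The technical heart is the Bousfield-localization identification $\cu'\simeq \du'$ in part 2, which transfers statements between the ``$\cp$-side'' and the ``$\cp'$-side''; once this is in place, parts 3 and 4 follow the scheme of Theorem \ref{tsymt}(II) essentially verbatim.

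For part 1, weak symmetry of $\cp$ to $i^*(\cp')$ in $\cu'$ is immediate from $\cu'(X,i^*(Q))\cong \cu(X,Q)$ for $X\in \cu'$ and $Q\in \cp'$. For perfectness of $i^*(\cp')$ in $\cu'\opp$, note first that $\cu'$ is cosmashing with products computed by applying $i^*$ to $\cu$-products. Given a family $\{h_\alpha\}$ of $i^*(\cp')$-conull morphisms in $\cu'$, adjunction gives that each $ih_\alpha$ is $\cp'$-conull in $\cu$; by perfectness of $\cp'$ in $\cu\opp$, so is $\prod_\cu ih_\alpha$. The cofibre of the counit $ii^*(\prod_\cu ih_\alpha)\to \prod_\cu ih_\alpha$ lies in $\du=\cp\perpp=\perpp\cp'$, so the corresponding long exact sequence of $\cu(-,Q)$-groups for $Q\in \cp'$ forces $\cu(i\prod_{\cu'}h_\alpha, Q)=0$, i.e.\ $\prod_{\cu'}h_\alpha$ is $i^*(\cp')$-conull. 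Brown representability for $\cu'\opp$ then follows from Lemma \ref{lper}(\ref{il4}) applied in $\cu'\opp$, once the generation statement from part 2 is in hand.

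For part 2, observe that $\cp'\subset \du'$ (from $\obj\du=\perpp\cp'$ by weak symmetry) and $\cu'\subset \du'$ (since $\cu'\perp \du$ by definition of $\du$). Combining Lemma \ref{lwdeg}(\ref{ibld}--\ref{ile4}), the natural isomorphism $ii^*\cong {}^*Q\perpp\circ Q\perpp$ shows that $ii^*$ factors through $\du'$ (the essential image of the fully faithful ${}^*Q\perpp$), and $Q\perpp$ identifies both $\cu'$ and $\du'$ with $\eu=\cu/\du$; restricting $i^*$ to $\du'$ then yields the equivalence $\du'\xrightarrow{\sim}\cu'$, with quasi-inverse the inclusion $\cu'\subset \du'$. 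Transferring the generation of $\cu'$ by $\cp$ across this equivalence and dualizing via weak symmetry gives that $\cp'$ generates $\du'\opp$ as a localizing subcategory of $\cu\opp$, and the exact left adjoint to the embedding $\du'\hookrightarrow \cu$ is supplied by the composite ${}^*Q\perpp\circ Q\perpp$ regarded as a functor $\cu\to \du'$.

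For parts 3 and 4, apply Theorem \ref{tpgws}(1,3) to the perfect set $\cp'$ in the smashing category $\cu\opp$ (smashing because $\cu$ is cosmashing) to produce a smashing weight structure $w\opp$ on $\cu\opp$ generated by $\cp'$; its opposite $w$ on $\cu$ (Proposition \ref{pbw}(\ref{idual})) is cosmashing. A direct shift computation, using the weak symmetry $\cp^\perp=\perpp\cp'$ and its shift-stable consequences, shows that $\cu_{w\le 0}=\cap_{i\ge 1}\cp[i]\perpp$; Proposition 4.4(4) of \cite{bvt} (used exactly as in Theorem \ref{tsymt}(II.2)) then furnishes the $t$-structure $t$ generated by $\cp$ on $\cu$ and verifies that $w$ is right adjacent to $t$. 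Finally, Proposition \ref{pbw}(\ref{igenws}) applied to $w\opp$ on $\cu\opp$ produces an object $I_w\in \cu_{w=0}$ of which every object of $\cu_{w=0}$ is a $\cu$-retract of a product of copies, and Proposition \ref{pbw}(\ref{injadj}) then simultaneously gives the equivalence of $\hw$ with the subcategory of injectives of $\hrt$ and identifies $H^t(I_w)$ as an injective cogenerator. The main obstacle is part 2, where one must keep the various Bousfield-localization identifications consistent, and the shift- and perpendicular-side bookkeeping in the passage $\cu\leftrightarrow \cu\opp$ that pervades part 3.
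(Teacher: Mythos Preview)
Your proposal has two genuine gaps, both at the points you yourself flag as delicate.

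\textbf{Part 2.} Your claim that $\cu'\subset\du'$ is unjustified and in general false: $\cu'={}^\perp\obj\du$ and $\du'=\obj\du^\perp$ are the \emph{left} and \emph{right} orthogonals of $\du$, and these differ whenever we are in a genuine recollement. Relatedly, the essential image of the fully faithful left adjoint ${}^*Q^\perp$ is $\cu'$, not $\du'$ (this is exactly Lemma \ref{lwdeg}(\ref{ile3}), since $ii^*\cong{}^*Q^\perp\circ Q^\perp$ lands in $\cu'$). Consequently your proposed quasi-inverse ``inclusion $\cu'\subset\du'$'' and your candidate left adjoint ${}^*Q^\perp\circ Q^\perp$ for $i':\du'\to\cu$ both land in the wrong subcategory. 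The paper instead uses Lemma \ref{lwdeg}(\ref{ile4}) to get that $i^*|_{\du'}$ is fully faithful, observes it respects products, and then uses the generation of $\cu'{}^{op}$ by $i^*(\cp')$ established in part 1 to deduce essential surjectivity; the left adjoint to $i'$ comes from Lemma \ref{lper}(\ref{il4}) applied to the perfect set $\cp'$ in $\cu^{op}$ once one knows $\du'{}^{op}$ is the localizing subcategory it generates.

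\textbf{Part 3.} You have not produced the $t$-structure $t$. In Theorem \ref{tsymt}(II.2) its existence came from Proposition \ref{pcgt}(IV), which requires $\cp$ to consist of compact objects; that hypothesis is absent here, and Proposition 4.4(4) of \cite{bvt} only gives adjacency once both $t$ and $w$ are already in hand. Your identification $\cu_{w\le 0}=\cap_{i\ge 1}(\cp[i])^\perp$ is correct, but this does not by itself show that $(\cu_{w\le 0},{}^\perp\cu_{w\le -1})$ admits $t$-decompositions. The paper circumvents this by first applying Proposition \ref{pbw}(\ref{iexadj}) inside $\cu'{}^{op}$ (where Brown representability holds by part 1) to the auxiliary weight structure $w'{}^{op}$ generated by $i^*(\cp')$, obtaining $t'$ on $\cu'$, and then extending to $t$ on $\cu$ via Proposition \ref{prtst}(\ref{itdeg}) using the right adjoint $i^*$. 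Your attempt to work directly on $\cu$ would require Brown representability for $\cu^{op}$, which is not among the hypotheses.

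Your approach to part 1 is correct though more circuitous than the paper's (which simply notes that $i^*$ respects products and is the identity on $\cu'$, so the $\cp'$-conull class restricts well); and your part 4 matches the paper's use of Proposition \ref{pbw}(\ref{injadj},\ref{igenws}) and Theorem \ref{tpgws}(3).
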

\begin{proof}
1. Since 
  $\cp'$ is perfect in  category $\cu\opp$, the morphism class $\cpcn$ (see Definition \ref{dsym}(\ref{isym})) is closed with respect to $\cu$-products. Next, the functor $i^*$ respects products since it is  right adjoint to $i$. Moreover,  $i^*$ is essentially identical on $\cu'$; thus $\cu'$ is cosmashing (both in $\cu$ and as a triangulated category) and  
 the class $i^*(\cp')-\mathbf{conull}_{\cu'}  $ is closed with respect to $\cu'$-products. Hence $i^*(\cp')$ is perfect in the category $\cupr{}\opp$ indeed.
 
  Furthermore, the adjunction of $i$ to $i^*$ clearly implies that   $\cp$ is $\cu'$-weakly symmetric to $i^*(\cp')$. 
 Moreover, the class $(\cup_{i\in \z} \cp[i])^{\perp_{\cu'}}$ is zero by Lemma \ref{lper}(\ref{il4}); hence $(\cup_{i\in \z} i^*(\cp')[i])^{\perp_{\cu'{}\opp}}=\ns$ as well.
 Therefore Lemma \ref{lper}(\ref{il4})  also implies that  $i^*(\cp')$ generates $\cu'{}\opp$ as its own localizing subcategory, and this gives the Brown representability condition for $\cupr{}\opp$.


2. 
 Lemma \ref{lcloc} implies that  $\du$ and $\du'$ are triangulated subcategories of $\cu$. 
Moreover, 
 $\du'$ is a cosmashing subcategory of $\cu$;  hence 
 $i'$ respects products.  

Now we study  the restriction $i^*_{\du'}$  of $i^*$ to $\du'$. $i^*_{\du'}$  respects $\du'$-products 
 and it is fully faithful by Lemma \ref{lwdeg}(\ref{ile4}). Since  
   $\du'$ contains $\cp'$ and  $i^*_{\du'}$  sends $\du'{}\opp$ into a category generated by $i^*(\cp')$ as its own localizing subcategory,
  $\du'^{op}$   is perfectly generated by $\cp'$ indeed. Applying Lemma \ref{lper}(\ref{il4}) to the set $\cp'$ of objects of $\cu\opp$ we also obtain that $i'$ possesses an exact left adjoint.

3. We argue similarly to the proof of  Theorem \ref{tsymt}(II). 
Denote by $w^{op}$ (resp. $w'{}\opp$) the weight structure 
  on 
   $\cu\opp$ generated by  $\cp'$  (resp. by the weight structure generated by $i^*(\cp)$ in $\cu'$); see 
    Theorem \ref{tpgws}.
The weight structures $w\opp$ and $w'{}\opp$  are smashing according to Theorem \ref{tpgws}(3).  Hence the corresponding (opposite) weight structure $w$ on $\cu$ 
  is cosmashing and  the class $\cu_{w=0}$ is cosmashing in $\cu$. 

Moreover, by Proposition \ref{pbw}(\ref{iexadj}) there exists a $t$-structure $t'{}\opp$ on $\cu'{}\opp$ that is right adjacent to $w'{}\opp$. We set $t'$ to be the corresponding $t$-structure on $\cu'$; see Remark \ref{rtst}(1). Then $$\cu'_{t'\le 0}=\cu'_{w'{}\opp\ge 0}={}^{\perp_{\cu'}}(\cup_{i>0}i^*(\cp')[i])=(\cup_{i>0}\cp[i])^{\perp_{\cu'}};$$
 see Definition \ref{dwso}(\ref{idadj}). 
  Thus $t'$ is generated by $\cp$. Applying Proposition \ref{prtst}(\ref{itdeg}) we obtain the existence of a $t$-structure $t$ on $\cu$ that is generated by $\cp$ (as well). $t$ is left adjacent to $w$ by Proposition 4.4(3) of \cite{bvt}  (along with 
    Proposition 3.2(1,2) of ibid.).  

   4.  
   Proposition \ref{pbw}(\ref{injadj},\ref{igenws}) implies that  $H^t$ gives an equivalence of  $\hw$ 
    with the subcategory of injective objects of $\hrt$.  Moreover, this proposition also says that $\hrt$ has enough injectives,
    whereas Theorem \ref{tpgws}(3) gives  the  existence of  $I_w\in \cu_{w=0}$  such that any 
element of  $\cu_{w=0}$ is a retract of a product of copies of $I_w$. 
   \end{proof}

\begin{rrema}
1. Let us now relate the theorem above to Theorem \ref{tsymt}(II) and other statements related to this subject.

Note that $\cp$ is weakly symmetric to $\cp'$ if it is symmetric to it; see 
 Lemma \ref{lper}(\ref{il1}). Thus if $\cu$ and smashing and cosmashing and $\cp$ and $\cp'$ are symmetric sets in it then all the assumptions of Theorem \ref{tpsymt} are fulfilled; see 
 Theorem \ref{tsymt}(I.1) and Lemma \ref{lper}(\ref{il4}). 
  Hence our theorem essentially implies Theorem 4.3.8 of \cite{bpure}.\footnote{Actually,  in loc. cit. it is not assumed that $\cp$ is a set. However, it appears that to make the proof work one has either to add this condition or just suppose that the (adjoint) functor $i^*:\cu\to \cu'$ exists.}
Consequently, our theorem generalizes Theorem 3.6(1) of \cite{modoiwcc}.


We recall that the argument of  \cite{modoiwcc} required   $\cu$ to be a "strong stable derivator" triangulated category (cf. the footnote to Theorem \ref{tgroth} above), whereas the elements of $\cp$ (if we use our notation) were required to be {\it weakly  compact} in a certain sense
 (see \S1.5 of ibid.). Note however that these two extra assumptions do not appear to be really restrictive, and the arguments of ibid. are quite diffierent from our ones.

2. 
 Combining the aforementioned  Proposition 4.9.1 of  \cite{krauloc} with Theorem \ref{tpsymt}(1,2) we obtain that   Bousfield localisation 
 functors exist both for the pair  $\du\subset \cu$ and for $\du\opp\subset \cu\opp$ (under the assumptions of our theorem); see Definitions 9.1.1 and Definitions 9.2.1 of \cite{neebook}. 
Thus we are in the {\it situation of the six gluing functors} (see loc. cit.);  alternative terms are {\it gluing datum}  and {\it recollement} (cf. \S1.4.3 and Remark 1.4.8 of \cite{bbd}).  

Now let us make a few observations related to this notion. If an exact embedding $i\perpp:\du\to \cu$ is a part of a gluing datum, $\cu'$ and $\du'$ are the (triangulated) subcategories of $\cu$ whose object classes equal $\perpp \obj \du$ and $ \obj \du\perpp$, respectively, then there exists a right adjoint $i^*$ to the embedding $i:\cu'\to \du$ 
 and $i^*$ restricts to an equivalence $i^*_{\du'}:\du'\to \cu'$ (see \S1.4.6(b) of ibid.). Moreover, 
  Lemma \ref{lwdeg} easily implies the following: if a class $\cp$ is (weakly) symmetric to 
  some $\cp''\subset \obj \cu'$ in $\cu'$ then  $\cp$ (weakly) symmetric to the essentially small class $\cp'$ in $\cu$, where  $\cp'=i^*_{\du'}{}\ob(\cp')$. 

Consequently, one may apply some sort of Brown-Comenetz dualily (see \S\ref{sadjts}) to find some symmetric sets in $\cu'$ and "extend them" to $\cu$ as above whenever $\cu'$ is a smashing subcategory of $\cu$ that satisfies the Brown representability condition 
and the embedding $i:\cu'\to \cu$ extends to a gluing datum. This is the most general method of    constructing symmetric sets in smashing triangulated categories 
 currently known to the author. 
\end{rrema}

\end{document}